\newcommand{\N}{\mathds{N}}
\newcommand{\Z}{\mathds{Z}}
\newcommand{\Q}{\mathds{Q}}
\newcommand{\R}{\mathds{R}}
\newcommand{\C}{\mathds{C}}
\newcommand{\T}{\mathds{T}}
\theoremstyle{plain}
\newtheorem{theorem}{Theorem}[section]
\newtheorem{corollary}[theorem]{Corollary}
\newtheorem{lemma}[theorem]{Lemma}
\newtheorem{proposition}[theorem]{Proposition}
\theoremstyle{definition}
\newtheorem{definition}[theorem]{Definition}
\newtheorem{notation}[theorem]{Notation}
\theoremstyle{remark}
\newtheorem{case}{Case}[theorem]
\newtheorem{example}[theorem]{Example}
\newtheorem{remark}[theorem]{Remark}
\numberwithin{equation}{section}
\begin{document}
\title[Noncommutative Solenoids]{Noncommutative Solenoids}
\author{Fr\'{e}d\'{e}ric Latr\'{e}moli\`{e}re}
\email{frederic@math.du.edu}
\urladdr{http://www.math.du.edu/\symbol{126}frederic}
\address{Department of Mathematics \\ University of Denver \\ Denver CO 80208}

\author{Judith Packer}
\email{packer@euclid.colorado.edu}
\urladdr{http://spot.colorado.edu/\symbol{126}packer}
\address{Department of Mathematics \\ University of Colorado \\ Boulder CO 80309}
\date{\today}
\subjclass[2000]{Primary:  46L05, 46L80 Secondary: 46L35}
\keywords{Twisted group C*-algebras, solenoids, $N$-adic rationals, $N$-adic integers, rotation C*-algebras, $K$-theory, *-isomorphisms.}

\begin{abstract}
A noncommutative solenoid is the C*-algebra $C^\ast(\Q_N^2,\sigma)$ where $\Q_N$ is the group of the $N$-adic rationals twisted and $\sigma$ is a multiplier of $\Q_N^2$. In this paper, we use techniques from noncommutative topology to classify these C*-algebras up to *-isomorphism in terms of the multipliers of $\Q_N^2$. We also establish a necessary and sufficient condition for simplicity of noncommutative solenoids, compute their K-theory and show that the $K_0$ groups of noncommutative solenoids are given by the extensions of $\Z$ by $\Q_N$. We give a concrete description of non-simple noncommutative solenoids as bundle of matrices over solenoid groups, and we show that irrational noncommutative solenoids are real rank zero $\mathrm{AT}$ C*-algebras.
\end{abstract}
\maketitle


\newcommand{\solenoid}{\mathscr{S}}
\newcommand{\algebra}{\mathscr{A}^{\solenoid}}
\newcommand{\primeseq}{\mathscr{P}}

\section{Introduction}

Since the early 1960's, the specific form of transformation group $C^{\ast}$-algebras given by the action of $\Z$ on the circle generated through a rotation that was an irrational multiple of $2\pi$ has sparked interest in the classification problem for $C^{\ast}$-algebras in particular and the theory of $C^{\ast}$-algebras in general.  When first introduced by Effros and Hahn in \cite{EffrosHahn67}, it was thought that these $C^{\ast}$-algebras had no non-trivial projections.  This was shown not to be the case by M. Rieffel in the late 1970's \cite{Rieffel81}, when he constructed a whole family of projections in these $C^{\ast}$-algebras, and these projections played a key role one of Pimsner's and Voiculescu's methods of classifying these $C^{\ast}$-algebras up to $\ast$-isomorphism, achieved in 1980 (\cite{Pimsner80}) by means of $K$-theory.  Since then these $C^{\ast}$-algebras were placed into the wider class of twisted $\Z^n$-algebras by M. Rieffel in the mid 1980's (\cite{Rieffel88}) and from this point of view were relabeled as \emph{non-commutative tori}.  The $\Z^n$-analogs have played a key role in the non-commutative geometry of A. Connes \cite{Connes80}, and the class of $C^{\ast}$-algebras has been widened to include twisted $C^{\ast}$-algebras associated to arbitrary compactly generated locally compact Abelian groups \cite{Echterhoff95}.  However, up to this point, the study of twisted group $C^{\ast}$-algebras associated to Abelian groups that are not compactly generated has been left somewhat untouched.

There are a variety of reasons for this lack of study, perhaps the foremost being that Abelian groups that cannot be written as products of Lie groups $\R^n$ and finitely generated Abelian groups are much more complicated and best understood by algebraists; furthermore, the study of extensions of such groups can touch on logical conundrums.  One could also make the related point that such groups require more technical algebraic expertise and are of less overall interest in applications than their compactly generated counterparts. On the other hand, it can also be said that discrete Abelian groups that are not finitely generated have begun to appear more frequently in the literature, including in algebra in the study of the two-relation Baumslag-Solitar groups, where they appear as normal Abelian subgroups, in the study of wavelets, where these groups and their duals, the solenoids, have appeared increasingly often in the study of wavelets \cite{DutkayJorgensen06,DutkayJorgensen07,Dutkay06,Baggett10,Baggett11}.  We thus believe it is timely to study the twisted C*-algebras of the groups $\Q_N^2$ where $\Q_N$ is the group of $N$-adic rational numbers for arbitrary natural number $N>1$ and in homage to M. Rieffel, we call such $C^{\ast}$-algebras \emph{non-commutative solenoids}.

In this paper, we present the classification of noncommutative solenoids up to $\ast$-isomorphism using methods from noncommutative topology. They are interesting examples of noncommutative spaces, and in particular, they can be seen as noncommutative orbit spaces for some actions of the $N$-adic rationals on solenoids, some of them minimal. Thus, our classification provides a noncommutative topological approach to the classification of these actions as well. Our work is a first step in the study of the topology of these new noncommutative spaces. Our classification result is based on the computation of the $K$-theory of noncommutative solenoids. We prove that the $K_0$ groups of noncommutative solenoids are exactly the groups given by Abelian extensions of $\Z$ by $\Q_N$, which follows from a careful analysis of such extensions. We relate the class of noncommutative solenoids with the group $\mathrm{Ext}(\Q_N,\Z)$, which is isomorphic to $\Z_N / \Z$ where $\Z_N$ is the additive group of $N$-adic integers \cite{Fuchs70}, and we make explicit the connection between $N$-adic integers and our classification problem. We also partition the class of noncommutative solenoids into three distinct subclasses, based upon their defining twisting bicharacter: rational periodic noncommutative solenoids, which are the nonsimple noncommutative solenoids, and the only ones of of type I, and are fully described as bundles of matrices over a solenoid group; irrational noncommutative solenoids, which we show to be simple and real rank zero $\mathrm{AT}$-algebras in the sense of Elliott; and last rational aperiodic noncommutative solenoids, which give very intriguing examples.

\bigskip We build our work from the following family of groups:

\begin{definition}
Let $N\in\N$ with $N>1$. The group of $N$-adic rationals is the group:
\begin{equation}\label{NadicRationals}
\Q_N = \left\{ \frac{p}{N^k} \in \Q : p\in\Z, k\in\N \right\}
\end{equation}
endowed with the discrete topology.
\end{definition}
An alternative description of the group $\Q_N$ is given as the inductive limit of the sequence: 
\begin{equation}
\begin{CD}\label{Nadic-def}
\Z @>z\mapsto Nz>> \Z @>z\mapsto Nz>> \Z @>z\mapsto Nz>> \Z @>z\mapsto Nz>> \cdots
\end{CD}
\end{equation}
From this latter description, we obtain the following result. We denote by $\T$ the unit circle $\{ z \in \C : |z|=1\}$ in the field $\C$ of complex numbers.
\begin{proposition}\label{solenoids}
Let $N\in\N$ with $N>1$. The Pontryagin dual of the group $\Q_N$ is the $N$-solenoid group, given by:
\[
\solenoid_N = \left\{ (z_n)_{n\in\N} \in \T^{\N}: \forall n \in \N \;\; z_{n+1}^N = z_n \right\}\text{,}
\]
endowed with the induced topology from the injection $\solenoid_N\hookrightarrow \T^{\N}$. The dual pairing between $\Q_N$ and $\solenoid_N$ is given by:
\[
\left< \frac{p}{N^k}, (z_n)_{n\in\N} \right> = z_k^p \text{,}
\]
where $\frac{p}{N^k} \in \Q_N$ and $(z_n)_{n\in\N}\in\solenoid_N$.
\end{proposition}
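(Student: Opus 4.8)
The plan is to exploit the presentation of $\Q_N$ as the inductive limit \eqref{Nadic-def} together with the fact that Pontryagin duality is a contravariant equivalence carrying inductive limits of discrete abelian groups to projective limits of compact abelian groups. Dualizing \eqref{Nadic-def} coordinate by coordinate, each copy of $\Z$ becomes its dual circle $\T$, and the connecting homomorphism $z\mapsto Nz$ dualizes to the $N$-th power map $w\mapsto w^N$ on $\T$: indeed, a character of $\Z$ is determined by its value $w\in\T$ at $1$, and precomposing with $z\mapsto Nz$ sends $n\mapsto w^{Nn}=(w^N)^n$. Hence $\widehat{\Q_N}$ is the projective limit of the system $\cdots\xrightarrow{w\mapsto w^N}\T\xrightarrow{w\mapsto w^N}\T$, whose underlying set is exactly $\{(z_n)_{n\in\N}\in\T^{\N}:z_{n+1}^N=z_n\}=\solenoid_N$, with the projective-limit topology agreeing with the subspace topology from $\T^{\N}$.

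Rather than invoke this functorial statement as a black box, I would give a direct verification. First I would check that $\left<\frac{p}{N^k},(z_n)\right>=z_k^p$ is well defined, the only subtlety being independence of the representative: since $\frac{p}{N^k}=\frac{Np}{N^{k+1}}$ and $z_{k+1}^N=z_k$, one has $z_k^p=(z_{k+1}^N)^p=z_{k+1}^{Np}$, so the value is unchanged. This shows each $(z_n)\in\solenoid_N$ defines a character $\chi_{(z_n)}$ of $\Q_N$, and that $(z_n)\mapsto\chi_{(z_n)}$ is a group homomorphism $\solenoid_N\to\widehat{\Q_N}$.

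Next I would prove this homomorphism is a bijection. For injectivity, if $\chi_{(z_n)}$ is trivial then taking $p=1$ gives $z_k=\left<\frac{1}{N^k},(z_n)\right>=1$ for every $k$, so $(z_n)$ is the identity. For surjectivity, given $\chi\in\widehat{\Q_N}$, I would set $z_k=\chi\!\left(\frac{1}{N^k}\right)$; the identity $z_{k+1}^N=\chi\!\left(\frac{1}{N^{k+1}}\right)^N=\chi\!\left(\frac{N}{N^{k+1}}\right)=\chi\!\left(\frac{1}{N^k}\right)=z_k$ places $(z_n)$ in $\solenoid_N$, and $\chi\!\left(\frac{p}{N^k}\right)=\chi\!\left(\frac{1}{N^k}\right)^p=z_k^p$ shows $\chi=\chi_{(z_n)}$, which simultaneously confirms the claimed pairing formula.

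Finally, the topological identification is where the only real care is needed. The map $(z_n)\mapsto\chi_{(z_n)}$ is continuous for the topology of pointwise convergence on $\widehat{\Q_N}$, which coincides with the compact-open topology because $\Q_N$ is discrete, since each evaluation $\chi\mapsto\chi\!\left(\frac{p}{N^k}\right)=z_k^p$ depends continuously on $(z_n)$ in the subspace topology of $\T^{\N}$. As $\solenoid_N$ is a closed subset of the compact group $\T^{\N}$ it is compact, and $\widehat{\Q_N}$ is Hausdorff, so a continuous bijection between them is automatically a homeomorphism of topological groups. The main obstacle is precisely matching the compact-open topology on $\widehat{\Q_N}$ with the solenoid topology, but this is disposed of cleanly by the compactness of $\solenoid_N$, which itself reflects the discreteness of $\Q_N$.
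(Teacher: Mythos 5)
Your proof is correct and follows the same route as the paper: dualize the inductive limit presentation of $\Q_N$ via the co-functoriality of Pontryagin duality to obtain the projective limit of circles under $N$-th power maps, then identify that limit with $\solenoid_N$ and verify the pairing. The paper compresses the verification into one sentence ("We check that this limit is... the group $\solenoid_N$, and the pairing is easily computed"); your direct check of well-definedness, bijectivity, and the compactness/Hausdorff argument for the homeomorphism supplies exactly the details the paper leaves to the reader.
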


\begin{proof}
The Pontryagin dual of $\Q_N$ is given by taking the projective limit of the sequence:
\begin{equation}
 \label{solenoid-def}
\begin{CD}
\cdots @>z\mapsto z^N>>\T @>z\mapsto z^N>>\T @>z\mapsto z^N>> \T @>z\mapsto z^N>> \T
\end{CD}\text{.}
\end{equation}
using the co-functoriality of Pontryagin duality and Sequence (\ref{Nadic-def}). We check that this limit is (up to a group isomorphism) the group $\solenoid_N$, and the pairing is easily computed.
\end{proof}

Using Proposition (\ref{solenoids}), we start this paper with the computation of the second cohomology group of $\Q_N^2$. We then compute the symmetrizer group for any skew-bicharacter of $\Q_N^2$, as it is the fundamental tool for establishing simplicity of twisted group C*-algebras. The second section of this paper studies the basic structure of quantum solenoids, defined as $C^\ast(\Q_N^2,\sigma)$ for $\sigma\in H^2(\Q_N^2)$. We thus establish conditions for simplicity, and isolate the three subclasses of noncommutative solenoids. We then compute the $K$-theory of noncommutative solenoids and show that they are extensions of $\Z$ by $\Q_N$. We then prove that the $K_0$ groups of noncommutative solenoids are given exactly by \emph{all} possible Abelian extensions of $\Z$ by $\Q_N$. This section presents self-contained computations of the $\Z$-valued $2$-cocycles of $\Q_N$ corresponding to $K_0$ groups of noncommutative solenoids and a careful analysis of $\mathrm{Ext}(\Q_N,\Z)$. We then compute an explicit presentation of rational noncommutative solenoids.

In our last section, we classify all noncommutative solenoids in terms of their defining $\T$-valued $2$-cocycles. Our technique, inspired by the work of \cite{Yin86} on rational rotation C*-algebras, uses noncommutative topological methods, namely our computation of the $K$-theory of noncommutative solenoids. We also connect the theory of Abelian extensions of $\Z$ by $Q_N$ with our *-isomorphism problem.

Our work is a first step in the process of analyzing noncommutative solenoids. Questions abound, including queries about Rieffel-Morita equivalence of noncommutative solenoids and the structure of their category of modules, additional structure theory for aperiodic rational noncommutative solenoids, higher dimensional noncommutative solenoids and to what extent the Connes' noncommutative geometry can be extended to these noncommutative solenoids.

\section{Multipliers of the $N$-adic rationals}

We first compute the second cohomology group of $\Q_N^2$. A \emph{noncommutative solenoid} will mean, for us, a twisted group C*-algebra of $\Q_N\times\Q_N$ for some $N\in\N, N>1$. We shall apply the work of Kleppner \cite{Kleppner65} to determine the group $H^2(\Q_{N}^2)$ for $N\in\N,N>1$. 

\begin{theorem}\label{skew-multipliers}
Let $N\in\N,N>1$. We let:
\[ \Xi_N = \left\{ \left( \nu_n \right) : 
\nu_0 \in [0,1) \,\,\wedge\,\, (\forall n\in \N \,\, \exists k \in \{0,\ldots,N-1\}\;\; N \nu_{n+1} =  \nu_n+k )
\right\}\text{.}
\]
The set $\Xi_N$ is a group for the pointwise modulo-one addition operation. As a group, $\Xi_N$ is isomorphic to $\solenoid_N$. Let $B^{(2)}(\Q_N^2)$ be the group of skew-symmetrized bicharacters defined by:
\[
B^{(2)}(\Q_N^2) = \{ (x,y)\in\Q_N^2 \mapsto \varphi(x,y)\varphi(y,x)^{-1} : \varphi \in B(\Q_N^2) \}
\]
where $B(\Q_N^2)$ is the group of bicharacters of $\Q_N^2$. Then $\varphi \in B^{(2)}$ if and only if there exists $\alpha\in\Xi_N$ such that, for all $p_1,p_2,p_3,p_4\in\Z$ and $k_1,k_2,k_3,k_4 \in \N$, we have
\[
\varphi\left(\left(\frac{p_1}{N^{k_1}},\frac{p_2}{N^{k_2}}\right),\left(\frac{p_3}{N^{k_3}},\frac{p_4}{N^{k_4}}\right)\right) = \exp(2i\pi (\alpha_{(k_1+k_4)}p_1p_4 - \alpha_{(k_2+k_3)}p_2p_3))\text{.}
\]
Moreover, $\alpha$ is uniquely determined by $\varphi$.
\end{theorem}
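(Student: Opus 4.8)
The plan is to reduce everything to an explicit description of the bicharacters of a single copy of $\Q_N$, and then to assemble the bicharacters of $\Q_N^2$ from a $2\times 2$ array of such objects. First I would settle the claims about $\Xi_N$. Sending $(\nu_n)_{n\in\N}$ to $(\exp(2i\pi\nu_n))_{n\in\N}$ lands in $\solenoid_N$ precisely because the relation $N\nu_{n+1}=\nu_n+k$ with $k\in\{0,\dots,N-1\}$ says exactly that $\exp(2i\pi\nu_{n+1})^N=\exp(2i\pi\nu_n)$; conversely every $(z_n)\in\solenoid_N$ has a unique preimage, since each $z_n$ has a unique argument $\nu_n\in[0,1)$ and the requirement $\nu_{n+1}\in[0,1)$ forces the integer $N\nu_{n+1}-\nu_n$ to lie in $\{0,\dots,N-1\}$. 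Checking that pointwise modulo-one addition is well defined on $\Xi_N$ (the defining relations are congruences modulo one, hence preserved) and that the exponential map intertwines it with multiplication on $\solenoid_N$ is then routine, and yields at once the group structure on $\Xi_N$ and the isomorphism $\Xi_N\cong\solenoid_N$.

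The key step is to parametrize $B(\Q_N)$. Bimultiplicativity reduces a bicharacter $\rho$ of $\Q_N$ to the doubly indexed family $a_{j,k}=\rho(1/N^j,1/N^k)$, since $\rho(p/N^j,q/N^k)=a_{j,k}^{pq}$. The identity $1/N^j=N\cdot 1/N^{j+1}$ in $\Q_N$ forces $a_{j,k}=a_{j+1,k}^N=a_{j,k+1}^N$; combining these two relations gives $a_{j+1,k-1}=a_{j+1,k}^N=a_{j,k}$, so $a_{j,k}$ depends only on $j+k$, and the sequence $m\mapsto a_{0,m}$ satisfies the solenoid relation $a_{0,m}=a_{0,m+1}^N$. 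Conceptually this is the statement that $\Q_N\otimes_\Z\Q_N\cong\Q_N$ together with $\widehat{\Q_N}=\solenoid_N$ from Proposition \ref{solenoids}. Writing $a_{0,m}=\exp(2i\pi\alpha_m)$ with $\alpha\in\Xi_N$, I conclude that every bicharacter of $\Q_N$ has the form $\rho_\alpha(p/N^j,q/N^k)=\exp(2i\pi\alpha_{j+k}pq)$ for a unique $\alpha\in\Xi_N$, and conversely each $\alpha\in\Xi_N$ defines such a bicharacter.

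Next I would write a general bicharacter of $\Q_N^2$ as a product over the four pairs of coordinates: with $a=p_1/N^{k_1}$, $b=p_2/N^{k_2}$, $c=p_3/N^{k_3}$, $d=p_4/N^{k_4}$,
\[
\varphi((a,b),(c,d))=\rho_{\alpha^{11}}(a,c)\,\rho_{\alpha^{12}}(a,d)\,\rho_{\alpha^{21}}(b,c)\,\rho_{\alpha^{22}}(b,d),
\]
for some $\alpha^{11},\alpha^{12},\alpha^{21},\alpha^{22}\in\Xi_N$ (this decomposition is exactly the factorization of a bicharacter of a direct sum through its four blocks). Forming the skew-symmetrization $\varphi((a,b),(c,d))\,\varphi((c,d),(a,b))^{-1}$ and collecting the exponents, the diagonal contributions coming from $\alpha^{11}$ and $\alpha^{22}$ cancel identically, while the two off-diagonal contributions combine into the single element $\alpha=\alpha^{12}-\alpha^{21}\in\Xi_N$, producing exactly the asserted formula. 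This shows every element of $B^{(2)}(\Q_N^2)$ has the claimed shape; taking $\alpha^{12}=\alpha$ and the other three trivial shows conversely that each $\alpha\in\Xi_N$ is realized.

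Finally, uniqueness follows by evaluating the formula at test points: choosing $p_1=p_4=1$, $p_2=p_3=0$, $k_1=k$, and $k_4=0$ recovers $\exp(2i\pi\alpha_k)$, hence $\alpha_k\in[0,1)$, for every $k\in\N$, so $\alpha$ is determined by $\varphi$. The step I expect to be the main obstacle is the complete and rigorous parametrization of $B(\Q_N)$ --- proving that the compatibility relations force dependence on $j+k$ alone and that no further bicharacters exist --- together with keeping the index bookkeeping in the skew-symmetrization honest enough to see the diagonal cancellation cleanly.
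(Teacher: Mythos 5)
Your proposal is correct and follows essentially the same route as the paper: both parametrize bicharacters of $\Q_N^2$ by four elements of $\Xi_N$ (one per coordinate block, using that the values depend only on the sum of the denominator exponents) and observe that skew-symmetrization cancels the diagonal blocks and combines the off-diagonal ones into a single $\alpha\in\Xi_N$. Your version isolates the one-variable case $B(\Q_N)\cong\solenoid_N$ first and absorbs the paper's explicit renormalization of $\gamma-\mu$ into $[0,1)$ by simply computing the difference in the group $\Xi_N$, but the substance is identical.
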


\begin{proof}
If $\alpha\in\Xi_N$ then $\alpha_k\in[0,1)$ for all $k\in\N$. Indeed $\alpha_0\in[0,1)$ and if $\alpha_k\in[0,1)$ then $\alpha_{k+1} = \frac{\alpha_k+j}{N}$ with $0\leq j \leq N-1$ so $0\leq \alpha_{k+1} < 1$, so our claim holds by induction. With this observation, it becomes straightforward to check that $\Xi_N$ is a group for the operation of entry-wise addition modulo one.

By definition of $\Xi_N$, the map $e:\Xi_N\mapsto \solenoid_N$ defined by $e(\alpha)_k=\exp(2i\pi \alpha_k)$ for any $\alpha\in\Xi_N$ is a bijection, which is easily checked to be a group isomorphism.
 
\bigskip Following \cite{Kleppner65}, let $B$ be the group of bicharacters of $\Q_N^2$ and denote the group $B^{(2)}(\Q_N^2)$ simply by $B^{(2)}$.

The motivation for this computation is that, as a group, $B^{(2)}(\Q_N^2)$ is isomorphic to $H^2(\Q_N^2)$ by \cite[Theorem 7.1]{Kleppner65} since $\Q_N$ is discrete and countable. However, we will find a more convenient form of $H^2(\Q_N^2)$ in our next theorem using the following computation:

\bigskip Let $\Psi\in B^{(2)}$. Fix $\varphi\in B$ such that:
\[
\Psi : x,y \in \Q_N^2\times \Q_N^2 \longmapsto \varphi(x,y)\varphi(y,x)^{-1}\text{.}
\]
Now, the dual of $\Q_N^2$ is $\solenoid_N^2$ with pairing given in Proposition (\ref{solenoids}). The map:
\[
\frac{p}{N^k}\in\Q_N\longmapsto \varphi\left((1,0),\left(\frac{p}{N^k},0\right)\right)
\]
is a character of $\Q_N$, so there exists a unique $\zeta\in\solenoid_N$ such that:
\[
\varphi\left((1,0),\left(\frac{p}{N^k},0\right)\right) = \zeta_{k}^p
\]
for all $p\in\Z, k\in\N$. Similarly, there exists $\eta,\chi,\xi\in\solenoid_N$ such that for all $p\in\Z,k\in\N$ we have:
\begin{eqnarray*}
\varphi\left((0,1),\left(\frac{p}{N^k},0\right)\right) &=& \eta_{k}^p \\
\varphi\left((0,1),\left(0,\frac{p}{N^k}\right)\right) &=& \chi_{k}^p \\
\varphi\left((1,0),\left(0,\frac{p}{N^k}\right)\right) &=& \xi_{k}^p
\end{eqnarray*}

Using the bicharacter property of $\varphi$ again, we arrive at:
\[
\varphi\left(\left(p_1,p_2\right),\left(\frac{p_3}{N^{k_3}},\frac{p_4}{N^{k_4}}\right)\right) = \zeta_{k_3}^{p_1p_3}\eta_{k_3}^{p_2p_3}\chi^{p_2p_4}_{k_4}\xi^{p_1p_4}_{k_4}\text{.}
\]
Now, since $\varphi\left(\left(\frac{1}{N^k},0\right),\left(\frac{p}{N^{k_3}},0\right)\right)^{(N^k)} = \varphi\left( (1,0),\left(\frac{p}{N^{k_3}}\right)\right)$, there exists $\nu\in\solenoid_N$ with $\nu_0 = 1$ such that:
\[
\varphi\left(\left(\frac{1}{N^k},0\right),\left(\frac{p}{N^{k_3}},0\right)\right) = \nu_k\zeta_{k+k_3}^p\text{,}
\]
where we use the property that $\zeta_{k+k_3}^{(N^k)} = \zeta_{k_3}$.
Since $\varphi(g,0)=1$ for any $g\in\Q_N^2$, we have $\nu = 1$. By the same method, we deduce:
\[
\varphi\left(\left(\frac{p_1}{N^{k_1}},\frac{p_2}{N^{k_2}}\right),\left(\frac{p_3}{N^{k_3}},\frac{p_4}{N^{k_4}}\right)\right) = \zeta_{k_1+k_3}^{p_1p_3}\eta_{k_2+k_3}^{p_2p_3}\chi^{p_2p_4}_{k_2+k_4}\xi^{p_1p_4}_{k_1+k_4}\text{.}
\]
Now, by setting all but one of $p_1,p_2,p_3,p_4$ to zero, we see that $\varphi$ determines $(\eta,\zeta,\chi,\xi)\in\solenoid_N^4$ uniquely. Thus, we have defined an injection $\iota$ from the group of bicharacters of $\Q_N^2$ into $\solenoid_N^4$ by setting, with the above notation: $\iota(\varphi)=(\zeta,\xi,\eta,\chi)$. It is straightforward that this map is a bijection.

Thus, $\vartheta:\iota^{-1}\circ e^{\otimes 4} : \Xi_N^4 \rightarrow B(\Q_N^2)$ is a bijection, so there exists a unique $(\beta,\gamma,\mu,\rho)\in\Xi_N^4$ such that for all $p_1, p_2, p_3, p_4 \in \Z$ and $k_1, k_2, k_3, k_4 \in \N$, the value $$\varphi\left(\left(\frac{p_1}{N^{k_1}},\frac{p_2}{N^{k_2}}\right),\left(\frac{p_3}{N^{k_3}},\frac{p_4}{N^{k_4}}\right)\right)$$ is given by:
\[
\exp\left( 2i\pi 
\left[
\begin{array}{cc}
p_1 & p_2
\end{array}
\right] 
\left[ 
\begin{array}{cc}
\beta_{k_1+k_3} & \gamma_{k_1+k_4}\\
\mu_{k_2+k_3} & \rho_{k_2+k_4}
\end{array}
\right] 
\left[ 
\begin{array}{c}
p_3 \\ p_4
\end{array}
\right]
\right) \text{.}
\]
Thus, $\varphi\left(\left(\frac{p_3}{N^{k_3}},\frac{p_4}{N^{k_4}}\right),\left(\frac{p_1}{N^{k_1}},\frac{q_1}{N^{k_2}}\right)\right)^{-1}$ is given by:
\[ 
\exp\left( -2i\pi 
\left[
\begin{array}{cc}
p_1 & p_2
\end{array}
\right] 
\left[ 
\begin{array}{cc}
\beta_{k_1+k_3} & \mu_{k_1+k_4}\\
\gamma_{k_2+k_3} & \rho_{k_2+k_4}
\end{array}
\right] 
\left[ 
\begin{array}{c}
p_3 \\ p_4
\end{array}
\right]
\right) 
\]
after transposing the matrix multiplication as the product is a scalar. 
So $$\Psi\left(\left(\frac{p_1}{N^{k_1}},\frac{p_2}{N^{k_2}}\right),\left(\frac{p_3}{N^{k_3}},\frac{p_4}{N^{k_4}}\right)\right)$$ is:
\begin{equation}\label{skew-bicharacter}
\exp\left( 2i\pi 
\left[
\begin{array}{cc}
p_1 & p_2
\end{array}
\right] 
\left[ 
\begin{array}{cc}
0 & (\gamma-\mu)_{(k_1+k_4)}\\
(\mu-\gamma)_{(k_2+k_3)} & 0
\end{array}
\right] 
\left[ 
\begin{array}{c}
p_3 \\ p_4
\end{array}
\right]
\right)
\end{equation}
though it is not in our chosen canonical form, i.e. $\gamma-\mu$ may not lie in $\Xi_N$ --- it takes values in $(-1,1)$ instead of $[0,1)$. Let us find the unique element of $\Xi_N^4$ which is mapped by $\vartheta$ to $\Psi$. Observe that we can add any integer to the entries of the matrix in Expression (\ref{skew-bicharacter}) without changing $\Psi$. Let $n\in\N$. Set $\epsilon_n$ to be $1$ if $\gamma_n-\nu_n<0$, or to be $0$ otherwise. Let $\omega_n^1 = \epsilon_n + \gamma_n - \mu_n$ and $\omega_n^2 = (1-\epsilon_n ) + \mu_n - \gamma_n$. We check that $\omega^1,\omega^2 \in \Xi_N$ and that $\omega^1_n + \omega^2_n = 1$ for all $n\in\N$. We can moreover write:
$$\Psi\left(\left(\frac{p_1}{N^{k_1}},\frac{p_2}{N^{k_2}}\right),\left(\frac{p_3}{N^{k_3}},\frac{p_4}{N^{k_4}}\right)\right)$$ as:
\begin{equation}\label{skew-bicharacter-2}
\exp\left( 2i\pi 
\left[
\begin{array}{cc}
p_1 & p_2
\end{array}
\right] 
\left[ 
\begin{array}{cc}
0 & (\omega^1)_{(k_1+k_4)}\\
(\omega^2)_{(k_2+k_3)} & 0
\end{array}
\right] 
\left[ 
\begin{array}{c}
p_3 \\ p_4
\end{array}
\right]
\right)
\end{equation}
i.e. $\Psi = \vartheta(0,\omega^1,\omega^2,0)$. Since $\omega^1+\omega^2$ is the constant sequence $(1)_{n\in\N}$, we have in fact constructed a bijection from $\Xi_N$ onto $B^2(\Q_N^2)$ as desired.

The form for $\Psi$ proposed in the Theorem is more convenient. We obtain it by simply subtracting $1$ from $\omega^2_n$ for all $n\in\N$, which does not change the value of Expression(\ref{skew-bicharacter-2}). We thus get:

$$\Psi\left(\left(\frac{p_1}{N^{k_1}},\frac{p_2}{N^{k_2}}\right),\left(\frac{p_3}{N^{k_3}},\frac{p_4}{N^{k_4}}\right)\right) =$$
\[
\exp\left( 2i\pi 
\left[
\begin{array}{cc}
p_1 & p_2
\end{array}
\right] 
 \left[ 
\begin{array}{cc}
0 & \alpha_{(k_1+k_4)}\\
-\alpha_{(k_2+k_3)} & 0
\end{array}
 \right]
\left[ 
\begin{array}{c}
p_3 \\ p_4
\end{array}
\right]
\right)\text{.}
\]

\end{proof}

While \cite{Kleppner65} shows that $B^{(2)}(\Q_N^2)$ is, as a group, isomorphic to $H^2(\Q_N^2)$, a point of subtlety is that several elements of $B^{(2)}(\Q_N^2)$ may be cohomologous, i.e. there are in general two non-cohomologous multipliers of $\Q_N^2$ which are mapped by this isomorphism to two distinct but cohomologous multipliers in $B^{(2)}(\Q_N^2)$. 
\begin{example}
If $N=3$, then one checks that $\alpha=\left(\frac{1}{2}\right)_{n\in\N}\in\Xi_3$. This element corresponds to the element $(-1)_{n\in\N}$ in $\solenoid_3$. Now, if $\phi$ is given by Theorem (\ref{skew-multipliers}), then $\varphi\in B^{(2)}(\Q_3^2)$ is symmetric. Hence it is cohomologous to the trivial multiplier $1\in B^{(2)}(\Q_3^2)$. However, there exists two multipliers $\sigma_1,\sigma_2$ of $\Q_3^2$ which are not cohomologous, and map, respectively, to $\varphi$ and $1$, since \cite{Kleppner65} shows that there is a bijection from $H^2(\Q_3^2)$ onto $B^{(2)}(\Q_3^2)$.
\end{example}
This is quite inconvenient, and we prefer, for this reason, the description of multipliers of $\Q_N^2$ up to equivalence given by our next Theorem (\ref{multipliers}). 

\begin{theorem}\label{multipliers}
Let $N\in\N,N>1$. There exists a group isomorphism $\rho : H^2(Q_N^2) \rightarrow \Xi_N$ such that if $\sigma\in H^2(\Q_N^2)$ and $\alpha=\rho(\sigma)$, and if $f$ is a multiplier of class $\sigma$, then $f$ is cohomologous to:
\[
\Psi_\alpha\left(\left(\frac{p_1}{N^{k_1}},\frac{p_2}{N^{k_2}}\right),\left(\frac{p_3}{N^{k_3}},\frac{p_4}{N^{k_4}}\right)\right) = \exp(2i\pi \alpha_{(k_1+k_4)}p_1p_4)\text{.}
\]
\end{theorem}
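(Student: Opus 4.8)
The plan is to assemble $\rho$ from the two identifications already in hand and then to exhibit the asymmetric formula $\Psi_\alpha$ as a convenient cocycle representative by a single skew-symmetrization computation. First I would recall from \cite[Theorem 7.1]{Kleppner65} that, since $\Q_N^2$ is discrete and countable, the skew-symmetrization map $f\mapsto\left((x,y)\mapsto f(x,y)f(y,x)^{-1}\right)$ induces a group isomorphism $H^2(\Q_N^2)\xrightarrow{\ \sim\ }B^{(2)}(\Q_N^2)$; in particular, two multipliers are cohomologous if and only if they share the same skew-symmetrization. Composing this with the group isomorphism $\Xi_N\xrightarrow{\ \sim\ }B^{(2)}(\Q_N^2)$, $\alpha\mapsto\Psi^{\mathrm{skew}}_\alpha$, furnished by Theorem (\ref{skew-multipliers}) (where $\Psi^{\mathrm{skew}}_\alpha$ denotes the skew bicharacter $\exp(2i\pi(\alpha_{(k_1+k_4)}p_1p_4-\alpha_{(k_2+k_3)}p_2p_3))$), I define $\rho$ to be the composite $H^2(\Q_N^2)\to B^{(2)}(\Q_N^2)\to\Xi_N$. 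As a composite of group isomorphisms it is a group isomorphism, which settles the first assertion, and $\rho(\sigma)=\alpha$ means precisely that every multiplier $f$ of class $\sigma$ skew-symmetrizes to $\Psi^{\mathrm{skew}}_\alpha$.

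It then remains to show that $\Psi_\alpha=\exp(2i\pi\alpha_{(k_1+k_4)}p_1p_4)$ is a multiplier lying in the class $\sigma=\rho^{-1}(\alpha)$. For the first point I would observe that $\Psi_\alpha$ is exactly the image $\vartheta(0,\alpha,0,0)$ under the bijection $\vartheta:\Xi_N^4\to B(\Q_N^2)$ constructed in the proof of Theorem (\ref{skew-multipliers}): taking the off-diagonal entry of the defining matrix to be $\alpha$ and all other entries to be $0$ produces precisely the factor $\alpha_{(k_1+k_4)}p_1p_4$. Hence $\Psi_\alpha$ is a genuine bicharacter of $\Q_N^2$, and every bicharacter is automatically a normalized $2$-cocycle, so $\Psi_\alpha$ is a multiplier. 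For the second point I would compute the skew-symmetrization directly: with $x=\left(\frac{p_1}{N^{k_1}},\frac{p_2}{N^{k_2}}\right)$ and $y=\left(\frac{p_3}{N^{k_3}},\frac{p_4}{N^{k_4}}\right)$, evaluating $\Psi_\alpha(y,x)$ swaps the roles of the two arguments and yields $\exp(2i\pi\alpha_{(k_2+k_3)}p_2p_3)$, so that $\Psi_\alpha(x,y)\Psi_\alpha(y,x)^{-1}=\exp(2i\pi(\alpha_{(k_1+k_4)}p_1p_4-\alpha_{(k_2+k_3)}p_2p_3))=\Psi^{\mathrm{skew}}_\alpha(x,y)$.

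Putting these together, $\Psi_\alpha$ is a multiplier whose skew-symmetrization equals $\Psi^{\mathrm{skew}}_\alpha$, which by construction is the common skew-symmetrization of every multiplier of class $\sigma$; by the Kleppner criterion recalled above, $\Psi_\alpha$ is therefore cohomologous to any such $f$, as claimed.

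I expect the only genuine subtlety to be bookkeeping rather than conceptual: one must track the index conventions of the matrix presentation in Theorem (\ref{skew-multipliers}) carefully when interchanging the two arguments, so that the entry producing $p_1p_4$ at level $k_1+k_4$ is matched against the entry producing $p_2p_3$ at level $k_2+k_3$ with the correct sign. A misplaced index or sign here would break the identification of $\widetilde{\Psi_\alpha}$ with $\Psi^{\mathrm{skew}}_\alpha$; beyond this, everything reduces to the two isomorphisms already established and the cited result of Kleppner.
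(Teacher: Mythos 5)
Your proposal is correct and follows essentially the same route as the paper: the paper likewise composes the Kleppner isomorphism $H^2(\Q_N^2)\cong B^{(2)}(\Q_N^2)$ with the parametrization of $B^{(2)}(\Q_N^2)$ by $\Xi_N$ from Theorem (\ref{skew-multipliers}), and exhibits $\Psi_\alpha$ as a cross-section of the skew-symmetrization map $\delta$ (your computation that $\Psi_\alpha(x,y)\Psi_\alpha(y,x)^{-1}=\Psi^{\mathrm{skew}}_\alpha(x,y)$ is exactly the verification that $\delta\circ\mu=\mathrm{id}$). Your explicit check that $\Psi_\alpha$ is a bicharacter, hence a multiplier, whose class maps to $\Psi^{\mathrm{skew}}_\alpha$ correctly navigates the subtlety the paper flags in its example, namely that a skew bicharacter need not itself represent the cohomology class it parametrizes.
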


\begin{proof}
Let $\delta : B(\Q_N^2) \rightarrow B^{(2)}(\Q_N^2)$ be the epimorphism from the group of bicharacters of $\Q_N^2$ onto $B^{(2)}(\Q_N^2)$ defined by:
\[
\delta(\varphi) : (x,y) \in \Q_N^2 \mapsto \varphi(x,y)\varphi(y,x)^{-1}
\]
for all $\varphi \in B(\Q_N^2)$. We shall define a cross-section $\mu : B^{(2)}(\Q_N^2) \rightarrow B(\Q_N^2)$, i.e. a map such that $\delta\circ\mu$ is the identity on $B^{(2)}(\Q_N^2)$.

For $\varphi \in B^{(2)}(\Q_N^2)$, by Theorem (\ref{skew-multipliers}) there exists a unique $\alpha\in\Xi_N$ such that:
$$\varphi\left(\left(\frac{p_1}{N^{k_1}},\frac{p_2}{N^{k_2}}\right),\left(\frac{p_3}{N^{k_3}},\frac{p_4}{N^{k_4}}\right)\right) =$$
\[
\exp\left( 2i\pi 
\left[
\begin{array}{cc}
p_1 & p_2
\end{array}
\right] 
 \left[ 
\begin{array}{cc}
0 & \alpha_{(k_1+k_4)}\\
-\alpha_{(k_2+k_3)} & 0
\end{array}
 \right]
\left[ 
\begin{array}{c}
p_3 \\ p_4
\end{array}
\right]
\right)\text{.}
\]
Define $\mu(\varphi)=\Psi_\alpha$. We then check immediately that $\delta\circ\mu$ is the identity.

Now, denote by $\zeta:H^2(\Q_N^2)\rightarrow B^{(2)}(\Q_N^2)$ the isomorphism from \cite{Kleppner65}. If $f$ and $g$ are two multipliers of $\Q_N^2$, then $\zeta(f)=\zeta(g)\in B^2(\Q_N^2)$ if and only if $f,g$ are cohomologous. So $\mu(\zeta(f))$ is cohomologous to $f$ as desired.
\end{proof}

\begin{remark}
We thus have shown that $H^2(\Q_N^2)$ is isomorphic to $\solenoid_N$ for all $N\in\N,N>1$. However, we find the identification of $H^2(\Q_N^2)$ with $\Xi_N$ more practical in our proofs.
\end{remark}

The simplicity of twisted group C*-algebras is related to the symmetrizer subgroup of the twisting bicharacter. We thus establish, using the notations introduced in Theorem (\ref{skew-multipliers}), a necessary and condition for the triviality of the symmetrizer group of multipliers of $\Q_N$ for $N\in\N,N>1$. As our work will show, it is in fact fruitful to invest some effort in working with a generalization of the group $\Xi_N$ based upon certain sequences of prime numbers. 

\begin{definition}
The set of all sequences of prime numbers with finite range is denoted by $\primeseq$. 
\end{definition}

As a matter of notation, if $\Lambda\in\primeseq$ then its $n^{\mathrm{th}}$ entry is denoted by $\Lambda_n$, so that $\Lambda=(\Lambda_n)_{n\in\N}$.

\begin{definition}
Let $\Lambda\in\primeseq$. For all $k\in\N,K>0$ we define $\pi_k(\Lambda)$ as $\prod_{j=0}^{k-1} \Lambda_j$, and $\pi_0(\Lambda)=1$. The set $\{\pi_k(\Lambda):k\in\N\}$ is denoted by $\Pi(\Lambda)$. Note that $\pi$ defines a strictly increasing map from $\N$ into $\Pi(\Lambda)$, whose inverse will be denoted by $\delta$.
\end{definition}

Periodic sequences form a subset of $\primeseq$, and if we impose a specific ordering on the prime numbers appearing in the smallest period of such a periodic sequence, we can define a natural embedding of $\N \setminus \{0,1\}$ in $\primeseq$. We shall use:

\begin{notation}
Given two integers $n$ and $m$, the remainder for the Euclidean division of $n$ by $m$ in $\Z$ is denoted by $n \mod m$. On the other hand, given $H\subset G$ and $x,y\in G$, then $x\equiv y \mod H$ means that $x$ and $y$ are in the same $H$-coset in $G$.
\end{notation}

\begin{definition}
Let $\Lambda\in\primeseq$ be a periodic sequence. If $T$ is the minimal period of $\Lambda\in\primeseq$, we define $\nu(\Lambda)$ to be the natural number $\pi_{T-1}(\Lambda)=\prod_{n=0}^{T-1} \Lambda_n$. Conversely, if $N\in\N$ and $N>1$, we define $\Lambda(N)\in\primeseq$ as the sequence $(\lambda_{n \mod \Omega(N)})_{n\in\N}$ where $\Omega(N)$ is the number of primes in the decomposition of $N$, $\lambda_0\leq\ldots\leq\lambda_{\Omega(N)-1}$ are prime and $N=\prod_{j=0}^{\Omega(N)-1} \lambda_j$. Thus in particular, $\nu(\Lambda(N))=N$. 
\end{definition}

A central family of objects for our work is given by:

\begin{definition}
Let $\Lambda\in\primeseq$. The group $\Xi_\Lambda$ is defined as a set by:
\[
\Xi_\Lambda = \{ (\alpha_n)_{n\in\N} : \forall n\in\N\;\; \exists k\in \{0,\ldots,\Lambda_{n}-1\}\;\;\; \Lambda_n \alpha_{n+1} = \alpha_n + k \}\text{,}
\]
and with the operation of pointwise addition modulo 1.
\end{definition}

The group $\Xi_{\Lambda(N)}$ is isomorphic to $\Xi_N$, as defined in Theorem (\ref{skew-multipliers}). An explicit construction of an isomorphism is given by:
\begin{proposition}\label{XiNXiLambdaIso}
Let $N\in\N$ with $N>1$. Let $\Omega(N)$ be the minimal period of $\Lambda(N)$, i.e. the number of prime factors in the decomposition of $N$. The map:
\[
\omega : \left\{ 
\begin{array}{ccc}
\Xi_{\Lambda(N)} & \longrightarrow & \Xi_N \\
(\nu_n)_{n\in\N} & \longmapsto & (\nu_{n\Omega(N)})_{n\in\N}
\end{array}
\right.
\]
is a group isomorphism.
\end{proposition}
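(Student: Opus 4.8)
The plan is to recognize $\omega$ as the ``subsampling'' map that records every $\Omega(N)$-th entry of a sequence, and to exhibit an explicit two-sided inverse rather than argue injectivity and surjectivity separately. Write $\Omega=\Omega(N)$ and let $\lambda_0\leq\cdots\leq\lambda_{\Omega-1}$ be the primes with $N=\prod_{j=0}^{\Omega-1}\lambda_j$, so that $\Lambda(N)_n=\lambda_{n\bmod\Omega}$ and $\Lambda(N)$ is periodic of period $\Omega$. The whole argument rests on one telescoping identity: for $(\alpha_n)\in\Xi_{\Lambda(N)}$, composing the $\Omega$ defining relations between the indices $n\Omega$ and $(n+1)\Omega$ yields $\alpha_{n\Omega}\equiv\bigl(\prod_{j=0}^{\Omega-1}\Lambda(N)_{n\Omega+j}\bigr)\alpha_{(n+1)\Omega}\pmod 1$, and since $\Lambda(N)_{n\Omega+j}=\lambda_j$ the product over one period is exactly $N$. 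Thus $\alpha_{n\Omega}\equiv N\,\alpha_{(n+1)\Omega}\pmod 1$, which is precisely the defining relation of $\Xi_N$.

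First I would dispose of the easy parts. Since addition in both groups is pointwise modulo one and $\omega$ is merely restriction to the index set $\{n\Omega:n\in\N\}$, it is immediate that $\omega$ is a group homomorphism. The telescoping identity above, together with the observation (by the same induction used in the proof of Theorem~\ref{skew-multipliers}) that every entry of an element of $\Xi_{\Lambda(N)}$ lies in $[0,1)$, shows that $\omega$ indeed takes values in $\Xi_N$; so $\omega$ is well defined.

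The core of the proof is the construction of the inverse $\theta:\Xi_N\to\Xi_{\Lambda(N)}$ by ``downward interpolation''. Given $(\nu_n)\in\Xi_N$, I would set $\theta(\nu)_{n\Omega}=\nu_n$ and fill in the intermediate entries of each block $[n\Omega,(n+1)\Omega)$ using the recursion of $\Xi_{\Lambda(N)}$ read downward from the top of the block, explicitly $\theta(\nu)_{n\Omega+j}=\bigl(\prod_{i=j}^{\Omega-1}\lambda_i\bigr)\nu_{n+1}\bmod 1$ for $0\leq j\leq\Omega-1$. That $\omega\circ\theta=\mathrm{id}$ is the case $j=0$ of this formula combined with $\prod_{i=0}^{\Omega-1}\lambda_i=N$ and the relation $\nu_n\equiv N\nu_{n+1}$. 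That $\theta\circ\omega=\mathrm{id}$ follows because, for any $\alpha\in\Xi_{\Lambda(N)}$, iterating the defining relation downward shows the entries strictly inside a block are already forced by the value at the top of the block, i.e. $\alpha_{n\Omega+j}\equiv\bigl(\prod_{i=j}^{\Omega-1}\lambda_i\bigr)\alpha_{(n+1)\Omega}\pmod 1$, so $\theta$ reconstructs $\alpha$ from $\omega(\alpha)$.

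The main obstacle, and essentially the only nonformal point, is verifying that $\theta(\nu)$ genuinely belongs to $\Xi_{\Lambda(N)}$ --- that is, checking the defining relation holds at every index, including at the boundaries where consecutive blocks meet. Inside a block this is immediate from the definition of $\theta$; at a boundary index $m=n\Omega$ it amounts to showing that interpolating one full period downward from $\nu_{n+1}$ returns $\nu_n$, which is exactly where the identity $\prod_{j=0}^{\Omega-1}\lambda_j=N$ and the membership $\nu\in\Xi_N$ must be invoked together. I would also flag the harmless imprecision in the statement that $\Omega(N)$ is the \emph{minimal} period of $\Lambda(N)$ (for instance $\Lambda(p^2)$ is constant, of minimal period $1$): the proof only uses that $\Omega(N)$ is \emph{a} period of $\Lambda(N)$ and that the primes over one such period multiply to $N$.
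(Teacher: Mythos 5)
Your proof is correct and follows essentially the same route as the paper: the paper also defines the inverse by setting $\eta_{n\Omega(N)}=\nu_n$ and interpolating downward through each block, its recursion $\eta_{n\Omega(N)+j}=\Lambda_j\eta_{n\Omega(N)+j+1}-r_j$ being exactly your closed-form $\bigl(\prod_{i=j}^{\Omega-1}\lambda_i\bigr)\nu_{n+1}\bmod 1$ with the remainders made explicit, and the same telescoping $\prod_{j=0}^{\Omega-1}\lambda_j=N$ verifying consistency at the block boundaries. The only differences are presentational (a two-sided inverse rather than ``monomorphism plus surjectivity''), and your remark that $\Omega(N)$ need not be the \emph{minimal} period of $\Lambda(N)$ is a fair, harmless correction to the statement.
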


\begin{proof}
Let $\alpha\in\Xi_{\Lambda(N)}$. Define $\omega(\alpha)_k = \alpha_{k\Omega(N)}$ for all $k\in\N$. It is immediate to check that $\omega(\alpha)\in\Xi_N$ and, thus defined, $\omega$ is a group monomorphism. We shall now prove it is also surjective. Let us denote $\Lambda(N)$ simply by $\Lambda$.

 Let $(\nu_{n\in\N})\in\Xi_N$. Let $\eta_{n\Omega(N)} = \nu_n$ for all $n\in\N$. Let $n\in\N$. By definition of $\Xi_N$, there exists $m\in\{0,\ldots,N-1\}$ such that $N\nu_{n+1}=\nu_n + m$. Let $r_0,m_0$ be the remainder and quotient for the Euclidean division of $m$ by $\Lambda_0$. More generally, we construct $m_{j+1},r_{j+1}$ as respectively the quotient and remainder of The Euclidean division of $m_j$ by $\Lambda_j$ for $j=0,\ldots,\Omega(N)-1$. Set:
\[
\eta_{n\Omega(N)+j} = \Lambda_j \eta_{n\Omega(N)+j+1} - r_j
\]
for all $j=0,\Omega(N)-1$. We have given two definitions of $\eta_{n\Omega(N)}$ and need to check they give the same values:
\begin{eqnarray*}
N \eta_{(n+1)\Omega(N)} &=& \Lambda_0 \cdots \Lambda_{\Omega(N)-1} \eta_{(n+1)\Omega(N)}\\
&=& \Lambda_0 \cdots \Lambda_{\Omega(N)-2} (\eta_{(n+1)\Omega(N)-1} + r_{\Omega(N)-1})\\
\cdots &=& \eta_{n\Omega}+r_0 + \Lambda_0(r_1 + \Lambda_1( r_2 +\cdots)) = \eta_{n\Omega(N)}+k
\end{eqnarray*}
so our construction leads to a coherent result. Now, by construction, $\eta\in\Xi_{\Lambda(N)}$, and $\omega(\eta)=\nu$. Hence $\omega$ is a group isomorphism. This completes our proof.
\end{proof}

\bigskip 
\begin{remark}
The group $\Xi_\Lambda$ can be topologized as a subspace of $([0,1]/\sim)^\N$ where $\sim$ is the equivalence relation defined by $x\sim y \iff (x=y)\vee (x=0\wedge y=1)$. With this topology, the natural isomorphism $e$ is of course an homeomorphism, so that $\Xi_\Lambda$ is isomorphic to:
\[
\solenoid_\Lambda = \{ (z_n)_{n\in\N} \in \T^{\N}: \forall n \in \N \;\; z_{n+1}^{\Lambda_n} = z_n \}
\]
as a topological group, though we shall not need this.
\end{remark}

\bigskip We are now ready to establish a necessary and sufficient condition for the symmetrizer group of a given multiplier to be nontrivial.

\begin{theorem}\label{symmetrizer}
Let $N\in\N,N>1$. Let $\alpha\in\Xi_N$. The symmetrizer subgroup in $\Q_N^2$ for $\Psi_\alpha$ is defined by: 
\[
\mathcal{S}_\alpha = \left\{ g = \left( \frac{p_1}{N^{k_1}},\frac{p_2}{N^{k_2}}\right) \in \Q_N^2 : 
\Psi_\alpha(g,\cdot) = \Psi(\cdot,g) \right\}\text{.}
\]
The following assertions are equivalent:
\begin{enumerate}
\item The symmetrizer group $\mathcal{S}_\alpha$ is non-trivial,
\item The sequence $\alpha$ has finite range (i.e. $\{\alpha_n : n\in\N \}$ is finite).
\item There exists $j<k\in\N$ such that $\alpha_j=\alpha_k$,
\item There exists $k\in\N$ such that $(N^k-1)\alpha_0 \in \Z$,
\item The sequence $\alpha$ is periodic.
\item The group $\mathcal{S}_\alpha$ is either $\Q_N^2$ (which is equivalent to $\alpha= 0$) or there exists a nonzero $b\in\N$ such that:
\[
\mathcal{S}_\alpha = \left\{ \left(\frac{p_1b}{N^m},\frac{p_2b}{N^n}\right) : p_1,p_2\in\Z,n.m\in\N \right\}\text{.}
\]
\end{enumerate}
\end{theorem}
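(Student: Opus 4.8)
The plan is to reduce the whole statement to an explicit description of $\mathcal{S}_\alpha$ read off directly from the formula for $\Psi_\alpha$ in Theorem (\ref{multipliers}), and then to translate the resulting arithmetic into the dynamics of the map $\tau : x \mapsto Nx \bmod 1$ on $[0,1)$, under which an element $\alpha\in\Xi_N$ is precisely a backward orbit: the defining relation $N\alpha_{n+1}=\alpha_n+k$ with $k\in\{0,\ldots,N-1\}$ and $\alpha_n\in[0,1)$ forces $k=\lfloor N\alpha_{n+1}\rfloor$, hence $\alpha_n=\tau(\alpha_{n+1})$ and $\alpha_0=\tau^k(\alpha_k)$ for every $k$.

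First I would compute $\mathcal{S}_\alpha$. Writing $g=\left(\frac{p_1}{N^{k_1}},\frac{p_2}{N^{k_2}}\right)$ and an arbitrary $h=\left(\frac{p_3}{N^{k_3}},\frac{p_4}{N^{k_4}}\right)$, the formula gives $\Psi_\alpha(g,h)=\exp(2i\pi\,\alpha_{k_1+k_4}p_1p_4)$ and $\Psi_\alpha(h,g)=\exp(2i\pi\,\alpha_{k_2+k_3}p_2p_3)$, so $g\in\mathcal{S}_\alpha$ means $\alpha_{k_1+k_4}p_1p_4-\alpha_{k_2+k_3}p_2p_3\in\Z$ for all $p_3,p_4\in\Z$ and $k_3,k_4\in\N$. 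Since the two terms involve disjoint free variables, setting $p_3=0$ and then $p_4=0$ shows the condition factors and is equivalent to $p_1\alpha_m\in\Z$ for all $m\geq k_1$ together with $p_2\alpha_m\in\Z$ for all $m\geq k_2$. Thus the coordinates decouple: $\mathcal{S}_\alpha=S_\alpha\times S_\alpha$, where $S_\alpha=\{\frac{p}{N^k}\in\Q_N : p\alpha_m\in\Z \text{ for all } m\geq k\}$. This single computation is the backbone: it reduces (1) to the assertion $S_\alpha\neq\{0\}$, and it yields (6) at once once periodicity is established.

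Next I would record the arithmetic of $\tau$: its periodic points are exactly the rationals with denominator coprime to $N$, and a backward orbit of $\tau$ that stays among periodic points has nonincreasing denominator. With this in hand the equivalences fall out. For (2) $\Leftrightarrow$ (5) I would argue that if $\{\alpha_n\}$ is finite then $\tau$ maps this finite set onto itself, hence is a bijection of it, so $\alpha_n=\pi^{-n}(\alpha_0)$ for a permutation $\pi$ and $\alpha$ is periodic; the converse is immediate. The chain (5) $\Rightarrow$ (2) $\Rightarrow$ (3) is trivial (pigeonhole), and (3) $\Rightarrow$ (4) follows from downward determinism, since $\alpha_j=\alpha_k$ with $j<k$ propagates down to $\alpha_0=\alpha_{k-j}$ and then $\alpha_0=\tau^{k-j}(\alpha_0)$, i.e. $(N^{k-j}-1)\alpha_0\in\Z$. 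The arithmetic step identifies (4) with $\alpha_0$, and then every $\alpha_m$, being a $\tau$-periodic point, so that the denominator bound gives finite range and closes the loop to (2). Finally (2) $\Rightarrow$ (1) comes from taking $b$ a common denominator of the finitely many rational values of $\alpha$ and noting $(b,0)\in\mathcal{S}_\alpha$, while (1) $\Rightarrow$ (2) follows because a nonzero $\frac{p_1}{N^{k_1}}\in S_\alpha$ forces $\alpha_m\in\frac{1}{p_1}\Z\cap[0,1)$ for all $m\geq k_1$, a finite set. For (6), in the periodic case every tail $\{\alpha_m:m\geq k\}$ equals the full finite range, so ``$p\alpha_m\in\Z$ for all $m\geq k$'' is equivalent to $b\mid p$ for $b$ the least common denominator of the range; hence $S_\alpha=\{\frac{pb}{N^k}\}$ and $\mathcal{S}_\alpha=S_\alpha\times S_\alpha$ takes the stated form, while $\alpha=0$ gives $S_\alpha=\Q_N$ and $\mathcal{S}_\alpha=\Q_N^2$.

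I expect the main obstacle to lie not in the symmetrizer computation but in the arithmetic heart, namely passing from the recurrence of a single value (conditions (3), (4)) to finite range of the entire sequence. The delicacy is intrinsic: $\Xi_N$ consists of backward orbits of $\tau$, in which high-index terms are \emph{not} determined by the initial term, so one must genuinely exploit that remaining among periodic points of $\tau$ prevents denominators from growing. Isolating the least common denominator $b$ of the range and verifying that it is independent of the tail is exactly what produces the clean generator in (6), and this is the step I would write out most carefully.
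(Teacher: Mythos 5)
Your computation of the symmetrizer is correct and is a genuine improvement on the paper's argument for $(1)\Rightarrow(2)$. Since $\Psi_\alpha(g,h)\Psi_\alpha(h,g)^{-1}=\exp\left(2i\pi(\alpha_{k_1+k_4}p_1p_4-\alpha_{k_2+k_3}p_2p_3)\right)$ and the two terms carry disjoint free variables, setting $p_3=0$ and then $p_4=0$ does show that $\mathcal{S}_\alpha=S_\alpha\times S_\alpha$ with $S_\alpha=\{p/N^k : p\alpha_m\in\Z\ \text{for all } m\geq k\}$ (a condition one checks to be independent of the representative $p/N^k$). A nonzero element then gives $\alpha_m\in\frac{1}{|p|}\Z\cap[0,1)$ for all large $m$, hence finite range, in two lines. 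The paper instead reaches $(1)\Rightarrow(2)$ by embedding a tail of $\alpha$ into $\Xi_{\Lambda'}$ for a rearranged prime sequence so as to replace the numerator by one coprime to $N$; your decoupling makes that machinery unnecessary, and it also delivers assertion (6) cleanly once periodicity is in hand. The steps $(2)\Rightarrow(3)\Rightarrow(4)$, $(5)\Rightarrow(2)$, and the $\tau$-permutation argument for $(2)\Rightarrow(5)$ (which needs the small cardinality remark that $\tau(F)\supseteq F$ forces $\tau$ to permute the finite set $F$) are all sound.

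The genuine gap sits exactly where you predicted the ``arithmetic heart'' to be, namely $(4)\Rightarrow(5)$: the assertion that condition (4) makes ``every $\alpha_m$ a $\tau$-periodic point'' does not follow. Condition (4) constrains only $\alpha_0$, and a backward $\tau$-orbit through a periodic point need not consist of periodic points. Take $N=2$ and $\alpha=\left(0,\tfrac{1}{2},\tfrac{1}{4},\tfrac{1}{8},\ldots\right)\in\Xi_2$: then $(2^k-1)\alpha_0=0\in\Z$ for every $k\geq 1$, so (4) holds, yet the range is infinite, no two terms coincide, $\alpha$ is not periodic, and $S_\alpha=\{0\}$ by your own formula. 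So $(4)\Rightarrow(5)$ is false as stated and no proof of it can be completed; the hypothesis would need strengthening to something like ``$(N^k-1)\alpha_j\in\Z$ for all $j$,'' under which your denominator argument (all denominators coprime to $N$, hence constant along the backward orbit) does close the loop. Be aware that the paper's own proof of this implication contains the same unjustified step --- it asserts without proof that $\alpha_{j+k}$ has the same denominator as $\alpha_j$ --- and the paper's remark immediately following the theorem (the aperiodic rational-valued $\alpha$ with trivial symmetrizer) is essentially the counterexample above. This is a defect of the statement rather than something your argument could have repaired; everything else in your proposal is correct.
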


\begin{proof}
Let us assume that $s_\alpha$ is nontrivial and prove that the range of $\alpha$ is finite. The result is trivial if $\alpha=(0)_{n\in\N}$, so we assume that there exists $s\in\N$ such that $\alpha_s\not=0$. By definition of $\Xi_\Lambda$, we then have $\alpha_n\not=0$ for all $n\geq s$.

Let $\Theta_\alpha: (x,y)\in\Q_N^2 \mapsto \Psi_\alpha(x,y)\Psi_\alpha(y,x)^{-1}$. Now, given $p_1,p_2,p_3,p_4\in\Z$ and $k_1,k_2,k_3,k_4\in\N$, we have $\Theta_\alpha\left(\left(\frac{p_1}{N^{k_1}},\frac{p_2}{N^{k_2}}\right),\left(\frac{p_3}{N^{k_3}},\frac{p_4}{N^{k_4}}\right)\right)$ given by:
\[
\exp\left(2i\pi\left(\alpha_{(k_1+k_4)}p_1p_4 - \alpha_{(k_2+k_3)}p_2p_3 \right)\right)\text{.}
\]
The symmetrizer group $s_\alpha$ is now given by:
\[
\left\{ g = \left( \frac{p_1}{N^{k_1}},\frac{p_2}{N^{k_2}}\right) \in \Q_N^2 : 
\Theta_\alpha(g,\cdot) = 1 \right\}\text{.}
\]

Fix $\left(\frac{n}{N^{k_1}},\frac{m}{N^{k_2}}\right)\in \mathcal{S}_\alpha$, so that for all $\left(\frac{p_3}{N^{k_3}},\frac{p_4}{N^{k_4}}\right)\in\Q_N^2$ we have:
\[
\Theta_\alpha\left(\left(\frac{n}{N^{k_1}},\frac{m}{N^{k_2}}\right), \left(\frac{p_3}{N^{k_3}},\frac{p_4}{N^{k_4}}\right)\right)=1\text{.}
\]
Then, by Theorem (\ref{multipliers}), for all $p_3,p_4\in\Z$ and $k_3,k_4\in\N$:
\begin{equation} \label{dioph_condition_0}
\alpha_{(k_1+k_4)}np_4 \equiv \alpha_{(k_2+k_3)}mp_3 \mod \Z \text{.}
\end{equation}
Since Congruence (\ref{dioph_condition_0}) only depends on $k_1+k_4$ and must be true for all $k_4\in\N$, we can and shall henceforth assume that $k_1\geq s$. Without loss of generality, we assume $n\not=0$ (if $n=0$, then $m\not=0$ and the following argument can be easily adapted).

Denote by $\beta$ the unique extension of $\alpha$ in $\Xi_{\Lambda(N)}$ and denote $\Lambda(N)$ simply by $\Lambda$. Congruence (\ref{dioph_condition_0}) implies that for all $k_3,k_4\in\N$:
\begin{equation}\label{dioph_condition_1}
\beta_{\Omega(N)(k_1+k_4)}np_4 \equiv \beta_{\Omega(N)(k_2+k_3)}mp_3 \mod \Z \text{.}
\end{equation}
Since $\prod_{l=j}^{r-1}  \beta_{j+r} \equiv \Lambda_{j} \mod \Z$ for all $j,r\in\N,r>0$, we conclude that for any $k_3,k_4\in\N$ we have:
\begin{equation}
\beta_{\Omega(N)(k_1)+k_4}np_4 \equiv \beta_{\Omega(N)(k_2)+k_3}mp_3 \mod \Z \text{,}
\end{equation}
or, more generally, for any $l_1\geq\Omega(N)k_1$, we have:
\begin{equation}\label{dioph_condition_2}
\beta_{l_1+k_4}np_4 \equiv \beta_{\Omega(N)(k_2)+k_3}mp_3 \mod \Z \text{,}
\end{equation}
for all $k_3,k_4\in\N$. We shall now modify $\Lambda$ and $\beta$ so that we may assume that $n$ in Congruence (\ref{dioph_condition_2}) may be chosen so that $n$ is relatively prime with $N$.

To do so, we write $n=n_1 Q$ with $n_1\in\Z$ relatively prime with $N$ and the set of prime factors of $Q\in\N$ is a subset of the set of prime factors of $N$. Let $k\in\N$ be the smallest integer such that $Q$ divides $\pi_{k\Omega(N)}(\Lambda)$ and $k\geq k_1$. Such a natural number exists by definition of $Q$ and $\Lambda$. Let $j_1<j_2<\cdots<j_r\in\N$ such that $j_r<\Omega(N)k$ and $Q=\prod_{l=1}^r \Lambda_{j_l}$: such a choice of integers $j_1,\ldots,j_r$ exists by definition of $k$. We also note that $r=\Omega(Q)-1$. Let $z_1<z_2<\cdots<z_t\in\N$ be chosen so that:
\[
\{z_1,\ldots,z_t,j_1,\ldots,j_r \} = \{ 0,\ldots,\Omega(N)k-1 \}\text{.}
\] 
We now define the following permutation of $\N$:
\[
s \left| \begin{array}{ccl}
\N & \longrightarrow & \N \\
x & \longmapsto & \left\{ \begin{array}{ccl}
\Omega(N)k - l & \text{if} & x = j_l\\
l &\text{if}& x = z_l\\
x & \multicolumn{2}{l}{\text{otherwise.}}
\end{array}\right.
\end{array}\right.
\]

Let $\Lambda'\in\primeseq$ be defined by $\Lambda'_j = \Lambda_{s(j)}$ for all $j\in\N$. By construction, $\Lambda$ and $\Lambda'$ agree for indices greater or equal than $\Omega(N)k$. Let $\alpha$ be the unique sequence in $\Xi_{\Lambda'}$ such that $\alpha'_{k\Omega(N)+j} = \beta_{k\Omega(N)+j}$ for all $j\in\N$. By construction, for all $k_3,k_4\in\N$, we have:
\begin{equation}\label{dioph_condition_3}
\alpha'_{\Omega(N)k+k_4}np_4 \equiv \beta_{\Omega(N)(k_2)+k_3}mp_3 \mod \Z \text{.}
\end{equation}
Yet $n=n_1Q$ and by construction, $\alpha'_{\Omega(N)k+k_4}Q\equiv \alpha'_{\Omega(N)k+k_4-r}n_1 \mod \Z$.

Thus, we have shows that if $\mathcal{S}_\alpha$ is not trivial, then there exists $\Lambda'\in\primeseq$ and a supersequence $\alpha'\in\Xi_{\Lambda'}$ of (a truncated subsequence of) $\alpha$, as well as $n_1\in\Z$ with the set of prime factors of $n_1$ disjoint from the range of $\Lambda'$ and $k,k_2\in\N$, such that for all $j,j'\in\N$ and $p,q\in\Z$, we have:
\begin{equation}\label{dioph_condition}
\alpha'_{k+j} n_1 p \equiv \alpha'_{k_2+j'} mq \mod \Z \text{.}
\end{equation}

We now set $q=0$. This relation can only be satisfied if $\alpha'_k \in \Q$, in which equivalent to $\alpha'_j \in \Q$ for all $j\in\Q$ by definition of $\Xi_{\Lambda'}$. 
Since Congruence (\ref{dioph_condition}) implies that $\alpha'_k n \in \Z$, we write $\alpha'_k = \frac{a}{b}$ with for some $b\in\Z$ such that $b\mid n_1$ and $b\wedge a = 1$, where $a\in\{1,\ldots,b-1\}$.

Now, by definition of $\Xi_{\Lambda'}$, there exists $x\in \{0,\ldots,\Lambda'_k-1 \}$ such that:
\[
\alpha'_{k+1} = \frac{\alpha'_k+x}{\Lambda'_k} = \frac{a + xb}{b\Lambda'_k}\text{.}
\]

We now must have:
\[
\alpha'_{k+1} n_1 = \frac{a+bx}{\Lambda'_k}\frac{n_1}{b} \in \Z
\]
which implies $\frac{a+bx}{\Lambda'_k}\in\N$ since $\Lambda'_k$ and $n_1$ are relatively prime. Hence we have:
\[
\alpha'_{k+1} \in \left\{ \frac{1}{b},\ldots,\frac{b-1}{b}\right\} \text{.}
\]

By induction, using the same argument as above, we thus get that we must have:
\begin{equation}\label{alpha_condition}
\{\alpha'_{k+j}:j\in\N\} \subseteq \left\{ \frac{1}{b},\ldots,\frac{b-1}{b}\right\}\text{.}
\end{equation}
 Hence if $\mathcal{S}_\alpha$ is nontrivial, then $\alpha'$ (and therefore $\alpha$) must have finite range. 

\begin{remark}\label{inproofremark}
Condition (\ref{alpha_condition}) implies that in fact, there exists $b,k\in\N$ such that for all $n\geq k$, there exists $a\in\{1,\ldots,b-1\}$ with $a\wedge b =1$ such that $\alpha'_n = \frac{a}{b}$. Indeed, since $\alpha'$ has finite range, there exists $K\in\N$ such that $\alpha'_m$ occurs infinitely often in $\alpha'$ for all $m>K$. Let $r=\max\{K,k\}$ and write $\alpha'_r = \frac{a}{b}$ for some $a,b\in\N$ with $a\wedge b =1$. if for any $n > r$, we have $\alpha'_n=\frac{a}{b'}$ with $a\wedge b'=1$ and $b'\mid b$, then Condition (\ref{alpha_condition}) implies that $b' \alpha'_m\in \Z$ for all $m>n$. By assumption on $r$, $\alpha'_r$ occurs again for some $r'>n$. Condition (\ref{alpha_condition}) then implies that $b\mid b'$, so $b=b'$.
\end{remark}

\bigskip It is obvious that if $\alpha$ has finite range, then there exists $j<k$ such that $\alpha_j = \alpha_k$.

\bigskip Let us now prove that if $\alpha$ takes the same value at least twice, then there exists $k\in\N$ such that $(N^k-1)\alpha_0 \in \Z$. Thus there exist  $j,k\in\N$ such that $\alpha_{j+k} = \alpha_j$, yet by definition of $\Xi_N$ we have $N^k \alpha_{j+k} \equiv \alpha_j \mod \Z$, so $(N^k-1)\alpha_j \equiv 0 \mod \Z$, and since $(N^k-1)\alpha_0 \equiv N^j (N^k-1)\alpha_j \mod\Z \equiv 0 \mod \Z$ , we conclude that $(N^k-1)\alpha_0 \in \Z$.

\bigskip Let us now assume that there exists $k\in\N$ such that $(N^k-1)\alpha_0\in\Z$ and show that $\alpha$ is periodic. If $\alpha_j=\frac{a}{b}$ for some $j\in\N$ and some $a,b\in\N$ nonzero and relatively prime, then $\alpha_{k+j}=\frac{d}{b}$ for some $d\in\{1,\ldots,b-1\}$ and:
\[
N^k\alpha_{k+j} = \frac{N^kd}{b} = \frac{(N^k-1)d}{b} + \frac{d}{b} \equiv \frac{d}{b}\mod \Z\text{,} 
\]
while we must have $N^k \alpha_{k+j}\equiv \frac{a}{b} \mod \Z$, which implies $d=a$. Hence by induction, $\alpha_{k+j}=\alpha_j$  for all $j\in\N$, as desired.

\bigskip Let us assume that $\alpha$ is periodic, which of course implies $\alpha_0=\frac{a}{b}$ for some relatively prime $a,b\in\Z$, or $\alpha=0$. In the former case, we simply have:
\[
\Psi_\alpha\left(\left(\frac{n}{N^{k_1}},\frac{m}{N^{k_2}}\right),\left(\frac{p_2}{N^{k_3}},\frac{q_2}{N^{k_4}}\right)\right)
 = \exp\left(\frac{2i\pi}{b} a_{k_1+k_4} nq_2\right)
\]
where $\alpha_j = \frac{a_j}{b}$ for $a_j\in\{1,\ldots,b-1\}$ and all $j\in\N$, using Remark (\ref{inproofremark}). The computation of $\mathcal{S}_\alpha$ is now trivial. It is also immediate, of course, if $\alpha=0$.
In particular, this computation shows that $\mathcal{S}_\alpha$ is not trivial if $\alpha$ is periodic, which concludes our equivalence.
\end{proof}

\begin{remark}
We note that if the symmetrizer group of the multiplier $\Psi_\alpha$ for $\alpha\in\Xi_N$ is nontrivial, then $\alpha$ is rational valued. The converse is false, as it is easy to construct an aperiodic $\alpha\in\Xi_N$ which is rational valued: for instance, given any $N>1$ we can set $\alpha_n = \frac{1}{N^n}$ for all $n\in\N$. Then $s_{\alpha}=\{0\}$.
\end{remark}

\begin{example}
For an example of a periodic multiplier, one can choose $N=5$ and $\alpha = \left(\frac{1}{62},\frac{25}{62},\frac{5}{62},\frac{1}{62},\ldots\right)$. The symmetrizer group is then given by $$\left\{ \left(\frac{62n}{5^p},\frac{62m}{5^q}\right): n,m\in\Z, p,q\in\N \right\}\text{.}$$
\end{example}

\section{The Noncommutative Solenoid $C^\ast$-algebras}

We now start the analysis of the noncommutative solenoids, defined by:

\begin{definition}
Let $N\in\N$ with $N>1$ and let $\alpha\in\Xi_N$. Let $\Psi_\alpha$ be the skew bicharacter defined in Theorem (\ref{multipliers}). The twisted group C*-algebra $C^\ast(\Q_N^2,\Psi_\alpha)$ is called a \emph{noncommutative solenoid} and is denoted by $\algebra_\alpha$.
\end{definition}

The main purpose of this and the next section is to provide a classification result for noncommutative solenoids based upon their defining multipliers. The key ingredient for this analysis is the computation of the $K$-theory of noncommutative solenoids, which will occupy most of this section. However, we start with a set of basic properties one can read about noncommutative solenoids from their defining multipliers.

It is useful to introduce the following notations, and provide an alternative description of our noncommutative solenoids.

\begin{notation}\label{Wunitaries}
Let $\alpha\in\Xi_N$ for some $N\in\N,N>1$. By definition, $\algebra_\alpha$ is the universal C*-algebra for the relations 
\[
W_{\frac{p_1}{N^{k_1}}, \frac{p_2}{N^{k_2}}} W_{\frac{p_3}{N^{k_3}}, \frac{p_4}{N^{k_4}}} = \Psi_\alpha\left(\left( \frac{p_1}{N^{k_1}}, \frac{p_2}{N^{k_2}} \right),\left( \frac{p_3}{N^{k_3}}, \frac{p_4}{N^{k_4}} \right)\right) W_{\frac{p_1}{N^{k_1}} +  \frac{p_3}{N^{k_3}}, \frac{p_2}{N^{k_2}} +  \frac{p_4}{N^{k_4}}}
\]
where $W_{x,y}$ are unitaries for all $(x,y)\in\Q_N^2$, and $p_1,p_2,p_3,p_4\in\Z$ and $k_1,k_2,k_3,k_4\in\N$. 
\end{notation}

\begin{proposition}\label{crossed-product}
Let $N\in\N,N>1$ and $\alpha\in\Xi_N$. Let $\theta^\alpha$ be the action of $\Q_N$ on $\solenoid_N$ defined by:
\[
\theta^\alpha_{\frac{p}{N^k}} \left((z_n)_{n\in\N} \right) = \left(\exp(2i\pi\alpha_{k+n} p) z_n \right)_{n\in\N}\text{.}
\]
The C*-crossed-product $C(\solenoid_N)\rtimes_{\theta^\alpha} \Q_N$ is *-isomorphic to $\algebra_\alpha$.
\end{proposition}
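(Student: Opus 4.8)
The plan is to exhibit both algebras as universal C*-algebras for the same covariant data and then to match generators explicitly. Using the presentation of $\algebra_\alpha$ from Notation \ref{Wunitaries}, I set $U_x = W_{x,0}$ and $V_y = W_{0,y}$ for $x,y\in\Q_N$. Since the exponent in the formula for $\Psi_\alpha$ given in Theorem \ref{multipliers} involves only the first-coordinate numerator of its left argument and the second-coordinate numerator of its right argument, the multiplier restricts trivially to each of the subgroups $\Q_N\times\{0\}$ and $\{0\}\times\Q_N$. Hence $x\mapsto U_x$ and $y\mapsto V_y$ are honest unitary representations of the discrete group $\Q_N$. By Pontryagin duality together with Proposition \ref{solenoids}, the representation $y\mapsto V_y$ generates a copy of $C^\ast(\Q_N)\cong C(\widehat{\Q_N}) = C(\solenoid_N)$, under which $V_{p/N^k}$ is identified with the continuous function $z=(z_n)_{n\in\N}\mapsto z_k^p$ on $\solenoid_N$.

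Next I would compute the commutation relation between the two representations. Directly from the defining relation and the form of $\Psi_\alpha$ one gets, for $x=p/N^k$ and $y=q/N^\ell$,
\[
U_x V_y U_x^\ast = \exp\!\left(2i\pi\,\alpha_{k+\ell}\,pq\right) V_y\text{,}
\]
because $V_y U_x = W_{x,y}$ while $U_x V_y = \exp(2i\pi\alpha_{k+\ell}pq)W_{x,y}$. Transported through the identification $C^\ast(\{V_y\})\cong C(\solenoid_N)$, the inner automorphism $\mathrm{Ad}\,U_x$ sends the function $z\mapsto z_\ell^q$ to $z\mapsto \exp(2i\pi\alpha_{k+\ell}pq)z_\ell^q$, which is exactly the automorphism of $C(\solenoid_N)$ induced by $\theta^\alpha_x$ under the orientation convention $(\theta^\alpha_x f)(z)=f(\theta^\alpha_x z)$. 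Thus $(V_\bullet,U_\bullet)$ is a covariant representation of the system $(C(\solenoid_N),\Q_N,\theta^\alpha)$ inside $\algebra_\alpha$, and the universal property of the crossed product yields a $\ast$-homomorphism $\Phi\colon C(\solenoid_N)\rtimes_{\theta^\alpha}\Q_N\to\algebra_\alpha$ carrying each canonical unitary $u_x$ to $U_x$ and the copy of $C(\solenoid_N)$ onto $C^\ast(\{V_y\})$.

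For the reverse direction I would set $\widetilde{W}_{x,y}=\iota(V_y)\,u_x$ in the crossed product, where $\iota$ denotes the embedding of $C(\solenoid_N)$ and the $u_x$ are the implementing unitaries, so that $u_x\iota(f)u_x^\ast=\iota(\theta^\alpha_x f)$. Writing $x=\frac{p_1}{N^{k_1}}$, $y=\frac{p_2}{N^{k_2}}$, $x'=\frac{p_3}{N^{k_3}}$, $y'=\frac{p_4}{N^{k_4}}$ and using this covariance together with the fact that $\theta^\alpha_x$ scales $V_{y'}$ by $\exp(2i\pi\alpha_{k_1+k_4}p_1p_4)$, a short computation gives
\[
\widetilde{W}_{x,y}\,\widetilde{W}_{x',y'} = \Psi_\alpha((x,y),(x',y'))\,\widetilde{W}_{x+x',\,y+y'}\text{,}
\]
i.e. $(x,y)\mapsto\widetilde{W}_{x,y}$ is a $\Psi_\alpha$-projective representation of $\Q_N^2$. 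The universal property of $\algebra_\alpha=C^\ast(\Q_N^2,\Psi_\alpha)$ then produces a $\ast$-homomorphism $\Psi\colon\algebra_\alpha\to C(\solenoid_N)\rtimes_{\theta^\alpha}\Q_N$ with $\Psi(W_{x,y})=\widetilde{W}_{x,y}$. Evaluating $\Phi$ and $\Psi$ on the generators $W_{x,0},W_{0,y}$ on one side and $u_x,\iota(V_y)$ on the other shows that both composites fix a generating set, so $\Phi$ and $\Psi$ are mutually inverse $\ast$-isomorphisms.

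The only real difficulty is bookkeeping with orientation conventions: the ordering of $\iota(V_y)$ and $u_x$ in $\widetilde{W}_{x,y}$ together with the choice of how the action of $\Q_N$ on $\solenoid_N$ induces automorphisms of $C(\solenoid_N)$ must be pinned down so that the cross term reproduces $\Psi_\alpha$ rather than its conjugate. Should the alternative convention $(\theta^\alpha_x f)(z)=f(\theta^\alpha_{-x}z)$ be preferred, one obtains $\Psi_{-\alpha}$ instead, but this is harmless: the automorphism $(x,y)\mapsto(x,-y)$ of $\Q_N^2$ carries $\Psi_\alpha$ to $\Psi_\alpha^{-1}$, so $\algebra_\alpha\cong C^\ast(\Q_N^2,\Psi_{-\alpha})$, and precomposing with it restores the identification. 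Everything else --- that the factor maps are genuine unitary representations, that $V_\bullet$ generates all of $C(\solenoid_N)$, and the two multiplier identities --- is routine once the convention is fixed.
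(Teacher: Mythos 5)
Your proposal is correct and follows essentially the same route as the paper: both sides are compared via their universal properties, with the covariant pair $(V_\bullet,U_\bullet)$ inside $\algebra_\alpha$ giving one map and the $\Psi_\alpha$-projective family $\widetilde{W}_{x,y}=\iota(V_y)u_x$ giving the inverse. Your version merely fills in the details the paper leaves implicit (the triviality of $\Psi_\alpha$ on the coordinate subgroups, the explicit check that $\mathrm{Ad}\,U_x$ implements $\theta^\alpha_x$, and the orientation bookkeeping), all of which check out.
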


\begin{proof}
The C*-algebra $C(\solenoid_N)$ of continuous functions on $\solenoid_N$ is the group C*-algebra of the dual of $\solenoid_N$, i.e. it is generated by unitaries $U_p$ for $p\in\Q_N$ such that $U_p U_{p'} = U_{p+p'}$. Equivalently, it is the universal C*-algebra generated by unitaries $u_n$ such that $u_{n+1}^N=u^n$, with the natural *-isomorphism $\varphi$ extending $\left(\forall n \in \N \;\; u_n\mapsto U_{\frac{1}{N^n}}\right)$.

\bigskip The C*-crossed-product $C(\solenoid_N)\rtimes_{\theta^\alpha}\Q_N$ is generated by a copy of $C(\solenoid_N)$ and unitaries $V_q$, for $q\in\Q_N$, such that $V_q u_n V_q^\ast = \theta^\alpha_{\frac{1}{N^q}}\left(\frac{1}{N^n}\right) u_n$. Thus:
\begin{eqnarray*}
V_{\frac{p_1}{N^{k_1}}} U_{\frac{p_2}{N^{k_2}}} &=& \theta^\alpha_{\frac{p_1}{N^{k_1}}}\left(\frac{p_2}{N^{k_2}}\right) U_{\frac{p_2}{N^{k_2}}} V_{\frac{p_1}{N^{k_1}}}\\
 &=& \exp(2i\pi\alpha_{\delta(N^{k_1})+\delta(N^{k_2})}p_1p_2)\left(\frac{p_2}{N^{k_2}}\right) U_{\frac{p_2}{N^{k_2}}} V_{\frac{p_1}{N^{k_1}}}
\end{eqnarray*}
for all $p_1,p_2\in\Z$ and $k_1,k_2\in\N$. 
\bigskip Now, the following map (using Notation (\ref{Wunitaries})):
\[
\forall p\in\Z, k\in\N\;\;
\left\{
\begin{array}{lcr}
U_{\frac{p}{N^k}} &\longmapsto& W_{0,\frac{p}{N^k}}\\
V_{\frac{p}{N^k}} &\longmapsto& W_{\frac{p}{N^k},0}
\end{array}
\right.
\]
can be extended into a *-epimorphism using the universal property of $C(\solenoid_N)\rtimes_{\theta^\alpha} \Q_N$. The universal property of $\algebra_\alpha$ implies that this *-morphism is a *-isomorphism, by showing the inverse of this *-epimorphism is a well-defined *-epimorphism.
\end{proof}

\begin{remark}
Let $N\in\N,N>1$ and $\alpha\in\Xi_N$. The action $\theta$ of $\Q_N$ on $\solenoid_N$ defined in Proposition (\ref{crossed-product}) is minimal if and only if $\alpha$ is irrational-valued. However, if $\alpha$ has infinite range, the orbit space of $\theta$ is still a single topological point.
\end{remark}

\bigskip We start our study of noncommutative solenoids by establishing when these C*-algebras are simple:

\begin{theorem}\label{simplicity}
Let $N\in\N$ with $N>1$. Let $\alpha\in\Xi_N$. The following statements are equivalent:
\begin{enumerate}
\item The C*-algebra $\algebra_\alpha$ is simple,
\item The set $\{\alpha_n : n \in \N\}$ is infinite,
\item For all $k\in\N$ with $k>0$, we have $(N^k-1)\alpha_0\not\in\Z$,
\item Given any $j,k\in\N$ with $j\not=k$ we have $\alpha_j \not= \alpha_k$.
\end{enumerate}
\end{theorem}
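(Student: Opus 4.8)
The plan is to reduce the entire statement to the computation already carried out in Theorem \ref{symmetrizer} together with the standard simplicity criterion for twisted group $C^\ast$-algebras of discrete abelian groups. First I would observe that the three conditions (2), (3), (4) stated here are precisely the logical negations of conditions (2), (4), (3) of Theorem \ref{symmetrizer} (reading the ``$(N^k-1)\alpha_0\in\Z$'' condition there with $k>0$, as is forced by the periodicity argument in its proof). Since Theorem \ref{symmetrizer} already establishes the mutual equivalence of those conditions and their equivalence with the \emph{nontriviality} of the symmetrizer group $\mathcal{S}_\alpha$, passing to contrapositives yields at once
\[
(2)\Longleftrightarrow(3)\Longleftrightarrow(4)\Longleftrightarrow\bigl(\mathcal{S}_\alpha=\{0\}\bigr),
\]
so no new combinatorial work is needed for the equivalences not involving simplicity.

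The substantive step is to connect statement (1) with the triviality of $\mathcal{S}_\alpha$. Here I would invoke the fundamental principle, valid for any countable discrete abelian group $G$ with multiplier $\sigma$, that $C^\ast(G,\sigma)$ is simple if and only if the $\sigma$-symmetrizer subgroup is trivial. Specialized to $G=\Q_N^2$ and $\sigma=\Psi_\alpha$, whose symmetrizer is exactly the group $\mathcal{S}_\alpha$ introduced in Theorem \ref{symmetrizer}, this gives $(1)\Longleftrightarrow(\mathcal{S}_\alpha=\{0\})$ and closes the circle with the previous paragraph.

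The main obstacle is justifying that simplicity criterion at the $C^\ast$-level, since Kleppner's results concern the factoriality of the twisted group \emph{von Neumann} algebra and must be upgraded. The easy direction is direct: if $g\in\mathcal{S}_\alpha$ is nonzero, then by definition $\Psi_\alpha(g,h)\Psi_\alpha(h,g)^{-1}=1$ for all $h$, so the generating unitary $W_g$ (Notation \ref{Wunitaries}) commutes with every $W_h$ and hence is a nonscalar central element; a unital $C^\ast$-algebra with center strictly larger than $\C$ admits a proper nonzero ideal, so $\algebra_\alpha$ is not simple. For the converse I would exploit the crossed-product picture of Proposition \ref{crossed-product}, $\algebra_\alpha\cong C(\solenoid_N)\rtimes_{\theta^\alpha}\Q_N$, and show that when $\alpha$ has infinite range the action $\theta^\alpha$ is both free and minimal: $\theta^\alpha$ is translation by the image of the homomorphism $\tfrac{p}{N^k}\mapsto(\exp(2i\pi\alpha_{k+n}p))_{n\in\N}$ of $\Q_N$ into $\solenoid_N$, and one checks that this image is dense, and the homomorphism injective, exactly when $\alpha$ has infinite range — a denominator analysis entirely parallel to the one in the proof of Theorem \ref{symmetrizer}, and the content underlying the remark following Proposition \ref{crossed-product}. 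Since $\Q_N$ is abelian, hence amenable, the full and reduced crossed products coincide, and the classical simplicity theorem for crossed products by minimal, topologically free actions of discrete amenable groups then gives simplicity of $\algebra_\alpha$. Verifying freeness and minimality in terms of the range of $\alpha$ is where the real work lies, but it dovetails precisely with the analysis already performed for the symmetrizer group.
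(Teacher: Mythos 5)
Your proposal is correct and follows essentially the same route as the paper: the paper likewise derives the equivalence of (2), (3), (4) with triviality of $\mathcal{S}_\alpha$ by negating Theorem \ref{symmetrizer} (including the implicit restriction to $k>0$ that you correctly flag), and then obtains $(1)\Leftrightarrow(\mathcal{S}_\alpha=\{0\})$ from the free-and-minimal criterion for simplicity, citing \cite[Theorem 1.5]{Packer92} rather than re-deriving it. The only difference is that you sketch a proof of that criterion (central unitary $W_g$ in one direction, the crossed-product picture of Proposition \ref{crossed-product} with amenability of $\Q_N$ in the other), which the paper simply outsources to the reference.
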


\begin{proof}
The symmetrizer group $\mathcal{S}_\alpha$ of $\Psi_\alpha$ is trivial if and only if the asserted condition holds, by Theorem (\ref{symmetrizer}). Since $\Q_\Lambda^2$ is Abelian, and since the dual of $\mathcal{S}_\alpha$ is trivial, the action of $\Q_\Lambda^2/Q_\Lambda^2$ on $\widehat{\mathcal{S}_\alpha}$ is free and minimal. Thus $\algebra_\alpha$ is simple by \cite[Theorem 1.5]{Packer92}.
\end{proof}

As our next observation, we note that noncommutative solenoids carry a trace, which will be a useful tool for their classification.

\begin{theorem}\label{traces}
Let $N\in\N,N>1$ and $\alpha\in\Xi_N$. The C*-algebra $\algebra_\alpha$ has an invariant tracial state for the dual action of $\solenoid_N^2$. Moreover, if $\algebra_\alpha$ is simple, then this is the only tracial state of $\algebra_\alpha$.
\end{theorem}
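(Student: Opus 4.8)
The plan is to produce the required trace explicitly as the canonical trace of the twisted group C*-algebra, and then to use the triviality of the symmetrizer group in the simple case to force uniqueness.

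\textbf{Existence.} First I would work on the dense $*$-subalgebra $\C[\Q_N^2,\Psi_\alpha]$ spanned by the unitaries $W_g$ of Notation (\ref{Wunitaries}), and define $\tau$ by $\tau\!\left(\sum_g c_g W_g\right)=c_0$, the coefficient of $W_0$. To see that $\tau$ is a state, I would realize it as the vector state $a\mapsto\langle a\delta_0,\delta_0\rangle$ in the left regular $\Psi_\alpha$-representation on $\ell^2(\Q_N^2)$; since $\Q_N^2$ is abelian, hence amenable, the full and reduced twisted group C*-algebras coincide, so this is a well-defined positive functional of norm one on $\algebra_\alpha$. The tracial identity $\tau(W_gW_h)=\tau(W_hW_g)$ is nonzero only when $h=-g$, where it reduces to the equality $\Psi_\alpha(g,-g)=\Psi_\alpha(-g,g)$; this is immediate from the explicit formula in Theorem (\ref{multipliers}), both sides equalling $\exp(-2i\pi\alpha_{k_1+k_2}p_1p_2)$ for $g=(p_1/N^{k_1},p_2/N^{k_2})$. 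Finally, invariance under the dual action is automatic: writing $\hat\delta_\chi(W_g)=\chi(g)W_g$ for $\chi\in\solenoid_N^2=\widehat{\Q_N^2}$, one has $\tau(\hat\delta_\chi(W_g))=\chi(g)\,\delta_{g,0}=\delta_{g,0}=\tau(W_g)$.

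\textbf{Uniqueness in the simple case.} Let $\sigma$ be an arbitrary tracial state. The key computation is that for any $g,h\in\Q_N^2$ the commutation relation gives $W_hW_gW_h^\ast=\Theta_\alpha(h,g)\,W_g$, where $\Theta_\alpha(h,g)=\Psi_\alpha(h,g)\Psi_\alpha(g,h)^{-1}$ is the skew-symmetrized bicharacter. Applying the trace together with the identity $\sigma(W_hW_gW_h^\ast)=\sigma(W_gW_h^\ast W_h)=\sigma(W_g)$ yields $\sigma(W_g)=\Theta_\alpha(h,g)\sigma(W_g)$, whence $\sigma(W_g)=0$ as soon as some $h$ satisfies $\Theta_\alpha(h,g)\neq1$, i.e. as soon as $g\notin\mathcal{S}_\alpha$. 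When $\algebra_\alpha$ is simple, Theorem (\ref{simplicity}), via Theorem (\ref{symmetrizer}), tells us that $\mathcal{S}_\alpha$ is trivial, so every $g\neq0$ admits such an $h$; therefore $\sigma(W_g)=0=\tau(W_g)$ for all $g\neq0$, so $\sigma=\tau$ on the dense subalgebra and hence everywhere by continuity.

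\textbf{Main obstacle.} The only genuinely analytic point is the verification that the canonical coefficient functional is positive and bounded, which I handle through the regular-representation realization together with amenability; the tracial property, the dual-invariance, and the entire uniqueness argument are then purely algebraic manipulations with the $W_g$. The crucial structural input for uniqueness, namely the equivalence between simplicity and triviality of the symmetrizer group, has already been established, so no new hard work is needed there. As an alternative existence proof worth recording, one may instead integrate the canonical conditional expectation of the crossed product $C(\solenoid_N)\rtimes_{\theta^\alpha}\Q_N$ of Proposition (\ref{crossed-product}) against the $\theta^\alpha$-invariant Haar measure of $\solenoid_N$, the invariance of the measure following from the fact that each $\theta^\alpha_{p/N^k}$ acts by translation on the compact group $\solenoid_N$.
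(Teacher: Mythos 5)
Your proof is correct, and your uniqueness argument is in substance the same as the paper's: the paper also observes that any tracial state $\tau'$ satisfies $\tau'(W_gW_h)=\Psi_\alpha^{(2)}(g,h)\tau'(W_hW_g)$ (where $\Psi_\alpha^{(2)}$ is the skew-symmetrization, your $\Theta_\alpha$) and concludes via the triviality of the symmetrizer group from Theorems (\ref{simplicity}) and (\ref{symmetrizer}); your phrasing through $W_hW_gW_h^\ast=\Theta_\alpha(h,g)W_g$ is if anything a cleaner rendering of the same computation. Where you genuinely diverge is existence: the paper does not construct the trace at all, but invokes the theorem of H{\o}egh-Krohn, Landstad and St{\o}rmer that an ergodic, strongly continuous action of a compact group (here the dual action of $\solenoid_N^2$) admits an invariant tracial state. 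Your route --- defining the coefficient functional $\tau(\sum c_gW_g)=c_0$ as a vector state in the left regular $\Psi_\alpha$-representation, using amenability of $\Q_N^2$ to identify the full and reduced twisted group C*-algebras, and checking traciality via $\Psi_\alpha(g,-g)=\Psi_\alpha(-g,g)$ --- is more explicit and self-contained, and has the side benefit of exhibiting the invariant trace concretely (it is exactly the functional that kills all $W_g$, $g\neq 0$), which makes the uniqueness step transparent. The trade-off is that you need the amenability/coincidence-of-completions input, which the paper's citation sidesteps. Both arguments are sound.
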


\begin{proof}
\bigskip For any $\alpha\in\Xi_N$ for $N\in\N,N>1$, the group $\solenoid_N^2$ acts ergodically and strongly continuously on $\algebra_\alpha$ by setting, for all $(z,w)\in\solenoid_N$ and $(x,y)\in\Q_N^2$:
\[
(z,w)\cdot W_{x,y} = \left< z,x \right>\left<w,y\right> W_{x,y}
\]
and extending $\cdot$ by universality of $\algebra_\alpha$, using Notation (\ref{Wunitaries}). This is of course the dual action of $\solenoid_N^2$ on $C^\ast(\Q_N^2,\Psi_\alpha)$. Since $\solenoid_N^2$ is compact, the existence of an invariant tracial state $\tau$ is due to \cite{Hoegh-Krohn81}. Moreover, $\algebra_\alpha$ is simple if and only if $\Psi_\alpha^2(g,\cdot)=1$ only for $g=0$, by Theorem (\ref{simplicity}). If $\tau'$ is any tracial state on $\algebra_\alpha$, we must have (using Notation (\ref{Wunitaries})):
\[
\tau'(W_g W_h) = \Psi_\alpha^2(g,h) \tau'(W_h W_g)
\]
for all $g,h\in\Q_N^2$. Hence if $\algebra_\alpha$ is simple, we have $\tau(W_g W_h) = 0$ for all $g,h\in\Q_N^2$, except for $h\in\{g,g^{-1}\}$. So $\ker\tau=\ker\tau'$ and $\tau(1)=1=\tau'(1)$, so $\tau=\tau'$ as desired.
\end{proof}

As our next observation, the C*-algebras $\algebra_\alpha$ ($\alpha\in\Xi_N,N\in\N,N>1$) are inductive limit of rotation algebras. Rotation C*-algebras have been extensively studied, with \cite{Rieffel81,Elliott93b} being a very incomplete list of references. We recall that given $\theta\in[0,1)$, the rotation C*-algebra $A_\theta$ is the universal C*-algebra for the relation $VU=\exp(2i\pi\theta)UV$ with $U,V$ unitaries. It is the twisted group C*-algebra $C^\ast(\Z^2,\Theta)$ where $\Theta((n,m),(p,q))=\exp(i\pi\theta(nq-mp))$. The unitaries associated to $(1,0)$ and $(0,1)$ in $C^\ast(\Z^2,\Theta)$ will be denoted by $U_\theta$ and $V_\theta$ and referred to as the canonical unitaries of $A_\theta$. Of course, $\{U_\theta,V_\theta\}$ is a minimal generating set of $A_\theta$. We now have:

\begin{theorem}\label{ATorus}
Let $N\in\N$ with $N>1$ and $\alpha\in\Xi_\N$. For all $n\in\N$, let $\varphi_n$ be the unique *-morphism from $A_{\alpha_{2n}}$ into $A_{\alpha_{2n+2}}$ extending:
\[
\left\{
\begin{array}{lcr}
U_{\alpha_{2n}} &\longmapsto& U_{\alpha_{2n+2}}^N\\
V_{\alpha_{2n}} &\longmapsto& V_{\alpha_{2n+2}}^N\\
\end{array}
\right.
\]
Then:
\[
\begin{CD}
A_{\alpha_0} @>\varphi_0>> A_{\alpha_2} @>\varphi_1>> A_{\alpha_4} @>\varphi_2>> \cdots
\end{CD}
\]
converges to $\algebra_\alpha$, where $A_\theta$ is the rotation C*-algebra for the rotation of angle $2i\pi\theta$.
\end{theorem}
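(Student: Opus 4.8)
The plan is to exhibit $\algebra_\alpha$ as the closure of an increasing union of rotation subalgebras, each isomorphic to some $A_{\alpha_{2n}}$, and then to identify this union with the inductive limit in the statement. First I would introduce, for each $n\in\N$, the two unitaries $V^{(n)} = W_{\frac{1}{N^n},0}$ and $U^{(n)} = W_{0,\frac{1}{N^n}}$ of $\algebra_\alpha$, using Notation (\ref{Wunitaries}). Feeding the multiplier formula of Theorem (\ref{multipliers}) into the commutation relation of these two unitaries yields $V^{(n)}U^{(n)} = \exp(2i\pi\alpha_{2n})U^{(n)}V^{(n)}$, where the subscript $2n$ is forced because the formula reads off the entry $\alpha_{(k_1+k_4)}$ with $k_1=k_4=n$. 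By the universal property of $A_{\alpha_{2n}}$ there is then a *-homomorphism $\psi_n\colon A_{\alpha_{2n}}\to\algebra_\alpha$ sending $V_{\alpha_{2n}}\mapsto V^{(n)}$ and $U_{\alpha_{2n}}\mapsto U^{(n)}$; I denote its image by $A^{(n)}$.

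Next I would establish the compatibility $\psi_{n+1}\circ\varphi_n = \psi_n$ and, simultaneously, the existence and uniqueness of $\varphi_n$ asserted in the statement. Since $\frac{1}{N^n}=\frac{N}{N^{n+1}}$ and the scalars occurring when multiplying unitaries supported on a single coordinate are trivial, one gets $(U^{(n+1)})^N = U^{(n)}$ and $(V^{(n+1)})^N = V^{(n)}$. On the other hand, the defining relation of $\Xi_N$ gives $N^2\alpha_{2n+2}\equiv\alpha_{2n}\mod\Z$, which is precisely the identity making $U_{\alpha_{2n+2}}^N$ and $V_{\alpha_{2n+2}}^N$ satisfy the $\alpha_{2n}$-rotation relation; hence $\varphi_n$ is well defined and unique, and evaluating on generators shows $\psi_{n+1}\circ\varphi_n=\psi_n$. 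The universal property of the inductive limit then produces a *-homomorphism $\Phi\colon\varinjlim(A_{\alpha_{2n}},\varphi_n)\to\algebra_\alpha$. Surjectivity of $\Phi$ is routine: every generator $W_{\frac{p_1}{N^{k_1}},\frac{p_2}{N^{k_2}}}$ is a scalar multiple of $(V^{(k_1)})^{p_1}(U^{(k_2)})^{p_2}$, and each $V^{(k)},U^{(k)}$ is an $N$-th power of $V^{(k+1)},U^{(k+1)}$, so $\bigcup_n A^{(n)}$ contains all the generators and its closure, the range of $\Phi$, is all of $\algebra_\alpha$.

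The main obstacle is injectivity of $\Phi$, which I would settle with faithful traces. Each connecting map $\varphi_n$ is injective because pulling the canonical (faithful) trace of $A_{\alpha_{2n+2}}$ back along $\varphi_n$ annihilates every monomial $U^aV^b$ with $(a,b)\neq(0,0)$ and therefore coincides with the canonical faithful trace of $A_{\alpha_{2n}}$; a *-homomorphism carrying a faithful trace back to a faithful trace is injective. The identical computation applies to each $\psi_n$, now using the $\solenoid_N^2$-invariant trace $\tau$ on $\algebra_\alpha$ furnished by Theorem (\ref{traces}): invariance forces $\tau(W_g)=0$ for $g\neq 0$, so $\tau\circ\psi_n$ is again the canonical faithful trace on $A_{\alpha_{2n}}$, whence $\psi_n$ is injective. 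The delicate point is that $\algebra_\alpha$ need not be simple and $\tau$ need not be globally faithful; what matters is only that its pullback to each finite building block $A^{(n)}$ be faithful, and the monomial computation guarantees exactly this.

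Finally, since injective *-homomorphisms between C*-algebras are isometric, all the $\psi_n$ and all composites $\varphi_{m-1}\circ\cdots\circ\varphi_n$ are isometric, so the canonical embeddings of the $A_{\alpha_{2n}}$ into the inductive limit are isometric and $\Phi$ is isometric on the dense union of their images. Thus $\Phi$ is injective, and being also surjective it is a *-isomorphism, which is the desired conclusion.
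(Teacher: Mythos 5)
Your construction is the same as the paper's: you build the compatible family of morphisms $\psi_n\colon A_{\alpha_{2n}}\to\algebra_\alpha$ sending the canonical unitaries to $W_{\frac{1}{N^n},0}$ and $W_{0,\frac{1}{N^n}}$, verify the commuting ladder, invoke the universal property of the inductive limit, and get surjectivity because the $W_{\frac{p_1}{N^{k_1}},\frac{p_2}{N^{k_2}}}$ all lie in $\bigcup_n\psi_n(A_{\alpha_{2n}})$ up to scalars. Where you go beyond the paper is the injectivity of the induced map $\Phi$: the paper's proof stops at ``$\algebra_\alpha$ is generated by $\bigcup_k\upsilon_k(A_{\alpha_{2k}})$, hence it is the limit,'' which only gives surjectivity, and appeals loosely to the universal property of $\algebra_\alpha$ for the reverse direction. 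Your trace argument fills this in cleanly: the $\solenoid_N^2$-invariant trace of Theorem (\ref{traces}) kills $W_g$ for $g\neq 0$, so its pullback along each $\psi_n$ is the canonical (faithful) trace of the rotation algebra $A_{\alpha_{2n}}$, forcing each $\psi_n$ to be injective hence isometric, and $\Phi$ to be isometric on a dense subalgebra. You are also right to flag that global faithfulness of $\tau$ is not needed (and may fail in the non-simple case); only faithfulness of $\tau\circ\psi_n$ on each finite stage matters. The one small point worth recording explicitly is the well-definedness of $\varphi_n$ itself, which you correctly reduce to $N^2\alpha_{2n+2}\equiv\alpha_{2n}\bmod\Z$, a consequence of applying the defining relation of $\Xi_N$ twice. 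Net effect: same route, but your version is the complete argument.
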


\begin{proof}
We use Notations (\ref{Wunitaries}). Consider the given sequence of irrational C*-algebra. Fix $k\in\N$. Define the map:
\[
\upsilon_k : \left\{ \begin{array}{ccc}

U_{\alpha_{2k}} &\mapsto& W_{\frac{1}{N^k},0}\\
V_{\alpha_{2k}} &\mapsto& W_{0,\frac{1}{N^k}}

\end{array}\right.
\]

By definition of $\Psi_\alpha$, we have $W_{0,\frac{1}{N^k}}W_{\frac{1}{N^k},0} = e^{2i\pi\alpha_{2k}} W_{\frac{1}{N^k},0}  W_{0,\frac{1}{N^k}}$.

By universality of $A_{\alpha_{2k}}$, the map $\upsilon_k$ extends to a unique *-morphism, which we still denote $\upsilon_k$, from $A_{\alpha_{2k}}$ into $\algebra_\alpha$. It is straightforward to check that the diagram:
\[
\begin{CD}
A_{\alpha_0} @>\varphi_0 >> A_{\alpha_2} @>\varphi_1>> A_{\alpha_4} @>\varphi_2>> \cdots\\
 @VV\upsilon_0 V @VV\upsilon_1V @VV\upsilon_2V \cdots\\
\algebra_\alpha @= \algebra_\alpha @= \algebra_\alpha @= \cdots
\end{CD}
\]
commute. So by universality of the inductive limit, there is a morphism from $\varinjlim (A_{\alpha_{2k}},\varphi_k)_{k\in\N}$ to $\algebra_\alpha$. Now, since $\algebra_\alpha$ is in fact generated by $\bigcup_{k\in\N}\upsilon_k(A_{\alpha_{2k}})$, we conclude that $\algebra_\alpha$ is in fact $\varinjlim (A_{\alpha_{2k}},\varphi_k)_{k\in\N}$, as desired.
\end{proof}

We can use Theorem (\ref{ATorus}) to compute the $K$-theory of the C*-algebras $\algebra_\alpha$ for $N\in\N,\alpha\in\Xi_N$.

\begin{theorem}\label{Ktheory}
Let $N\in\N$ with $N>1$, and let $\alpha\in\Xi_N$. Define the subgroup $\mathscr{K}_\alpha$ of $\Q_N^2$ by:
\[
\mathscr{K}_\alpha = \left\{ \left( z + \frac{pJ_k^\alpha}{N^{k}}, \frac{p}{N^{k}} \right) :z,p \in \Z, k \in \N \right\} 
\]
where $(J_k^\alpha)_{k\in\N} = (N^{k} \alpha_{k} - \alpha_0)_{k\in\N}$ and by convention, $J_k = 0$ for $k\leq 0$.
We then have:
\begin{equation*}
K_0(\algebra_\alpha)  = \mathscr{K}_\alpha \text{,} \; \text{and}\;
K_1(\algebra_\alpha) = \Q_N^2 \text{.}
\end{equation*}
Moreover, if $\tau$ is a tracial state of $\algebra_\alpha$, then we have:
\begin{equation}\label{K-traces}
K_0(\tau) : \left(z+\frac{pJ^\alpha_k}{N^{k}} ,\frac{p}{N^{k}}\right) \in \mathscr{K}_\alpha \longmapsto z + p\alpha_{k}\text{.}
\end{equation}
In particular, all tracial states of $\algebra_\alpha$ lift to the same state of $K_0(\algebra_\alpha)$ given by (\ref{K-traces}).
\end{theorem}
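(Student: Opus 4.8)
The plan is to leverage the inductive-limit description $\algebra_\alpha = \varinjlim(A_{\alpha_{2k}},\varphi_k)$ from Theorem (\ref{ATorus}) together with the continuity of $K$-theory, so that $K_i(\algebra_\alpha) = \varinjlim_k(K_i(A_{\alpha_{2k}}),(\varphi_k)_\ast)$ for $i=0,1$. I would begin by recalling that for \emph{every} $\theta$ (rational or irrational) the rotation algebra $A_\theta$ has $K_0(A_\theta)=\Z^2$, generated by $[1]$ and the class $[p_\theta]$ of a Rieffel projection with $\tau(p_\theta)=\theta$, while $K_1(A_\theta)=\Z^2$, generated by the classes $[U_\theta],[V_\theta]$ of the canonical unitaries. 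The computation then reduces to writing the integer matrices of the connecting maps $(\varphi_k)_\ast$ in these bases and identifying the two resulting inductive limits.

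The $K_1$ computation is immediate: since $\varphi_k$ sends $U_{\alpha_{2k}}\mapsto U_{\alpha_{2k+2}}^N$ and $V_{\alpha_{2k}}\mapsto V_{\alpha_{2k+2}}^N$, the induced map on $K_1=\Z[U]\oplus\Z[V]$ is multiplication by $N$ on each generator. Hence $K_1(\algebra_\alpha)=\varinjlim(\Z^2,N\cdot\mathrm{id})=\Q_N^2$, recovering the description of $\Q_N$ as $\varinjlim(\Z,\times N)$ from Sequence (\ref{Nadic-def}).

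The delicate point is $(\varphi_k)_\ast$ on $K_0$. As $\varphi_k(1)=1$ we have $[1]\mapsto[1]$, so the matrix is upper triangular and I must determine $a_k,b_k$ in $(\varphi_k)_\ast[p_{\alpha_{2k}}]=a_k[1]+b_k[p_{\alpha_{2k+2}}]$. I would use two invariants. First, the trace: one checks $\tau_{2k+2}\circ\varphi_k=\tau_{2k}$ (both are traces agreeing on every monomial $U^aV^b$), so applying $\tau$ gives $a_k+b_k\alpha_{2k+2}=\alpha_{2k}$; since membership in $\Xi_N$ forces $\alpha_{2k}\equiv N^2\alpha_{2k+2}\bmod\Z$, this already yields $a_k=\alpha_{2k}-N^2\alpha_{2k+2}\in\Z$ and $b_k=N^2$ whenever $\alpha_{2k+2}$ is irrational. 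To cover the rational case, where the trace no longer separates $[1]$ from $[p_\theta]$, I would pair with the canonical cyclic $2$-cocycle $c_2$ built from $\tau$ and the two dual-action derivations $\partial_1,\partial_2$; its pairing with $K_0$ is integer-valued with $\langle c_2,[1]\rangle=0$ and $\langle c_2,[p_\theta]\rangle=\pm 1$. Because $\varphi_k$ intertwines each $\partial_j$ up to the factor $N$, one obtains $\varphi_k^\ast c_2^{(2k+2)}=N^2 c_2^{(2k)}$, whence $b_k=N^2$ for \emph{all} $\theta$. I expect this cocycle computation to be the main obstacle, since it is the only step requiring genuine noncommutative-geometric input beyond the trace.

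With the connecting matrix $M_k=\left(\begin{smallmatrix}1 & a_k\\ 0 & N^2\end{smallmatrix}\right)$ in hand, I would finish by exhibiting an explicit isomorphism $\varinjlim(\Z^2,M_k)\cong\mathscr{K}_\alpha$. Setting $\phi_k(x,y)=\bigl(x+yJ^\alpha_{2k}/N^{2k},\,y/N^{2k}\bigr)$, the identity $J^\alpha_{2k}-J^\alpha_{2k+2}=N^{2k}a_k$ shows $\phi_{k+1}\circ M_k=\phi_k$, so these maps assemble to a homomorphism $\Phi$ into $\mathscr{K}_\alpha$; injectivity of each $\phi_k$ (and of $M_k$) gives injectivity of $\Phi$, while surjectivity follows because every element of $\mathscr{K}_\alpha$ can be rewritten with an even denominator exponent (replacing exponent $m$ by $m+1$ changes the first coordinate only by the integer $p c_m$). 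The trace formula (\ref{K-traces}) then drops out, since $K_0(\tau)\circ\iota_k$ is the level-$k$ trace $(x,y)\mapsto x+y\alpha_{2k}$, matching $z+p\alpha_{2k}$ under $\Phi$. Finally, the independence of $K_0(\tau)$ from the chosen tracial state reduces to the same statement for each $A_{\alpha_{2k}}$: for irrational $\alpha_{2k}$ the trace is unique, while for rational $\alpha_{2k}$ the algebra is a matrix bundle over $\T^2$ and the trace of a projection equals its constant fibrewise rank divided by the matrix size, independent of the underlying measure.
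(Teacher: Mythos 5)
Your proposal is correct and follows the same overall skeleton as the paper: realize $\algebra_\alpha$ as $\varinjlim(A_{\alpha_{2k}},\varphi_k)$ via Theorem (\ref{ATorus}), use continuity of $K$-theory, compute the connecting matrices (multiplication by $N$ on each $K_1$ generator, and an upper-triangular matrix $\left(\begin{smallmatrix}1 & a_k\\ 0 & N^2\end{smallmatrix}\right)$ on $K_0$), and then exhibit an explicit cone of maps identifying the $K_0$ inductive limit with $\mathscr{K}_\alpha$; your $\phi_k$ is, up to a sign convention, the paper's $\upsilon_k$, and your bookkeeping identity $J^\alpha_{2k}-J^\alpha_{2k+2}=N^{2k}a_k$ with $a_k=\alpha_{2k}-N^2\alpha_{2k+2}$ is the correct (self-consistent) version. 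The one place where you genuinely diverge is in pinning down the $K_0$ connecting map: the paper does this by writing a Powers--Rieffel projection explicitly as $g(U_k)V_k+f(U_k)+h(U_k)V_k^\ast$, observing that $\varphi_k$ sends it to another projection of the same trace $\alpha_{2k}=\int_{\T}f$, and reading off the matrix from the trace; you instead combine the trace with the pairing against the canonical cyclic $2$-cocycle (the Chern-number pairing), using $\varphi_k^\ast c_2 = N^2 c_2$ to force $b_k=N^2$ even when $\alpha_{2k+2}$ is rational. Your route costs an extra piece of noncommutative-geometric machinery but buys something real: for rational angles the trace is not injective on $K_0(A_\theta)\cong\Z^2$, so the paper's trace-only bookkeeping does not by itself determine the class of the image projection there, whereas your second invariant closes that case cleanly (the paper's argument can also be completed by tracking the explicit form of the image projection, but it does not spell this out). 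Your treatment of $K_1$, of surjectivity onto $\mathscr{K}_\alpha$ by passing from odd to even denominator exponents, and of the independence of $K_0(\tau)$ from the chosen trace (unique trace in the irrational case, constant fibrewise rank in the rational matrix-bundle case) all match what is needed and what the paper does.
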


\begin{proof}
Define $j_n^\alpha\in\{0,\ldots,N-1\}$ for $n\in\N$ by $N\alpha_{n+1} = \alpha_{n} + j^\alpha_n$, so that by definition:
\[
J^\alpha_{n} = \sum_{k=0}^{n-1} N^{k}j^\alpha_k\text{.}
\]
To ease notations, we also introduce for all $n\in\N$ the integer $r^\alpha_n \in \{0,\ldots,N^2-1\}$ such that $N^2\alpha_{2n+2} = \alpha_{2n} + r^\alpha_n$. Thus $r^\alpha_n = Nj^\alpha_{2n+1} + j^\alpha_{2n}$ and $J^\alpha_{2n} = \sum_{k=0}^{n-1} N^{2k}r^\alpha_k$ for all $n\in\N$.

As a preliminary step, we check that $\mathscr{K}_\alpha$ is a group. It is a nonempty subset of $\Q_N^2$ since it contains $(0,0)$. Now, let $\left(z+\frac{pJ_k^\alpha}{N^{k}}, \frac{p}{J_k^\alpha}\right)$ and $\left(y+\frac{qJ_r^\alpha}{N^r},\frac{q}{N^r}\right)$ be elements of $\mathscr{K}_\alpha$. Let $n = \max(k,r)$, and $m_1,m_2 \in \N$ be given so that $N^k m_1 = N^n$ and $N^r m_2 = N^n$. We then have:
\begin{equation}\label{Kgroupproof}
\left(z+\frac{pJ_k^\alpha}{N^k}, \frac{p}{N^k}\right) - \left(y+\frac{qJ_r^\alpha}{N^r}, \frac{q}{N^r}\right) = \left(z-y+\frac{ m_1pJ_k^\alpha - m_2qJ_r^\alpha}{N^n}, \frac{m_1p - m_2q}{N^n}\right)\text{.}
\end{equation}
Now, assume $k<n$, so $r=n$. By definition, $J_n^\alpha = J_k^\alpha + N^k j_k^\alpha + \cdots + N^{n-1}j_{n-1}^\alpha$ so $m_1 J_k^\alpha = m_1 J_n^\alpha - (N^nj_k^\alpha + N^{n+1}j_{k+1}^\alpha +\cdots + N^{2n-1}j_{n-1}^\alpha)$, so $\frac{m_1pJ_k^\alpha}{N^n} = - j_k^\alpha - \cdots - N^{n-1}j_{n-1}^\alpha + \frac{m_1pJ_n^\alpha}{N^n}$. In this case, Expression (\ref{Kgroupproof}) becomes:
\[
\left(z-y-j_k^\alpha-\cdots-N^{n-1}j_{n-1}^\alpha+\frac{ (m_1p- m_2q)J_n^\alpha}{N^n}, \frac{m_1p - m_2q}{N^n}\right)
\]
which lies in $\mathscr{K}_\alpha$. The computations are similar if we assume instead $r<n$ and $k=n$. Thus $\mathscr{K}_\alpha$ is a subgroup of $\Q_N^2$. We remark here that the sequences $(j_k)_{k\in\N}$ and $(J_k)_{k\in\N}$ are closely related to the group of $N$-adic integers $\Z_N$; we shall discuss this relationship in detail at the conclusion of the proof.

\bigskip We simplify our notations in this proof and denote the canonical unitaries of the rotation C*-algebra $A_{\alpha_{2k}}$ as $U_k$ and $V_k$ for all $k\in\N$. It is well known that:
\[
K_0(A_{\alpha_{2k}})  = \Z^2 \;\;\text{and}\;\;
K_1(A_{\alpha_{2k}})  = \Z^2 \text{.}
\]
Moreover, $K_0(A_{\alpha_{2k}})$ is generated by the classes of the identity and a Rieffel projection $P$ of trace $\alpha_{2k}$, which we denote by $(1,0)$ and $(0,1)$ respectively. We also know that $K_1(A_{\alpha_{2k}})$ is generated by the classes of $U_k$ and $V_k$, denoted respectively by $(1,0)$ and $(0,1)$.

We start with a key observation. Let $P$ be a Rieffel projection of trace $\alpha_{2k}$ in $A_{\alpha_{2k}}$, then it is of the form $g(U_k)V_k+f(U_k)+h(U_k)V_k^\ast$ with $f,g,h\in C(\T)$ and $\alpha_{2k} = \int_{\T} f$. Hence $P$ is mapped by $\varphi_k$ to the Rieffel projection $g(U_{k+1}^{N})V_{k+1}^{N} + f(U_{k+1}^{N}) + h(U_{k+1}^{N})V_{k+1}^{N}$ whose trace is again $\alpha_{2k}$. We recall that with our notation:
\[
N^2 \alpha_{2k+2} = \alpha_{2k} + r_{k}^\alpha \text{,}
\]
where we note that $\alpha_{2k+2}$ is the trace of the generator of $K_0(A_{2k+2})$.
Let $k\in\N$ and let $\varphi_k$ be the *-morphism defined in Theorem (\ref{ATorus}). The maps $K_0(\varphi_k)$ and $K_1(\varphi_k)$ are thus completely determined, as morphisms of $\Z^2$, by the relations:
\[
K_1(\varphi_k) : \left\{ \begin{array}{ccc}
(1,0) &\mapsto& (N,0)\\
(0,1) &\mapsto& (0,N)
\end{array}\right.
\;\text{and}\;
K_0(\varphi_k) :  \left\{ \begin{array}{ccc}
(1,0) &\mapsto& (1,0)\\
(0,1) &\mapsto& (r_{2k}^\alpha,N)
\end{array}\right.
\]

 We now use the  continuity of $K$-theory groups to conclude:
\begin{eqnarray*}
K_1(\algebra_\alpha) &=& \varinjlim\;\left(
\xymatrix@1{
{\Z^2} \ar[rr]^{K_1(\varphi_0)} & & {\Z^2} \ar[rr]^{K_1(\varphi_2)} & & {\Z^2} \ar[rr]^{K_1(\varphi_2)} & &\cdots
}\right)\\
&=& \Q_N^2\text{,}
\end{eqnarray*}
and
\begin{eqnarray*}
K_0(\algebra_\alpha) &=& \varinjlim\;\left(
\xymatrix@1{
{\Z^2} \ar[rr]^{K_0(\varphi_0)} & & {\Z^2} \ar[rr]^{K_0(\varphi_2)} & & {\Z^2} \ar[rr]^{K_0(\varphi_2)} & &\cdots
}\right)\\
&=&\varinjlim\; \left(\xymatrix@1{
{\Z^2} \ar[rr]^{\small \left[ \begin{array}{cc} 1 & r_{0}^\alpha \\ 0 & N^2 \end{array} \right]} & & {\Z^2} \ar[rr]^{\small \left[ \begin{array}{cc} 1 & r_{1}^\alpha \\ 0 & N^2 \end{array} \right]} & & {\Z^2} \ar[rr]^{\small \left[ \begin{array}{cc} 1 & r_{2}^\alpha \\ 0 & N^2 \end{array} \right]} & &\cdots
}\right)
\text{.}
\end{eqnarray*}
We claim that the group $K_0(\algebra_\alpha)$ is $\mathscr{K}_\alpha$. For $k\in\N$ we define $\upsilon_k : \Z^2 \rightarrow \mathscr{K}$ to be the multiplication by the matrix: 
\[
\left[ \begin{array}{cc} 1 & -\frac{J_k^\alpha}{N^{2k}} \\ 0 & \frac{1}{N^{2k}} \end{array} \right] = \prod_{n=0}^k \left[ \begin{array}{cc} 1 & r_{n-k}^\alpha \\ 0 & N^2 \end{array} \right]^{-1}\text{.}
\]

We now check the following diagram is commutative:
\[
\xymatrix{
{\Z^2} \ar[rr]^{\small \left[ \begin{array}{cc} 1 & r_{0}^\alpha \\ 0 & N^2 \end{array} \right]} \ar[d]^{\upsilon_0} & & {\Z^2} \ar[rr]^{\small \left[ \begin{array}{cc} 1 & r_{1}^\alpha \\ 0 & N^2 \end{array} \right]} \ar[d]^{\upsilon_1} & & {\Z^2} \ar[rr]^{\small \left[ \begin{array}{cc} 1 & r_{2}^\alpha \\ 0 & N^2 \end{array} \right]} \ar[d]^{\upsilon_2} & &\cdots
\\
\mathscr{K}_\alpha \ar@{=}[rr] & & \mathscr{K}_\alpha \ar@{=}[rr] & & \mathscr{K}_\alpha \ar@{=}[rr] & & \cdots
}
\]
It is now easy to check that $\mathscr{K}$ is indeed $K_0(\algebra_\alpha)$.

\bigskip Let $\tau$ be a tracial state of $\algebra_\alpha$. First, we note that $(1,0)\in\mathscr{K}$ is the image of $(1,0)\in\Z^2$ for all $\upsilon_k$, with $k\in\N$. Since $\tau(1)=1$ in $A_{2k}$ for all $k\in\N$, we conclude that $K_0(\tau)(1,0) = 1$. On the other hand, the element $\left(\frac{J_{2k}^\alpha}{N^{2k}},\frac{1}{N^{2k}}\right)$ is the image of $(0,1)\in\Z^2$ by $\upsilon_k$. The generator $(0,1)$ of $K_0(A_{\alpha_{2k}})$ has trace $\alpha_{2k}$, so $K_0(\tau)\left(\frac{J_{2k}^\alpha}{N^{2k}},\frac{1}{N^{2k}}\right) = \alpha_{2k}$ for all $k\in\N$.  
Now, since:
\[
\left(\frac{J_{2k-1}}{N^{2k-1}},\frac{1}{N^{2k-1}}\right) =
\left(-j^\alpha_{2k-1}+\frac{J_{2k}N}{N^{2k-1}},\frac{N}{N^{2k-1}}\right)
\]
and since $K_0(\tau)$ is a group morphism, we get:
\[
K_0(\tau) \left(\frac{J_{2k-1}}{N^{2k-1}},\frac{1}{N^{2k-1}}\right) =
-j_{2k-1}^\alpha + N\alpha_{2k} = \alpha_{2k-1}
\]
for all $k\in\N,k>1$. In summary, $K_0(\tau)$ maps $\left(\frac{J_{k}}{N^{k}},\frac{1}{N^{k}}\right)$ to $\alpha_{k}$ for all $k\in\N$. Using the morphism property of $K_0(\tau)$ again, we obtain the desired formula.
\end{proof}

The group $\mathscr{K}_\alpha$ defined in Theorem (\ref{Ktheory}) is in fact an extension of $\Q_N$ given by:
\begin{equation}\label{Kextension}
\begin{CD}
0  @>>> {\Z} @>\iota>> {\mathscr{K}_\alpha} @>\pi>> {\Q_N} @>>> 0\text{,}
\end{CD}
\end{equation}
where $\iota:z\in\Z\mapsto (z,0)$ is the canonical injection and $\pi:\left(z+\frac{pJ_k^\alpha}{N^k},\frac{p}{N^k}\right)\mapsto\frac{p}{N^k}$ is easily checked to be a group morphism such that the above sequence is exact. The class of this extension in $H^2(\Q_N,\Z)$ is however not in general an invariant of the *-isomorphism problem for noncommutative solenoids: as we shall explain in the next section, we must consider a weaker form of equivalence for Abelian extensions to construct such an invariant. It will translate into an equivalence relation on $\mathrm{Ext}(\Q_N,\Z_N)$ to be detailed after Theorem (\ref{classification}).

\bigskip We now proceed to provide a description of the $\Z$-valued $2$-cocycle of $\Q_N$ associated to Extension (\ref{Kextension}) and provide a different, more standard picture for $\mathscr{K}_\alpha$. Remarkably, we shall see that every element of $\mathrm{Ext}(\Q_N,\Z)$ is given by the $K$-theory of $\algebra_\alpha$ for some $\alpha\in\Xi_N$. As a first indication of this connection, we note that for a given $\alpha\in\Xi_N$, the sequence $(J^\alpha_k)_{k\in\N}$ can be seen an element of the group $\Z_N$ of $N$-adic integers \cite{Kaplansky69}. For our purpose, we choose the following description of $\Z_N$:

\begin{definition}[\cite{Kaplansky69}]\label{NadicInteger}
Let $N\in\N,N>1$. Set:
\[
\Z_N = \left\{ (J_k)_{k\in\N} : \wedge \left\{ \begin{array}{l}J_0=0, \\ \forall k \in \N \;\; \exists j\in\{0,\ldots,N-1\}\;\;\; J_{k+1} = J_k + N^k j\end{array}\right. \right\}\text{.}
\]
This set is made into a group with the following operation. If $J,K \in \Z_N$ then $J+K$ is the sequence $(L_k)_{k\in\N}$ where $L_k$ is the remainder of the Euclidean division of $J_k+K_k$ by $N^k$ for all $k\in\N$. This group is the group of $N$-adic integers.
\end{definition}

The connection between $\Xi_N$ (or equivalently, $\solenoid_N$), $\Z_N$, $\mathrm{Ext}(\Q_N,\Z)$ and $K_0$ groups of noncommutative solenoids is the matter of the next few theorems. We start by observing that the following is a short exact sequence:
\[
\begin{CD}
0 @>>> \Z_N @>\iota>> \Xi_N @>q>> \T @>>> 0
\end{CD}
\]
where $q:\alpha\in\Xi_N \mapsto \exp(2i\pi \alpha_0)$ and $\iota$ is the natural inclusion given by:
\[
\iota : (J_n)_{n\in\N} \in \Z_N \longmapsto \left(\frac{J_n}{N^n}\right)_{n\in\N} \text{.}
\]
Thus, for any element $\alpha$ of $\Xi_N$, the sequence $(J^\alpha_k)_{k\in\N}$ of Theorem (\ref{Ktheory}) associated to $\alpha$ is easily checked to be the unique element in $\Z_N$ such that $\alpha_k = q(\alpha) + J^\alpha_k$ for all $k\in\N$.

We shall use the following terminology:

\begin{definition}
Let $N\in\N,N>1$. The $N$-reduced form of $q\in\Q_N$ is $(p,N^k)\in\Z\times\N$ such that $q = \frac{p}{N^k}$ where $k$ is the smallest element of $\{ n \in \N : \exists p \in \Z\;q=\frac{q}{N^n}\}$. By standard abuse of terminology, we say that $\frac{p}{N^k}$ is $q$ written in its reduced form.
\end{definition}
A fraction in $N$-reduced form in $\Q_N$ may not be irreducible in $\Q$, so this notion depends on our choice of $N$. Namely, even if $\Q_N=\Q_M$ for $N\not=M$, and $\frac{p}{N^k}\in\Q_N$ is in $N$-reduced form, it may not be in $M$-reduced form. We shall however drop the prefix $N$ when the context allows it without introducing any confusion.

\bigskip We now prove the following lemma:

\begin{lemma}\label{QgroupCocycle}
Let $N\in\N,N>1$ and $\alpha\in\Xi_N$. Let $J=(J_k)_{k\in\N}\in\Z_N$.  Writing all elements of $\Q_N$ in their $N$-reduced form only, the map:
\[
\xi_J \left| \begin{array}{ccl}
\Q_N\times \Q_N & \longmapsto & \Z \\
\left(\frac{p_1}{N^{k_1}},\frac{p_2}{N^{k_2}} \right) & \longmapsto & 
\left\{
\begin{array}{lcl}

-\frac{p_1}{N^{k_1}}\left(J_{k_2}-J_{k_1}\right)&\text{if}&k_2>k_1\\
-\frac{p_2}{N^{k_2}}\left(J_{k_1}-J_{k_2}\right)&\text{if}&k_1>k_2\\
\frac{q}{N^r}\left(J_{k_1}-J_{r}\right)&\text{if}&\wedge\left\{\begin{array}{l}k_1=k_2\\ \frac{p_1}{N^{k_1}}+\frac{p_2}{N^{k_2}}=\frac{q}{N^r}\end{array}\right.\\
\end{array}\right.
\end{array}
\right.
\]
is a $\Z$-valued symmetric $2$-cocycle of $\Q_N$.
\end{lemma}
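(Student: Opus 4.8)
The plan is to recognize $\xi_J$ as the $2$-cocycle measuring the failure of a natural set-theoretic section to be a homomorphism, for an abelian extension of $\Q_N$ by $\Z$ built directly out of $J$. Once $\xi_J$ is displayed in this form, both the cocycle identity and the symmetry follow \emph{formally}, and the entire verification reduces to a bookkeeping check with $N$-reduced forms. This is preferable to a brute-force case analysis of the cocycle identity on triples, which would require splitting on the relative sizes of three denominator exponents.

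Concretely, I would first introduce
\[
\mathscr{K}_J = \left\{ \left( z + \frac{p J_k}{N^k}, \frac{p}{N^k} \right) : z,p\in\Z, k\in\N \right\} \subseteq \Q_N\times\Q_N \text{,}
\]
which is an abelian group: the computation is identical to the one verifying that $\mathscr{K}_\alpha$ is a group in the proof of Theorem (\ref{Ktheory}), since that argument uses only the defining relation $J_{k+1}=J_k+N^k j_k$ with $j_k\in\{0,\ldots,N-1\}$ of Definition (\ref{NadicInteger}), i.e. membership of $J$ in $\Z_N$. I would then define the normalized section
\[
s\left(\frac{p}{N^k}\right) = \left(\frac{p J_k}{N^k}, \frac{p}{N^k}\right), \qquad \tfrac{p}{N^k}\ \text{written in $N$-reduced form} \text{,}
\]
together with $\iota:z\in\Z\mapsto(z,0)\in\mathscr{K}_J$, so that $\pi$ (projection onto the second coordinate) and $\iota$ make $\mathscr{K}_J$ a central (indeed abelian) extension of $\Q_N$ by $\Z$, and $\pi\circ s=\mathrm{id}$ with $s(0)=0$.

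The heart of the argument is the identity $s(x)+s(y)=\iota(\xi_J(x,y))+s(x+y)$. The second coordinate of $s(x)+s(y)-s(x+y)$ is visibly $0$, so the difference lies in $\iota(\Z)$, and I would check that its first coordinate equals $\xi_J(x,y)$ by splitting into the three cases of the definition. For $x=\frac{p_1}{N^{k_1}}$ and $y=\frac{p_2}{N^{k_2}}$ with, say, $k_2>k_1$, the sum $x+y=\frac{p_1N^{k_2-k_1}+p_2}{N^{k_2}}$ is \emph{already} reduced, because $N\mid p_1N^{k_2-k_1}$ while $N\nmid p_2$ forces $N\nmid(p_1N^{k_2-k_1}+p_2)$; substituting $J_r=J_{k_2}$ and simplifying recovers $-\frac{p_1}{N^{k_1}}(J_{k_2}-J_{k_1})$. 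The case $k_1>k_2$ is symmetric, and when $k_1=k_2=k$ one writes $x+y=\frac{q}{N^r}$ in reduced form, uses $p_1+p_2=qN^{k-r}$, and obtains $\frac{q}{N^r}(J_k-J_r)$. That each value is an integer is guaranteed by $N^{k_1}\mid(J_{k_2}-J_{k_1})$ for $k_2\geq k_1$, which is immediate from the telescoping identity $J_{k_2}-J_{k_1}=\sum_{i=k_1}^{k_2-1}N^i j_i$.

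With $\xi_J$ so identified as the section-deviation cocycle of an abelian extension, I would conclude formally: applying associativity of addition in $\mathscr{K}_J$ to $s(x)+s(y)+s(z)$ and using that $\iota(\Z)$ is central yields $\xi_J(x,y)+\xi_J(x+y,z)=\xi_J(y,z)+\xi_J(x,y+z)$, the $2$-cocycle condition, while commutativity of $\mathscr{K}_J$ gives $\xi_J(x,y)=\xi_J(y,x)$, the symmetry. The only genuinely delicate point is the reduced-form bookkeeping in the previous step — in particular confirming that in the unequal-exponent case the sum does not reduce, so that $r=k_2$; this is precisely what makes the piecewise formula coherent, and once it is settled the rest of the proof is purely structural.
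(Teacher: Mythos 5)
Your proof is correct, but it takes a genuinely different route from the paper's. The paper proves the lemma by brute force: it notes symmetry is built into the formula, rewrites $\xi_J$ via the quantities $J_{k,m}=\sum_{r=k}^{m-1}N^{r-k}j_r$, and then verifies the cocycle identity on triples by a case analysis on the relative order of the three denominator exponents $k_x,k_y,k_z$ (with several cases relegated to ``one similarly verifies''). You instead exhibit $\xi_J$ as the factor set of the abelian extension $0\to\Z\to\mathscr{K}_J\to\Q_N\to 0$ relative to the section $s\left(\frac{p}{N^k}\right)=\left(\frac{pJ_k}{N^k},\frac{p}{N^k}\right)$, after which the cocycle identity and symmetry are formal consequences of associativity and commutativity in $\mathscr{K}_J$. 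Your two nontrivial inputs are both sound: the group property of $\mathscr{K}_J$ is verified in the paper's proof of Theorem (\ref{Ktheory}) using only the relation $J_{k+1}=J_k+N^kj_k$ (alternatively, every $J\in\Z_N$ equals $J^\alpha$ for $\alpha_k=J_k/N^k\in\Xi_N$, so you could cite that result verbatim), and your three-case computation of $s(x)+s(y)-s(x+y)$ — including the observation that the sum does not reduce when $k_1\neq k_2$ because $N\nmid p_2$, and integrality via $N^{k_1}\mid J_{k_2}-J_{k_1}$ — is exactly the computation the paper performs later in Theorem (\ref{Qgroup}) when showing that $\omega$ is a group isomorphism from $\mathscr{Q}_\alpha$ onto $\mathscr{K}_\alpha$. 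In effect you have front-loaded Theorem (\ref{Qgroup}): your approach replaces a three-variable case analysis with a two-variable one and renders the subsequent identification $\mathscr{Q}_\alpha\cong\mathscr{K}_\alpha$ automatic, while the paper's approach keeps the lemma self-contained (not logically dependent on the $K$-theoretic group $\mathscr{K}_\alpha$) at the price of more, and partly omitted, casework.
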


\begin{proof}
We introduce some useful notations for this proof. We defined $j_k\in\{0,\ldots,N-1\}$ for all $k\in\N$ by:
\[
J_{k+1} - J_{k} = N^k j_k\text{.}
\]
We also define $J_{k,m}$ for all $m,k\in\N, m>k$ by:
\[
J_{k,k} = 0 \wedge J_{k,m} = \sum_{r=k}^{m-1} N^{r-k} j_r\text{.}
\]
Note that $\frac{J_k-J_r}{N^r} = J_{r,k}$ for all $r\leq k$ by definition.

With this definition, we have $\xi_J\left(\frac{p_1}{N^{k_1}},\frac{p_2}{N^{k_2}}\right)$ equal to $-p_1J_{k_1,k_2}$ if $k_1<k_2$, to $-p_2J_{k_2,k_1}$ when $k_2<k_1$ and $qJ_{r,{k_1}}$ if $k_1=k_2$ and $p_1+p_2 = N^{k_1-r}q$, with $q$ and $N$ relatively prime, and with all fractions written in their reduced form in $\Q_N$.

\bigskip By construction, $\xi_J$ is a symmetric function. Let $x,y,z\in\Q_N$. We wish to show that:
\begin{equation}\label{Zcocycle}
\xi_J(x+y,z)+\xi_J(x,y) = \xi_J(y+z,x)+\xi_J(y,z)\text{.}
\end{equation}
Let us write $x=\frac{p_x}{N^{k_x}}$ in its reduced form, and use similar notations for $y$ and $z$. We proceed by checking various cases.
\begin{case}
Assume $x,y,z$ have the same denominator $N^k$ in their reduced form, and that $x+y=\frac{q}{N^r}$ in its reduced form, with $r<k$. Then by definition, $\xi_J(x,y)=qJ_{r,k}$ and $\xi_J(x+y,z)=-qJ_{r,k}$ so the left hand side of Identity (\ref{Zcocycle}) is zero.
Let $y+z=\frac{q'}{N^n}$ in its reduced form. If, again, $n<k$, the right hand side of Identity (\ref{Zcocycle}) is zero again and we have shown that Identity (\ref{Zcocycle}) holds. If $n=k$ then $\xi_J(y,z)=0$ by definition. Moreover, $x+y+z$ must have denominator $N^k$ in its reduced form. Indeed, since $x,y$ have the same denominator $N^k$ in reduced form, yet their sum does not, $p_x+p_y$ is a multiple of $N$. If moreover, $p_x+p_y+p_z$ is also a multiple of $N$, then $p_z$ is a multiple of $N$, which contradicts the definition of reduced form. Hence, $x+y+z$ has denominator $N^k$ in its reduced form and $\xi_J(x,y+z)=0$ by definition.
\end{case}

\begin{case}
Assume now that $k_x>k_y>k_z$. Then by definition:
\begin{equation}\label{Case2Eq}
\xi_J(x,y)+\xi_J(x+y,z)=-p_yJ_{k_y,k_x} - p_zJ_{k_z,k_x}
\end{equation}
while
\begin{equation}\label{Case2Eq2}
\xi_J(y,z)+\xi_J(y+z,x)=-p_zJ_{k_z,k_y} - (N^{k_y-k_z}p_z+p_y)J_{k_y,k_x}\text{.}
\end{equation}
By definition, $J_{k_z,k_y} + N^{k_y-k_z} J_{k_y,k_x} = J_{k_z,k_x}$. We then easily check that the left and right hand side of Identity (\ref{Zcocycle}) which are given by Identities (\ref{Case2Eq}) and (\ref{Case2Eq2}) agree. 

This case also handles the situation $k_z>k_y>k_z$ by switching the left and right hand side of Identity (\ref{Zcocycle}). 
\end{case}

\begin{case}
Assume now that $k_y>k_x>k_z$. Then the left hand side of  Identity (\ref{Zcocycle}) is given by:
\[
\xi_J(x,y)+\xi_J(x+y,z)=-p_xJ_{k_x,k_y} - p_zJ_{k_z,k_y}
\]
 On the other hand, the right hand side becomes:
\[
\xi_J(y,z)+\xi_J(y+z,x) = -p_zJ_{k_z,k_y} - p_x J_{k_x,k_y}
\]
and thus Identity (\ref{Zcocycle}) is satisfied again. We also get by symmetry the case $k_y>k_x>k_z$.
\end{case}

\begin{case}
Assume $k_x>k_z>k_y$. Then the left hand side of Identity (\ref{Zcocycle}) is:
\[
\xi_J(x,y)+\xi_J(x+y,z)=-p_yJ_{k_y,k_x} -p_z J_{k_z,k_x}
\]
while the right hand side is:
\[
\xi_J(y,z)+\xi_J(y+z,x) =-p_yJ_{k_y,k_z} - (N^{k_z-k_y}p_y+p_z)J_{k_z,k_x}
\]
and as in Case 1, both side agree. The last possible strict inequality $k_z>k_x>k_y$ is handle by symmetry again.
\end{case}

One similarly verifies that $\xi_J$ is a cocycle for the cases $k_y>k_x>k_z$, $k_x>k_z>k_y$, $k_x=k_y>k_z$ and $k_x=k_y>k_z$.

\end{proof}

\begin{theorem}\label{Qgroup}
Let $N\in\N, N>1$ and $\alpha\in\Xi_N$. Let $\xi_\alpha$ be the $\Z$-valued $2$-cocycle of $\Q_N$ given by $\xi_{J^\alpha}$ as defined in Lemma (\ref{QgroupCocycle}), where $J^\alpha_k = N^{k}\alpha_{k} - \alpha_0$ for all $k\in\N$.

Let us define the group $\mathscr{Q}_\alpha$ as the set $\Z\times\Q_N$ together with the operation:
\[
\left(z,\frac{p_1}{N^{k_1}}\right) \boxplus \left(y,\frac{p_2}{N^{k_2}}\right) = \left(z+y+\xi_\alpha\left(\frac{p_1}{N^{k_1}},\frac{p_2}{N^{k_2}}\right), \frac{p_1}{N^{k_1}}+\frac{p_2}{N^{k_2}}\right)
\]
for all $z,y,p_1,p_2\in\Z,k_1,k_2\in\N$. The map:
\[
\omega \left| \begin{array}{ccc}

\mathscr{Q}_\alpha & \longrightarrow & \mathscr{K}_\alpha\\
\left(z,\frac{p}{N^k}\right) &\longmapsto& \left(z+\frac{pJ^\alpha_k}{N^k},\frac{p}{N^k}\right)\text{.}

\end{array}
\right.
\]
is a group isomorphism. Thus $K_0(\algebra_\alpha)$ is isomorphic to $\mathscr{Q}_\alpha$ and, using $\omega$ to identify these groups, we have:
\[
K_0(\tau) : (1,0)\mapsto 1, \;\; \left(0,\frac{1}{N^k}\right)\mapsto \alpha_{k}\text{.}
\]
\end{theorem}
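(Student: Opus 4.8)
The plan is to recognize $\mathscr{Q}_\alpha$ as the abelian extension of $\Q_N$ by $\Z$ attached to the symmetric $2$-cocycle $\xi_\alpha$, and to exhibit $\omega$ as the canonical isomorphism between this abstract extension and the concrete extension $\mathscr{K}_\alpha \subseteq \Q_N^2$ from Theorem (\ref{Ktheory}). Concretely, I would introduce the set-theoretic section
\[
s : \frac{p}{N^k} \in \Q_N \longmapsto \left( \frac{pJ^\alpha_k}{N^k}, \frac{p}{N^k} \right) \in \mathscr{K}_\alpha \text{,}
\]
where the fraction $\frac{p}{N^k}$ is always taken in $N$-reduced form, so that $\omega\left(z, \frac{p}{N^k}\right) = \iota(z) + s\left(\frac{p}{N^k}\right)$ with $\iota : z \mapsto (z,0)$. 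The whole statement then reduces to the single identity
\[
\xi_\alpha\left(\frac{p_1}{N^{k_1}}, \frac{p_2}{N^{k_2}}\right) = \frac{p_1 J^\alpha_{k_1}}{N^{k_1}} + \frac{p_2 J^\alpha_{k_2}}{N^{k_2}} - \frac{q J^\alpha_r}{N^r} \text{,}
\]
where $\frac{q}{N^r}$ is the $N$-reduced form of $\frac{p_1}{N^{k_1}} + \frac{p_2}{N^{k_2}}$; this says precisely that $\xi_\alpha$ measures the failure of $s$ to be a homomorphism.

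Before using this identity I would dispose of well-definedness and bijectivity of $\omega$. Since every element of $\Q_N$ has a unique $N$-reduced form, the second coordinate of $\omega\left(z, \frac{p}{N^k}\right)$ recovers $p$ and $k$, and the first coordinate then recovers $z$, so $\omega$ is injective. For surjectivity onto $\mathscr{K}_\alpha$ I must check that an element $\left(z + \frac{pJ^\alpha_k}{N^k}, \frac{p}{N^k}\right)$ written with a possibly non-reduced denominator agrees with its reduced-form expression; writing $\frac{p}{N^k} = \frac{p'}{N^{k'}}$ in reduced form with $p = p'N^{k-k'}$, the difference of first coordinates is $\frac{p'(J^\alpha_k - J^\alpha_{k'})}{N^{k'}}$, which is an integer because $J^\alpha_k - J^\alpha_{k'} = \sum_{r=k'}^{k-1} N^r j^\alpha_r$ is divisible by $N^{k'}$. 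Hence the image of $\omega$ is exactly $\mathscr{K}_\alpha$ and the map is a bijection.

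The core of the argument, and the step I expect to require the most care, is verifying the displayed identity for $\xi_\alpha$ by matching the three branches of the definition of $\xi_{J^\alpha}$ in Lemma (\ref{QgroupCocycle}). When $k_2 > k_1$ there is no cancellation, so $r = k_2$ and $q = p_1 N^{k_2 - k_1} + p_2$; substituting gives $\frac{p_1 J^\alpha_{k_1}}{N^{k_1}} + (p_2 - q)\frac{J^\alpha_{k_2}}{N^{k_2}} = -\frac{p_1}{N^{k_1}}(J^\alpha_{k_2} - J^\alpha_{k_1})$, which is the first branch, and the case $k_1 > k_2$ is symmetric. When $k_1 = k_2 = k$, possible cancellation gives $r \leq k$ with $p_1 + p_2 = q N^{k-r}$, and the right-hand side collapses to $\frac{q}{N^r}(J^\alpha_k - J^\alpha_r)$, the third branch. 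Granting this identity, a direct comparison of $\omega\left(\left(z_1,\frac{p_1}{N^{k_1}}\right) \boxplus \left(z_2,\frac{p_2}{N^{k_2}}\right)\right)$ with $\omega\left(z_1,\frac{p_1}{N^{k_1}}\right) + \omega\left(z_2,\frac{p_2}{N^{k_2}}\right)$ shows they coincide, so $\omega$ is a group isomorphism, associativity of $\boxplus$ itself being the cocycle identity already established in Lemma (\ref{QgroupCocycle}).

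Finally I would transport the conclusions of Theorem (\ref{Ktheory}) through $\omega$. Since $K_0(\algebra_\alpha) = \mathscr{K}_\alpha$, the isomorphism $\omega$ identifies $K_0(\algebra_\alpha)$ with $\mathscr{Q}_\alpha$. For the trace, note $\omega(1,0) = (1,0)$ and $\omega\left(0, \frac{1}{N^k}\right) = \left(\frac{J^\alpha_k}{N^k}, \frac{1}{N^k}\right)$; feeding these into the formula $K_0(\tau)\left(z + \frac{pJ^\alpha_k}{N^k}, \frac{p}{N^k}\right) = z + p\alpha_k$ from Theorem (\ref{Ktheory}) yields $K_0(\tau)(1,0) = 1$ and $K_0(\tau)\left(0,\frac{1}{N^k}\right) = \alpha_k$, as claimed.
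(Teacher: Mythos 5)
Your proposal is correct and follows essentially the same route as the paper: the paper also proves that $\omega$ is a homomorphism by a case analysis on $k_1 \lessgtr k_2$ and on whether cancellation occurs, using the divisibility $N^{k'} \mid J^\alpha_k - J^\alpha_{k'}$, which is exactly your identity $\xi_\alpha(x,y) = s(x)+s(y)-s(x+y)$ unwound. Your explicit treatment of the reduced-form/well-definedness issue is a welcome addition to the paper's ``it is immediate that $\omega$ is a bijection,'' but it does not change the substance of the argument.
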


\begin{proof}
It is immediate that $\omega$ is a bijection. It remains to show that it is a group morphism. Let $x=\frac{p_x}{N^{k_x}}$, $y=\frac{p_y}{N^{k_y}}$ with $p_x,p_y\in\Z,k_x,k_y\in\N$. Let $z,t\in\Z$. We consider three distinct cases.
\begin{case}
The easiest case is when $k_x=k_y$ and $p_x+p_y$ is not a multiple of $N$. Then $\xi_\alpha(x,y)=0$ so $\boxplus$ reduces to the usual addition and we have:
\[
\omega(z,x)+\omega(t,y) = (z+x,x)+(t+y,y) = \omega(z+t,x+y) = \omega((z,x)\boxplus(t,y))\text{,}
\] 
as needed.
\end{case}

\begin{case}
Now, assume $k_x=k_y$ yet $p_x+p_y=N^{k-r} q$ for some $q$ not divisible by $N$ and some $r\in\N, r>0$. Then:
\begin{eqnarray*}
\omega(z,x)+\omega(t,y) &=& \left(z+t+\frac{(p_x+p_y)J^\alpha_{k_x}}{N^{k_x}},\frac{p_x+p_y}{N^{k_x}}\right)\\
&=&\left(z+t+\frac{qJ_{k_x}^\alpha}{N^r},\frac{q}{N^r}\right)\text{.}
\end{eqnarray*}
Now, $J^\alpha_{k_x} = J^\alpha_r + N^r J^\alpha_{r,k_x}$ by definition, as given in Theorem (\ref{Ktheory}) and Lemma (\ref{QgroupCocycle}). Hence:
\begin{eqnarray*}
\omega(z,x)+\omega(t,y) &=& \left(z+t+qJ_{r,k_x} +\frac{qJ_r^\alpha}{N^r},\frac{q}{N^r}\right)\\
&=&
\left(z+t+\xi_\alpha\left(\frac{p_x}{N^{k_x}},\frac{p_y}{N^{k_y}}\right)+\frac{q}{N^r}, \frac{q}{N^r}\right)\\
&=&\omega((z,x)\boxplus(t,y))\text{,}
\end{eqnarray*}
as desired.
\end{case}

\begin{case}
Last, assume $k_x\not=k_y$. Without loss of generality, since our groups are Abelian, we may assume $k_x<k_y$. Now:
\begin{eqnarray*}
\omega(z,x)+\omega(y,t) &=& \left(z+t+\frac{p_xJ^\alpha_{k_x}}{N^{k_x}}+\frac{p_yJ^\alpha_{k_y}}{N^{k_y}},\frac{p_xN^{k_y-k_x}+p_y}{N^{k_y}}\right)\\
&=& \left(z+t+\frac{p_x N^{k_y-k_x} J^\alpha_{k_x}}{N^{k_y}}+\frac{p_yJ^\alpha_{k_y}}{N^{k_y}},\frac{p_xN^{k_y-k_x}+p_y}{N^{k_y}}\right)\\
&=& \left(z+t+\frac{p_xN^{k_y-k_x}(J^\alpha_{k_y}-N^{k_x}J^\alpha_{k_x,k_y})}{N^{k_y}}+\frac{p_yJ^\alpha_{k_y}}{N^{k_y}},\frac{p_xN^{k_y-k_x}+p_y}{N^{k_y}}\right)\\
&=& \left(z+t-p_xJ^\alpha_{k_x,k_y}+\frac{\left(p_xN^{k_y-k_x}+p_y\right)J^\alpha_{k_y}}{N^{k_y}},\frac{p_xN^{k_y-k_x}+p_y}{N^{k_y}}\right)\\
&=&\omega((z,x)\boxplus(t,y))\text{,}
\end{eqnarray*}
as expected.
\end{case}

This completes the proof of that $\omega$ is an isomorphism. Now, $\omega(1,0)=(1,0)$ and $\omega\left(0,\frac{1}{N^k}\right) = \left(\frac{1}{N^k},\frac{1}{N^k}\right)$ for all $k\in\N$. Using Theorem (\ref{Ktheory}), we conclude that tracial states lift to the given map in our theorem.
\end{proof}

Thus, to $\alpha\in\Xi_N$, we can associate a cocycle $\xi_\alpha$ in $H^2(\Q_N,\Z)$ such that $K_0(\algebra_\alpha)$ is given by the extension of $\Q_N$ by $\Z$ associated with $\xi_\alpha$. It is natural to ask how much information the class of $\xi_\alpha$ in $H^2(\Q_N,\Z)$ contains about noncommutative solenoids. This question will be fully answered in the next section, yet we start here by showing that the map $J\in\Z_N\mapsto [\xi_J]\in \mathrm{Ext}(\Q_N,\Z)$ is surjective with kernel $\Z$, where $[\xi]$ is the class of the extensions of $\Q_N$ by $\Z$ (which is Abelian for our cocycles) for the equivalence of extension relation.

First, we recall:
\begin{lemma}\label{ZintoNadicZ}
Let $N\in\N,N>1$. Let $z\in\N$. For all $n\in\N$ we define $\iota(z)_n$ to be the remainder for the Euclidean division of $z$ by $N^n$ in $\Z$. Then $\iota(z)\in\Z_N$ by construction, and there exists $K_z\in\N$ such that $\iota_n(z)=\iota_{K_z}(z)$ for all $n\geq K_z$. Conversely, given any $J\in\Z_N$ which is eventually constant, we can associate the natural number $\zeta(J) = J_K$ where $K$ is the largest natural number such that $J_{K}<J_{K+1}$. One checks easily that $\zeta\circ\iota$ is the identity on $\N$.

The map $\iota$ extends to a group monomorphism from $\Z$ to $\Z_N$. Moreover, if $z<0$ then there exists $K_z\in\N$ such that $\iota_{n+1}-\iota_{n} = N^k(N-1)$ for all $n\geq K_z$. Conversely, if, for some $J\in\Z_N$, there exists $K\in\N$ such that $J_{k+1}-J_{k} = (N-1)N^k$ for all $k\geq K$, then there exists a unique $z\in\Z,z<0$ such that $\iota(z) = J$.
\end{lemma}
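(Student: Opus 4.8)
The plan is to recognize $\Z_N$ as the inverse limit $\varprojlim_k \Z/N^k\Z$ and to show that $\iota$ is nothing but the canonical diagonal map $\Z \to \varprojlim_k \Z/N^k\Z$. Concretely, from Definition (\ref{NadicInteger}) one checks by induction that every $J \in \Z_N$ satisfies $J_k \in \{0,\ldots,N^k-1\}$ and $J_{k+1}\equiv J_k \pmod{N^k}$, so $J_k$ is exactly the representative of a residue class modulo $N^k$, and the stated group law is precisely coordinatewise reduction modulo $N^k$. Under this dictionary the defining formula $\iota(z)_n = z \bmod N^n$ makes sense for every $z\in\Z$, and most of the lemma reduces to elementary residue computations.

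First I would treat the nonnegative case. For $z\in\N$, the relations $\iota(z)_0 = 0$ and $\iota(z)_{n+1}\equiv\iota(z)_n\pmod{N^n}$ (both reducing $z$ modulo the relevant power of $N$) force $\iota(z)_{n+1}-\iota(z)_n = N^n j_n$ with $j_n\in\{0,\ldots,N-1\}$, so $\iota(z)\in\Z_N$. Choosing $K_z$ with $N^{K_z}>z$ gives $z\bmod N^n = z$ for all $n\geq K_z$, which is the asserted eventual constancy. The inverse $\zeta$ simply reads off this eventual constant value: if $K$ is the last index at which $J$ increases, the constant value is $J_{K+1}$, and a direct inspection of the base-$N$ expansion of $z$ shows $\zeta\circ\iota=\mathrm{id}_\N$.

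Next I would extend $\iota$ to $\Z$ by the same formula $\iota(z)_n = z\bmod N^n$ and verify it is a monomorphism. The homomorphism property is immediate from the group law, since $(\iota(z)+\iota(w))_n = (\iota(z)_n+\iota(w)_n)\bmod N^n = (z+w)\bmod N^n = \iota(z+w)_n$. Injectivity is equally direct, as $\iota(z)=0$ means $N^n\mid z$ for every $n$, forcing $z=0$.

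Finally I would analyze negative integers. For $z<0$, picking $K_z$ with $N^{K_z}>|z|$ yields $\iota(z)_n = z+N^n$ for all $n\geq K_z$, hence $\iota(z)_{n+1}-\iota(z)_n = N^n(N-1)$, i.e. the digits are eventually $N-1$. For the converse, given $J$ with $J_{k+1}-J_k=(N-1)N^k$ for $k\geq K$, I would telescope to obtain $J_k = (J_K-N^K)+N^k$ for $k\geq K$ and set $z=J_K-N^K$; since $0\leq J_K\leq N^K-1$ this $z$ lies in $\{-N^K,\ldots,-1\}$, so $z<0$, and one checks $\iota(z)_k = z+N^k = J_k$ for $k\geq K$. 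The agreement is then propagated to all smaller indices using the inverse-limit compatibility $J_k = J_{k+1}\bmod N^k$, valid for both $J$ and $\iota(z)$, while uniqueness of $z$ follows from injectivity. I expect this last converse, namely the telescoping reconstruction of $z$ together with the downward propagation to the small coordinates, to be the only step needing genuine care, along with the minor index bookkeeping in the definition of $\zeta$; everything else is a routine residue calculation.
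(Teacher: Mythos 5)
Your proposal is correct and complete; the paper itself gives no argument for this lemma beyond the phrase ``this is well known,'' and your identification of $\Z_N$ with $\varprojlim_k \Z/N^k\Z$ together with the residue computations, the telescoping $J_k = (J_K - N^K) + N^k$ for the negative case, and the downward propagation via $J_k = J_{k+1} \bmod N^k$ is exactly the standard argument being invoked. The one point worth recording is the index slip you already caught: as stated the lemma sets $\zeta(J) = J_K$ for $K$ the largest index with $J_K < J_{K+1}$, whereas the eventual constant value (and the correct inverse to $\iota$ on $\N$) is $J_{K+1}$, with $z = 0$ handled separately since no such $K$ exists.
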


\begin{proof}
This is well known.
\end{proof}

We now compute the cohomology relation for our $\Z$-valued cocycles given by $K_0$ groups of noncommutative solenoids:

\begin{theorem}\label{QgroupCohomologous}
Let $N\in\N,N>1$. Let $J=(J_k)_{k\in\N}\in\Z_N$ and $R=(R_k)_{k\in\N}\in\Z_N$. Let $\iota:\Z \rightarrow \Z_N$ be the monomorphism of Lemma (\ref{ZintoNadicZ}). Let $\xi_J,\xi_R$ be the respective $\Z$-valued $2$-cocycle of $\Q_N$ given by Lemma (\ref{QgroupCocycle}). Then $\xi_J$ and $\xi_R$ are cohomologous if and only if $J-R \in \iota(Z)$. This is equivalent to one of the following condition holding:
\begin{itemize}
\item There exists $M\in\N$ such that $J_n-R_n = N^{n-1}(N-1)$ for all $n\geq M$,
\item There exists $M\in\N$ such that $J_n-R_n = N^{n-1}(1-N)$ for all $n\geq M$,
\item There exists $M\in\N$ such that $J_n = R_n$ for all $n\geq M$.
\end{itemize}
In particular, if $N>1$ then there exists nontrivial cocycles of the form $\xi_\alpha$ for some $\alpha\in\Xi_N$.
\end{theorem}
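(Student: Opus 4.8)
The plan is to reduce the whole statement to a single coboundary computation and then read off the three explicit conditions from Lemma \ref{ZintoNadicZ}. The first observation is that the assignment $J \mapsto \xi_J$ of Lemma \ref{QgroupCocycle} is additive in $J$: which of the three cases in the definition of $\xi_J$ applies depends only on the two arguments $\frac{p_1}{N^{k_1}},\frac{p_2}{N^{k_2}}$ written in $N$-reduced form, and never on $J$, while within each case the value is $\Z$-linear in the quantities $J_{k}-J_{r}$. Hence $\xi_J-\xi_R = \xi_D$, where $D=(J_k-R_k)_{k\in\N}$ is the \emph{componentwise} difference, a coherent integer sequence with $D_0=0$ (so $D_{k+1}-D_k$ is divisible by $N^k$, though $D_k$ may be negative). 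Since $\xi_J$ and $\xi_R$ are cohomologous exactly when $\xi_J-\xi_R$ is a coboundary, everything reduces to determining when $\xi_D$ is a coboundary.

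For the coboundary criterion I would argue through the extension picture furnished by Theorems \ref{Ktheory} and \ref{Qgroup}. The (symmetric) cocycle $\xi_D$ is a coboundary precisely when the abelian extension $0\to\Z\to\mathscr{K}_D\to\Q_N\to 0$ splits, where $\mathscr{K}_D=\{(z+\tfrac{pD_k}{N^k},\tfrac{p}{N^k})\}\subset\Q_N^2$ (the subgroup computation of Theorem \ref{Ktheory} goes through verbatim for any coherent integer sequence $D$). A splitting is a homomorphic section $s\colon\Q_N\to\mathscr{K}_D$, $s(q)=(h(q),q)$, whose first component $h\colon\Q_N\to\Q_N$ is an additive endomorphism; from $N^k h(\tfrac{1}{N^k})=h(1)$ one sees $h$ is multiplication by $c:=h(1)$, and $c\in\Z$ is forced by $D_0=0$. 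The condition $(cq,q)\in\mathscr{K}_D$ for every $q=\tfrac{p}{N^k}$ then amounts exactly to $c\equiv D_k \pmod{N^k}$ for all $k\in\N$. Such an integer $c$ exists if and only if the sequence $(D_k \bmod N^k)_{k}$ — which is precisely the difference $J-R$ taken in $\Z_N$ — equals $\iota(c)$ for some $c\in\Z$, i.e. $J-R\in\iota(\Z)$. This proves that $\xi_J\sim\xi_R$ if and only if $J-R\in\iota(\Z)$.

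It remains to translate $J-R\in\iota(\Z)$ into the three displayed conditions, which I would do by invoking Lemma \ref{ZintoNadicZ}: that lemma characterizes $\iota(\Z)\subset\Z_N$ through the eventual behaviour of the digit sequence, and the trichotomy $c>0$, $c<0$, $c=0$ yields the three stated eventual forms (digits of $J-R$ eventually $0$, eventually $N-1$, respectively the sequence being zero). For the final assertion, since $\Z_N$ is uncountable while $\iota(\Z)$ is countable, I would pick any $J\in\Z_N$ whose digit sequence is neither eventually $0$ nor eventually $N-1$, realize it as $J=J^{\alpha}$ for the $\alpha\in\Xi_N$ with $\alpha_0=0$ and $\alpha_n=\tfrac{J_n}{N^n}$ (the correspondence following Definition \ref{NadicInteger}), and conclude that $\xi_\alpha=\xi_J$ is not cohomologous to the trivial cocycle $\xi_0$, because $J-0=J\notin\iota(\Z)$. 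The main obstacle is the coboundary criterion of the second paragraph — pinning down that every section must be multiplication by an integer and extracting the exact congruence $c\equiv D_k\pmod{N^k}$; the remaining difficulty is bookkeeping, namely verifying additivity across all cases of Lemma \ref{QgroupCocycle} and matching the congruence condition to the precise eventual expressions $N^{n-1}(N-1)$ and $N^{n-1}(1-N)$ via Lemma \ref{ZintoNadicZ}.
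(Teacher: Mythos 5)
Your proposal is correct, and it reaches the key congruence by a genuinely different route than the paper. The paper works directly with the coboundary equation: assuming $\sigma=\xi_J-\xi_R=\psi(\cdot+\cdot)-\psi(\cdot)-\psi(\cdot)$, it evaluates on pairs with equal denominators to show $\psi$ is ``almost additive,'' telescopes to obtain $\psi(1)+(J_k-R_k)\in N^k\Z$ for all $k$, and then runs an inequality-based case analysis (using $|J_k-R_k|<N^k$ and the boundedness of $\psi(1)$) to force the eventual digit behaviour, with an explicit construction of $\psi(1)$ for the converse. You instead pass through the extension picture: after the (correct, and worth recording) observation that $J\mapsto\xi_J$ is additive so that $\xi_J-\xi_R=\xi_D$ for the componentwise integer difference $D$, you identify coboundaries with splittings of $0\to\Z\to\mathscr{K}_D\to\Q_N\to 0$, use that every additive endomorphism of $\Q_N$ is multiplication by a scalar to pin down a section as multiplication by some $c\in\Z$, and read off $c\equiv D_k\pmod{N^k}$ for all $k$ --- which is the paper's condition with $c=-\psi(1)$. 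Your route buys a uniform treatment of both directions (existence of the section is literally membership of $J-R$ in $\iota(\Z)$, so the three-way case analysis is delegated entirely to Lemma (\ref{ZintoNadicZ})), at the cost of needing Theorem (\ref{Qgroup}) and the subgroup computation of Theorem (\ref{Ktheory}) to extend verbatim to coherent integer sequences with possibly negative digits $d_k=j_k-r_k$; that extension does go through, since those proofs only use the divisibility $N^k\mid D_m-D_k$ for $m\geq k$. One caveat on the ``bookkeeping'' you defer: when matching $J-R\in\iota(\Z)$ to the three displayed bullets you must distinguish the integer difference $D_n=J_n-R_n$ from the $\Z_N$-difference $(D_n\bmod N^n)$; the bullets as printed in the theorem concern the former and are really statements about the increments (i.e.\ $j_n-r_n$ eventually equal to $N-1$, $1-N$, or $0$), which is what both your argument and the paper's actually produce. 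Your cardinality argument for the final assertion, together with the explicit realization $\alpha_n=J_n/N^n$, is also valid and is more than the paper supplies for that claim.
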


\begin{proof}
Let $\sigma = \xi_J-\xi_R$. For all $n\in\N$, we define $j_n$ and $r_n$ as the unique integers in $\{0,\ldots,N-1\}$ such that $N^n j_n = J_{n+1}-J_n$ and $N^n r_n = R_{n+1}-R_n$. Assume there exists $\psi:\Q_N\rightarrow \Z$ such that for all $x,y\in\Q_N$, we have:
\[
\sigma(x,y) = \psi(x+y)-\psi(x)-\psi(y)\text{.}
\]
Note that $\sigma\left(\frac{p}{N^k},\frac{q}{N^k}\right)=0$ if $p+q$ is not a multiple of $N$, with all fractions written in reduced form in $\Q_N$. Hence, under this condition, we have:
\[
\psi\left(\frac{p}{N^k}+\frac{q}{N^k}\right) = \psi\left(\frac{p}{N^k}\right) + \psi\left(\frac{q}{N^k}\right)\text{.}
\]
We now get:
\[
-j_k+r_k = \psi\left(\frac{1}{N^k}\right) - N\psi\left(\frac{1}{N^{k+1}}\right)\text{,}
\]
so:
\[
\frac{j_k-r_k-\psi\left(\frac{1}{N^k}\right)}{N} = \psi\left(\frac{1}{N^{k+1}}\right)\in\Z
\]
for all $k\in\N$. Hence, for all $k\in\N$ we have:
\[
\psi(1)+(J_k-R_k) \in N^k \Z.
\]
Now:
\begin{eqnarray*}
J_k-R_k &=& \sum_{n=0}^{k-1} N^n (j_n-r_n) 
\end{eqnarray*}
Since $J_1=j_0<N$ and if $J_k<N^k$ then $J_{k+1}=J_k+N^kj_k < N^k +N^{k+1}-N^k = N^{k+1}$, we conclude by induction that $J_k<N^k$ for all $k\in\N$. Hence, $\psi(1)+J_k-R_k\in\Z_k$ implies that either $\psi(1)+J_k-R_k=0$ or $|\psi(1)+J_k-R_k|\geq N^k$.

\begin{case}
Assume first that for all $m\in\N$ there exists $k\in\N$ with $k>m$ such that $\psi(1)+J_k-R_k \geq N^k$. Then, for all $k\in\N$ such that $\psi(1)+J_k-R_k \geq N^k$: 
\begin{equation}\label{cohomologousInequality1}
\psi(1)\geq N^k - J_k + R_k = 1 + \sum_{n=0}^{k-1}N^n (-j_n+r_n+N-1)
\end{equation}
for infinitely many $k\in\N$. Since $-j_n+r_n>-N$, we have $-j_n+r_n+N-1\geq 0$. If $-j_n+r_n+N-1>0$ then the right hand side of Inequality (\ref{cohomologousInequality1}) is unbounded as $k$ is allowed to go to infinity, which is absurd since the left hand side is $\psi(1)$. This implies that there exists $M\in\N$ such that for all $k\geq M$, we have $j_k-r_k = N-1$. Conversely, if there exists $M\in\N$ such that $j_n-r_n=N-1$ for all $n\geq M$, then set $\psi(1) = 1 + \sum_{n=0}^{M-1} N^n(N-1-j_n+r_n)$. We then have:
\begin{eqnarray*}
\psi(1)+J_k-R_k &=& 1+\sum_{n=0}^{M-1} N^n(N-1-j_n+r_n) \\ &+& \sum_{n=0}^{M-1} N^n(j_n-r_n) + \sum_{n=M}^{k-1} N^n(N-1)\\
&=& 1 + \sum_{n=0}^{k-1} \left(N^{k+1}-N^k\right) = N^k
\end{eqnarray*}
as desired.
\end{case}

The cases for $\psi(1)+J_k-R_k\leq N^k$ for infinitely many $k\in\N$, and $\psi(1)=R_k-J_k$ for infinitely many $k\in\N$, are proved similarly.

\end{proof}

\begin{remark}
Let $\alpha\in\Xi_N$ be given such that there exists $\psi\in\Z$ such that $\psi+J_k=N^k$ for all $n\in\N$. Then define the map:
\[
\left| \begin{array}{ccc}
\Z\times \Q_N &\longrightarrow& \mathscr{K}_\alpha\\
\left(z,\frac{p}{N^k}\right)&\longmapsto&\left(z-p\psi+\frac{pJ_k}{N^k},\frac{p}{N^k}\right)
\end{array}\right.
\]
is easily checked to be a group isomorphism. Similar constructions may be used for the other two cases of Theorem (\ref{QgroupCohomologous}).
\end{remark}

The following theorem shows that $K_0$ groups of noncommutative solenoids give all possible Abelian extensions of $\Q_N$ by $\Z$. 

\begin{theorem}\label{QadicExt}
Given any Abelian extension:
\begin{equation}\label{QadicExtExt}
\begin{CD}
0 @>>> \Z @>>> \mathscr{Q} @>>> \Q_N @>>> 0
\end{CD}
\end{equation}
there exists $J\in\Z_N$ such that the extension of $\Z$ by $\Q_N$ given by the cocycle $\xi_J$ of Lemma (\ref{QgroupCocycle}) is equivalent to Extension (\ref{QadicExtExt}). In particular, fixing any $c\in[0,1)$, there exists $\alpha\in\Xi_N$ with $\alpha_0=c$ and such that $\mathscr{Q}$ is isomorphic as a group to $K_0(\algebra_\alpha)$.
\end{theorem}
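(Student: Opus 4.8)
The plan is to prove the surjectivity of the map $J\in\Z_N\mapsto[\xi_J]$ by a direct, constructive argument rather than a homological one. A purely formal approach is tempting: Theorem (\ref{QgroupCohomologous}) shows that $J\mapsto[\xi_J]$ has kernel exactly $\iota(\Z)$, so it descends to an injection of $\Z_N/\Z$ into $\mathrm{Ext}(\Q_N,\Z)$, and the latter is known to be isomorphic to $\Z_N/\Z$. However, an injective endomorphism of an infinite abelian group need not be onto, so this does not close the argument. I would therefore build, from an arbitrary Abelian extension, a section whose associated symmetric $2$-cocycle is \emph{literally} one of the $\xi_J$, yielding an equivalence of extensions directly.

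First I would choose a compatible system of lifts of the standard generators $\tfrac{1}{N^k}$ of $\Q_N$. Fix $t_0\in\mathscr{Q}$ with $\pi(t_0)=1$, and inductively select $t_{k+1}$ projecting to $\tfrac{1}{N^{k+1}}$. Since $\pi(Nt_{k+1}-t_k)=0$, we have $Nt_{k+1}-t_k\in\Z=\ker\pi$; replacing $t_{k+1}$ by $t_{k+1}+b$ with $b\in\Z$ shifts this difference by $Nb$, so I may normalize so that $Nt_{k+1}-t_k=j_k\in\{0,\ldots,N-1\}$. These digits define an $N$-adic integer $J\in\Z_N$ by $J_k=\sum_{m=0}^{k-1}N^m j_m$. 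I then define the section on $N$-reduced forms by $s\!\left(\tfrac{p}{N^k}\right)=p\,t_k$ (so $s(0)=0$) and compute the symmetric cocycle $\sigma(x,y)=s(x)+s(y)-s(x+y)\in\Z$.

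The computation rests on the telescoping identity $t_{k_1}=N^{k_2-k_1}t_{k_2}-\sum_{m=k_1}^{k_2-1}N^{m-k_1}j_m$, obtained by iterating $t_k=Nt_{k+1}-j_k$. Running through the cases---$k_1<k_2$, and $k_1=k_2$ with or without a carry, using that passing to $N$-reduced form changes the denominator precisely when the numerator becomes divisible by $N$---I find that $\sigma$ matches term-by-term the formula of Lemma (\ref{QgroupCocycle}), once the inner sum is recognized as $J_{k_1,k_2}=(J_{k_2}-J_{k_1})/N^{k_1}$. Thus $\sigma=\xi_J$, so the bijection $(z,x)\mapsto z+s(x)$ carries the $\boxplus$-operation of $\mathscr{Q}_\alpha$ onto the group law of $\mathscr{Q}$ and is compatible with the inclusion of $\Z$ and the projection to $\Q_N$; this is exactly an equivalence of the given extension with the one defined by $\xi_J$. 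For the final clause, given $c\in[0,1)$ I set $\alpha_k=(c+J_k)/N^k$; since $J_k<N^k$ (established in the proof of Theorem (\ref{QgroupCohomologous})) this lies in $[0,1)$, and $N\alpha_{k+1}-\alpha_k=j_k\in\{0,\ldots,N-1\}$ shows $\alpha\in\Xi_N$ with $\alpha_0=c$ and $J^\alpha=J$. Theorem (\ref{Qgroup}) then identifies $K_0(\algebra_\alpha)\cong\mathscr{Q}_\alpha$ with the $\xi_J$-extension, so $\mathscr{Q}\cong K_0(\algebra_\alpha)$.

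The main obstacle is the cocycle computation. It is not conceptually deep but requires careful bookkeeping: one must track exactly when reduction to $N$-reduced form lowers the denominator (triggering the carry case and the fraction $\tfrac{q}{N^r}$), verify that the telescoping identity produces precisely the quantities $J_{r,k}$ appearing in Lemma (\ref{QgroupCocycle}), and confirm the signs agree in each case. Everything else is routine.
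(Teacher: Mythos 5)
Your proof is correct, and it takes a genuinely different route from the paper. The paper argues homologically: it dualizes the sequence $0\to\Z\to\Q_N\to\Z(N^\infty)\to 0$ to get the Cartan--Eilenberg exact sequence $0\to\mathrm{Hom}(\Z,\Z)\to\mathrm{Ext}(\Z(N^\infty),\Z)\to\mathrm{Ext}(\Q_N,\Z)\to 0$, identifies $\mathrm{Ext}(\Z(N^\infty),\Z)$ with $\mathrm{Hom}(\Z(N^\infty),\Q/\Z)\cong\Z_N$ by Pontryagin duality, realizes each $J\in\Z_N$ as a cocycle $\zeta_J$ pulled back from the standard cocycle on $\Q/\Z$, and then shows by an explicit coboundary $\delta(-\mu_J)$ that $\zeta_J$ is cohomologous to $\xi_J$; surjectivity onto $\mathrm{Ext}(\Q_N,\Z)$ is read off the exact sequence. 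You instead start from an arbitrary extension and build a normalized compatible system of lifts $t_k$ of $\tfrac{1}{N^k}$ with digits $j_k=Nt_{k+1}-t_k\in\{0,\dots,N-1\}$, so that the section $s(p/N^k)=pt_k$ has cocycle \emph{equal} to $\xi_J$ (I checked the three cases of the telescoping computation; the signs and the identification of $\sum_{m=k_1}^{k_2-1}N^{m-k_1}j_m$ with $(J_{k_2}-J_{k_1})/N^{k_1}$ all come out right), giving the equivalence of extensions directly. Your opening caveat is also well taken: injectivity of $J\mapsto[\xi_J]$ modulo $\iota(\Z)$ plus the abstract isomorphism $\mathrm{Ext}(\Q_N,\Z)\cong\Z_N/\Z$ does not yield surjectivity, since injective endomorphisms of infinite Abelian groups need not be onto. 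What each approach buys: the paper's proof places the result inside the structure theory of $\mathrm{Ext}$ and explains conceptually why $\mathrm{Ext}(\Q_N,\Z)\cong\Z_N/\Z$, at the cost of the duality machinery and the $\zeta_J$-versus-$\xi_J$ comparison; yours is self-contained, avoids Pr\"ufer groups and duality entirely, and produces the equivalence of extensions explicitly, which is arguably better suited to the way the theorem is actually used later (recovering $\alpha$ with prescribed $\alpha_0=c$ from $J$, which you also handle correctly via $\alpha_k=(c+J_k)/N^k$ and the bound $J_k<N^k$).
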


\begin{proof}
The Pontryagin dual of $\Z_N$ is given by the Pr{\"u}fer $N$-group $\Z(N^\infty)$ defined as the subgroup of $\T$ of all elements of order a power of $N$:
\[
\Z\left(N^\infty\right) = \left\{ \exp\left(2i\pi\frac{p}{N^k}\right) : p\in\Z,k\in\N \right\} 
\]
endowed with the discrete topology. $\Z(N^\infty)$ is also the inductive limit of:
\[
\Z / N\Z \subset \Z / N^2 \Z \subset \Z / N^3 \Z \subset \cdots
\]
and its dual pairing with $\Z_N$ is given by:
\[
\left< J, \frac{p}{N^k} \right> = \exp\left(2i\pi \frac{p(J_k)}{N^{k}} \right)
\]
where $J\in\Z_N$ and $\frac{p}{N^k}\in\Z(N^\infty)$.

\bigskip We note that from the theory of infinite Abelian groups \cite[p. 219]{Fuchs70}, to the short exact sequence of Abelian groups:
\[
\begin{CD}
0 @>>> \Z @>>> \Q_N @>>> \Z(N^\infty) @>>> 0
\end{CD}
\]
there corresponds the Cartan-Eilenberg long exact sequence in $\mathrm{Ext}$ theory for groups:
\[
\xymatrix{
\text{Hom}(\Z(N^\infty),\Z) \ar[r] & \text{Hom}(\Q_N,\Z)  \ar[r] & \text{Hom}(\Z,\Z) \ar[dll] \\
 \mathrm{Ext}(\Z(N^\infty),\Z) \ar[r] & \mathrm{Ext}(\Q_N,\Z)  \ar[r] &\mathrm{Ext}(\Z,\Z).
}
\]
Since $\text{Hom}(\Q_N,\Z)=0$ and $\mathrm{Ext}(\Z,\Z) =0$, we deduce that we have a short exact sequence:
\[
\begin{CD}
0  @>>>\text{Hom}(\Z,\Z) @>>> \mathrm{Ext}(\Z(N^\infty),\Z) @>>>\mathrm{Ext}(\Q_N,\Z)  @>>> 0\text{.}
\end{CD}
\]
Since $\Z(N^\infty)$ is a torsion group, the group  $\mathrm{Ext}(\Z(N^\infty),\Z)$ can be identified with $\text{Hom}(\Z(N^\infty),\Q/\Z)\cong \widehat{\Z(N^\infty)}$ \cite[p. 224]{Fuchs70}, which in turn can be identified with the Pontryagin dual of $\Z(N^\infty)$, namely $\Z_N$. The identification between $\text{Hom}(\Z(N^\infty),\Q/\Z)$ and $\mathrm{Ext}(\Z(N^\infty),\Z)$ is constructed as follows. Let $s$ be a cross-section of $\pi_{\ast}$ with $s(0)=0_{\Q}$ in the short exact sequence:
\[
\begin{CD}
0 @>>> \Z @>>> \Q @>\pi_{\ast}>> \Q/\Z @>>> 0
\end{CD}
\]
where $\pi_{\ast}$ is the natural projection. Any such choice will do, and we take $s(z)=x$ with $x\in\Q\cap [0,1)$ uniquely defined by $x\equiv z \mod \Z$. We can then define the two-cocycle:
\[
\omega \left| \begin{array}{ccl}
\Q / \Z \times \Q / \Z & \longrightarrow & \Z \\
\left(z_1,z_2\right) &\longmapsto & s(z_1) + s(z_2) - s(z_1+z_2)\text{.}
\end{array}\right.
\]
We can now identify $\Z_N$ and  $\mathrm{Ext}(\Z(N^\infty),\Z)$ as follows. For $J=(J_n)_{n\in\N,n>0}$, we define the $\Z$-valued $2$-cocycle of $\Q_N$ by:
\[
\zeta_{J}:\left(\frac{p_1}{N^{k_1}}, \frac{p_2}{N^{k_2}}\right)\in\Q_N^2 \longmapsto \omega\left(J\left[\pi_{\ast}\left(\frac{p_1}{N^{k_1}}\right)\right],J\left[\pi_{\ast}\left(\frac{p_2}{N^{k_2}}\right)\right]\right)\text{.}
\]
We then compute that:
$$s\circ \pi_{\ast}(x)\;=\;[x]\; \text{mod}\;1,\;x\in \Q,$$ where for $x\in \Q,\;[x ] \;\text{mod} \;1$ is defined to be that unique element of $[0,1)$ congruent to $x$ modulo $1$. 

Let us now fix $J\in\Z_N$. As before, we define $(j_n)_{n\in\N}$ by requiring for all $n\in\N,n>0$ that $J_n=\sum_{k=0}^{n-1}N^k j_k$ and  $j_n\in \{0,1,\cdots, N-1\}$. We now calculate that the two-cocycle $\zeta_{J}$ is given as follows:
\begin{eqnarray*}
& &\zeta_{J}\left(\frac{p_1}{N^{k_1}}, \frac{p_2}{N^{k_2}}\right) = \\ & & \mathcal{Z}_J\left(\frac{p_1}{N^{k_1}}, \frac{p_2}{N^{k_2}}\right) - \left\{
\begin{array}{lcl}
\frac{(p_2N^{k_1-k_2}+p_1)}{N^{k_1}}J_{k_1}  \mod 1&\text{if}& k_1>k_2,\\ 
\frac{(p_1N^{k_2-k_1}+p_2)}{N^{k_2}}J_{k_2} \mod 1 & \text{if} & k_1<k_2,\\
\frac{p_1+p_2}{N^{k}} J_{k}\mod 1 & \text{if} & k_1=k_2=k
\end{array}\right.
\end{eqnarray*}
where $\mathcal{Z}_J\left(\frac{p_1}{N^{k_1}}, \frac{p_2}{N^{k_2}}\right)=\left[\frac{p_1J_{k_1}}{N^{k_1}} \mod 1\right] + \left[\frac{p_2J_{k_2}}{N^{k_2}} \mod 1\right]$.
We remark that although each term in the expression defining the cocycle may not be an integer, the combination turns into an integer.
We now claim that if $J\in \Z_N$ is in the image of $\iota:\Z\to \Z_N$ described in the Lemma (\ref{ZintoNadicZ}), then $\zeta_{J}$ is a coboundary. This is to be expected from the short exact sequence giving $\mathrm{Ext}(\Q_N,\Z)$ as a quotient of $\mathrm{Ext}(\Z(N^\infty),\Z).$  In this case, we recall that for $(J_n)_{n\in\N,n>0}=\iota(P)$ for $P\geq 0$, there is $M\in\N$ such that  $J_n=P$ for all $n\geq M$. In that case for all  $k_1, k_2 \geq M$ and $p_1,p_2\in\Z$:
\[
\zeta_{J}(\frac{p_1}{N^{k_1}}, \frac{p_2}{N^{k_2}}))\; =\;
  \left[\frac{p_1P}{N^{k_1}} \mod 1 \right] +\left[\frac{p_2P}{N^{k_2}} \mod 1\right] -\left[\left(\frac{p_1}{N^{k_1}}+\frac{p_2}{N^{k_2}}\right) P \mod 1\right]\text{.}
\]
But this eventually constant sequence is a coboundary, since defining  $\mu_J: \Q_N \to \Z$ by:
\[
\mu_J : \frac{p}{N^k}\mapsto\left[\frac{pP}{N^k}\mod 1\right]-\frac{pP}{N^k} \text{,}
\]
we check that:
\[
\mu_J\left(\frac{p_1}{N^{k_1}}\right)+\mu_J\left(\frac{p_2}{N^{k_2}}\right)-\mu_J\left(\frac{p_1}{N^{k_1}}+\frac{p_2}{N^{k_2}}\right)=\zeta_{J}\left(\frac{p_1}{N^{k_1}}, \frac{p_2}{N^{k_2}}\right)
\]
for all $\frac{p_1}{N^{k_1}},\frac{p_2}{N^{k_2}}\in\Q_N$. Similarly if $J=\iota(P)$ for a negative integer $P$, the statement of Lemma (\ref{ZintoNadicZ}) shows that $j_n=N-1$ for all $n\geq M,$ and one proves in a similar fashion that $\zeta_{J}$ is a coboundary.

We now claim that the  two-cocycle of Lemma (\ref{QgroupCocycle}) (denoted hereafter by $\xi_{J}$) is cohomologous to $\zeta_{J}.$  Recall that $\xi_{J}$ is defined by :
\[
\xi_J \left| \begin{array}{ccl}
\Q_N\times \Q_N & \longmapsto & \Z \\
\left(\frac{p_1}{N^{k_1}},\frac{p_2}{N^{k_2}} \right) & \longmapsto & 
\left\{
\begin{array}{lcl}

-\frac{p_1}{N^{k_1}}\left(J_{k_2}-J_{k_1}\right)&\text{if}&k_2>k_1\\
-\frac{p_2}{N^{k_2}}\left(J_{k_1}-J_{k_2}\right)&\text{if}&k_1>k_2\\
\frac{q}{N^r}\left(J_{k_1}-J_{r}\right)&\text{if}&\wedge\left\{\begin{array}{l}k_1=k_2\\ \frac{p_1}{N^{k_1}}+\frac{p_2}{N^{k_2}}=\frac{q}{N^r}\end{array}\right.\\
\end{array}\right.
\end{array}
\right.
\]
To establish this, we first remark that  for all $\frac{p}{N^k} \in \Q_N$ and for all $m\geq 0$, we have $\left[\frac{p}{N^k} \cdot J_k \mod 1\right] = \left[\frac{pN^m}{N^{k+m}}\cdot J_{k+m}\mod 1\right]$. We establish this by recalling that each $J_k= \sum_{i=0}^{k-1}j_iN^i$ so that $J_{k+m}=\sum_{i=0}^{k+m-1}j_iN^i,$  and the result is an easy computation. 

\bigskip Now consider the following one-cochain, generalizing our definition given earlier on this page:
\[
\mu_J \left| \begin{array}{lcl}
\Q_N & \longrightarrow & \Z \\
\frac{p}{N^k} &\longmapsto& \left[\frac{p}{N^k} J_k \mod 1 \right] - \frac{p}{N^k} J_k
\end{array}\right.
\]
where the $\frac{p}{N^k}$ is taken in reduced form.  Then the cobounding map takes $-\mu_J$ to the following two-coboundary $\delta(-\mu_J)$ on $\Q_N\times \Q_N\to \Z$  given by: 
 \begin{eqnarray*}
\delta(-\mu_J)(\frac{p_1}{N^{k_1}}, \frac{p_2}{N^{k_2}}) &=& \frac{p_1}{N^{k_1}} J_{k_1} 
\\ &-& \left[\frac{p_1}{N^{k_1}} J_{k_1}\mod 1\right] + 
 \frac{p_2}{N^{k_2}} J_{k_2} - \left[\frac{p_2}{N^{k_2}}J_{k_2}\mod 1 \right]\\
  &-& (\frac{p_1}{N^{k_1}}+\frac{p_2}{N^{k_2}}) J_r + \left[ (\frac{p_1}{N^{k_1}}+\frac{p_2}{N^{k_2}}) J_r \mod 1 \right]\text{,}
\end{eqnarray*}
where we want $\frac{p_1}{N^{k_1}}+\frac{p_2}{N^{k_2}} = \frac{q}{N^r}$  in reduced form. Then one verifies that 
\[
\zeta_{J} \delta(-\mu_J) = \xi_{J},
\]
so that the cocycles $\zeta_{J}$ and $\xi_{J}$ are cohomologous. 
\end{proof}

\begin{remark}
Using Theorem (\ref{QadicExt}) and Theorem (\ref{QgroupCohomologous}), we have shown that $\mathrm{Ext}(\Q_N,\Z)$ is isomorphic to $\Z_N/ \Z$ where we identified $\Z$ with $\iota(Z)\subseteq\Z_N$.
\end{remark}

\bigskip We now turn our attention to some properties of the C*-algebras $\algebra_\alpha$ for some special classes of $\alpha$. There are three distinct subclasses of noncommutative solenoids based upon their basic structure:
\begin{definition}
Let $N\in\N,N>1$. Let $\alpha\in\Xi_N$. 
\begin{enumerate}
 \item If $\alpha$ is a periodic sequence (and thus in particular rational), we call $\algebra_\alpha$ a \emph{periodic rational} noncommutative solenoid. These are exactly the nonsimple noncommutative solenoids.
 \item If $\alpha$ is a sequence of rationals, though not periodic, then we call $\algebra_\alpha$ an \emph{aperiodic rational} noncommutative solenoid.
 \item If $\alpha$ is a sequence of irrationals (and thus can never be periodic), then we call $\algebra_\alpha$ an \emph{irrational} noncommutative solenoid.
\end{enumerate}
\end{definition}

We note that simplicity is associated to a form of finiteness, or rationality condition: we need both the (eventual) periodicity of the decimal expansion of the entries of $\alpha$ and the periodicity of $\alpha$ itself. The aperiodic rational case is the more mysterious of the three and an interesting surprise.

\bigskip We start with the case where $\alpha$ is irrational. We use the following well known result \cite{Elliott93} (see also \cite{Dixmier67} for a similar argument used for $\mathrm{AF}$-algebras, which can be applied for $\mathrm{AT}$-algebras as well), whose proof is included for the reader's convenience. We refer to \cite{Lin2001} for the foundation of the theory of \textrm{AT}-algebras. A \emph{circle algebra} is the C*-algebra of $n \times n$ matrix - valued continuous functions on some connected compact subset of $\T$. An \textrm{AT}-algebra is the inductive limit of a sequence of direct sums of circle algebras.

\begin{lemma}\label{AT}
The inductive limit of \textrm{AT}-algebras is \textrm{AT}.
\end{lemma}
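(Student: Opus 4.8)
The plan is to run, for finite direct sums of circle algebras, the exact analogue of the local--approximation argument used in \cite{Dixmier67} for $\mathrm{AF}$-algebras, the new ingredient being the \emph{semiprojectivity} of the building blocks in place of the rigidity of finite-dimensional subalgebras. First I would write the given limit as $A=\varinjlim_n(A_n,\phi_n)$ with each $A_n$ an $\mathrm{AT}$-algebra, and in turn write each $A_n=\varinjlim_k(B^{(n)}_k,\cdot)$, where every $B^{(n)}_k$ is a finite direct sum of circle algebras; denote by $\phi_{n,\infty}:A_n\to A$ and $\psi^{(n)}_{k,\infty}:B^{(n)}_k\to A_n$ the canonical maps into the respective limits. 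The union of the images $\phi_{n,\infty}\circ\psi^{(n)}_{k,\infty}(B^{(n)}_k)$ over all $n,k$ is then dense in the separable algebra $A$, so $A$ is locally approximated by finite direct sums of circle algebras; the problem is to upgrade this density into a genuine single sequence of such building blocks with limit $A$.

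The key tool I would isolate first is that every finite direct sum of circle algebras is semiprojective: a circle algebra is one of $M_r$, $C([0,1])\otimes M_r$, or $C(\T)\otimes M_r$, all of which are semiprojective, and finite direct sums of semiprojective C*-algebras are again semiprojective. The consequence I actually use is the approximate factorization property: if $C$ is such a building block and $\theta:C\to\varinjlim_j(D_j,\cdot)$ is a $\ast$-morphism into an inductive limit, then for every finite $F\subseteq C$ and $\varepsilon>0$ there are an index $j$ and a $\ast$-morphism $\widetilde\theta:C\to D_j$ whose composite with the canonical map $D_j\to\varinjlim_j D_j$ agrees with $\theta$ to within $\varepsilon$ on $F$.

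With this in hand I would build, by induction on $m$, a single sequence $(C_m,\eta_m)$ of finite direct sums of circle algebras together with canonical morphisms $\gamma_m=\phi_{n_m,\infty}\circ\psi^{(n_m)}_{k_m,\infty}:C_m\to A$, where $C_m=B^{(n_m)}_{k_m}$ and the indices $(n_m,k_m)$ increase. At stage $m$ I apply the approximate factorization property twice to the map $\gamma_m:C_m\to A=\varinjlim_n A_n$ and then to the resulting map $C_m\to A_n=\varinjlim_k B^{(n)}_k$, obtaining a building block $C_{m+1}=B^{(n_{m+1})}_{k_{m+1}}$ and a genuine connecting morphism $\eta_m:C_m\to C_{m+1}$ with $\|\gamma_{m+1}\circ\eta_m-\gamma_m\|<2^{-m}$ on a prescribed finite set; by choosing the new indices large enough I simultaneously arrange that $\gamma_{m+1}(C_{m+1})$ approximates the first $m+1$ terms of a fixed countable dense subset of $A$ and that every $B^{(n)}_k$ is reached cofinally. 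These requirements make the pair $\big((C_m,\eta_m),\,(\gamma_m)\big)$ a two-sided approximate intertwining between the building-block system and the reorganized double system for $A$, so Elliott's approximate intertwining theorem \cite{Elliott93} yields $\varinjlim_m(C_m,\eta_m)\cong A$; since each $C_m$ is a finite direct sum of circle algebras, $A$ is $\mathrm{AT}$, and one may consult \cite{Lin2001} for the foundational framework. The delicate point, and the step I expect to be the main obstacle, is precisely the interchange of the two inductive limits: one must keep the forward errors summable \emph{and} make the approximation genuinely two-sided, recovering each building block of the double system cofinally, for otherwise the intertwining would only exhibit $A$ as a quotient of $\varinjlim_m C_m$ rather than as an isomorphic copy. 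Semiprojectivity of the building blocks is exactly what makes this organization possible.
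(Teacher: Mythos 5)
Your strategy is viable, but it is much heavier than what the paper actually does, and its one genuinely delicate step is asserted rather than carried out. The paper's proof is a two-line $\varepsilon/2$ argument: identifying each $A_n$ with a subalgebra of $A$, any finite set $a_1,\ldots,a_k\in A$ is approximated within $\varepsilon/2$ by elements of some $A_K$, and these in turn within $\varepsilon/2$ by elements of a finite direct sum of circle algebras sitting inside $A_K\subseteq A$; the conclusion then follows from the local characterization of $\mathrm{AT}$-algebras, \cite[Theorem 4.1.5]{Lin2001}, which says that a separable C*-algebra in which every finite subset is approximately contained in a C*-subalgebra that is a finite direct sum of circle algebras is itself $\mathrm{AT}$. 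What you propose is, in effect, a proof of that cited theorem in the case at hand: semiprojectivity (or weak stability) of the building blocks $M_r$, $C([0,1])\otimes M_r$, $C(\T)\otimes M_r$, approximate factorization of morphisms through finite stages, and an Elliott-type intertwining are exactly the machinery behind the local characterization. That is a legitimate and more self-contained route, and your identification of the tools and of the main obstacle (the interchange of the two inductive limits) is accurate.

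The caveat is that you claim the data $\bigl((C_m,\eta_m),(\gamma_m)\bigr)$ forms a \emph{two-sided} approximate intertwining, but you never construct the backward morphisms that a two-sided intertwining requires. Summable forward errors together with density of $\bigcup_m\gamma_m(C_m)$ in $A$ only yield a surjective $\ast$-morphism $\varinjlim_m C_m\to A$; injectivity of the induced map does not come for free, as you yourself note. Closing that gap is precisely where the real content lies: one must either build the backward approximate morphisms (using semiprojectivity again, with the cofinality you arranged), or fall back on the local characterization theorem. So either finish that step explicitly, or do what the paper does: verify the local approximation property by the triangle inequality and cite \cite[Theorem 4.1.5]{Lin2001}.
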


\begin{proof}
Let $(A_n)_{n\in\N}$ be a sequence of \textrm{AT}-algebras of inductive limit $A$. To simplify notations, we identify $A_n$ with a subalgebra of $A$ for all $n\in\N$. Let $\varepsilon>0$, $k\in\N$ with $n>0$ and $a_1,\ldots,a_k \in A$. Since $A$ is an inductive limit, there exists $K\in\N$ and $b_1,\ldots,b_k \in A_K$ such that $\|a_j-b_j\|\leq\frac{1}{2}\varepsilon$ for $j=1,\ldots,k$. Now, since $A_K$ is an \textrm{AT}-algebra, there exists $L\in\N$, a finite direct sum $C$ of circle algebras and $c_1,\ldots,c_k \in C$ such that $\|b_j-c_j\|<\frac{1}{2}\varepsilon$ for $j=1,\ldots,k$. Hence, $\|a_j-c_j\|<\varepsilon$. By \cite[Theorem 4.1.5]{Lin2001}, we have characterized $A$ as an \textrm{AT}-algebra.
\end{proof}

\begin{proposition}
Let $N\in\N,N>1$ and $\alpha\in\Xi_N$. If $\alpha_0 \not \in \Q$ (or equivalently, if there exists $k\in\N$ such that $\alpha_k\not\in\Q$), then $\algebra_\alpha$ is a simple \textrm{AT}-algebra of real rank $0$.
\end{proposition}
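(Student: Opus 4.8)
The plan is to prove the three asserted properties---simplicity, the \textrm{AT} structure, and real rank zero---separately, each by reducing to a result already in hand.

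First I would record the parenthetical equivalence and deduce simplicity. The defining relation $N\alpha_{n+1}=\alpha_n+j_n$ with $j_n\in\{0,\ldots,N-1\}\subseteq\Z$ shows that $\alpha_{n+1}\in\Q$ if and only if $\alpha_n\in\Q$, so an immediate induction gives that either all entries of $\alpha$ are rational or all are irrational; this is the stated equivalence. Assume now $\alpha_0\notin\Q$. Then for every $k>0$ the number $(N^k-1)\alpha_0$ is irrational, since $N^k-1$ is a nonzero integer, and so $(N^k-1)\alpha_0\notin\Z$. By the equivalence of conditions (1) and (3) in Theorem (\ref{simplicity}), $\algebra_\alpha$ is simple.

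Next I would obtain the \textrm{AT} structure. By Theorem (\ref{ATorus}), $\algebra_\alpha=\varinjlim(A_{\alpha_{2k}},\varphi_k)_{k\in\N}$. Since $\alpha_0$ is irrational, each $\alpha_{2k}$ is irrational, so every $A_{\alpha_{2k}}$ is an irrational rotation C*-algebra; by the theorem of Elliott and Evans \cite{Elliott93b} each such algebra is an \textrm{AT}-algebra (indeed of real rank zero). Lemma (\ref{AT}) then yields that the inductive limit $\algebra_\alpha$ is an \textrm{AT}-algebra. Finally I would establish real rank zero. The algebra $\algebra_\alpha$ is unital, with unit $W_{0,0}$, and by the two previous paragraphs it is a simple unital \textrm{AT}-algebra; by Theorem (\ref{traces}) it has a unique tracial state $\tau$, so its trace space is a single point and $\mathrm{Aff}(T(\algebra_\alpha))\cong\R$. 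I would then invoke the $K$-theoretic characterization of real rank zero for simple unital \textrm{AT}-algebras \cite{Elliott93}: such an algebra has real rank zero exactly when the image of $K_0$ under the pairing with the traces is dense in $\mathrm{Aff}(T(\algebra_\alpha))$, i.e.\ here when $K_0(\tau)\bigl(K_0(\algebra_\alpha)\bigr)$ is dense in $\R$. By Theorem (\ref{Qgroup}), under the identification of $K_0(\algebra_\alpha)$ with $\mathscr{Q}_\alpha$ one has $K_0(\tau)(1,0)=1$ and $K_0(\tau)(0,p)=p\alpha_0$ for all $p\in\Z$, so the image contains the subgroup $\Z+\Z\alpha_0$. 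As $\alpha_0$ is irrational, $\Z+\Z\alpha_0$ is dense in $\R$, whence $K_0(\tau)(K_0(\algebra_\alpha))$ is dense and $\algebra_\alpha$ has real rank zero.

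I expect the real rank zero step to be the main obstacle. Real rank zero is not preserved under arbitrary inductive limits, so it cannot simply be inherited from the irrational rotation building blocks; the substantive point is to route the argument through the classification-theoretic criterion that, for simple unital \textrm{AT}-algebras, real rank zero is equivalent to density of the image of $K_0$ in the affine functions on the trace simplex. Correctly invoking this criterion and then verifying, via the explicit trace computation of Theorem (\ref{Qgroup}), that $\Z+\Z\alpha_0$ is dense in $\R$ is where the actual work is concentrated.
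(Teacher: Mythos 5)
Your proposal is correct and follows essentially the same route as the paper, whose proof is simply a list of citations (Elliott--Evans for the irrational rotation algebras, Theorem (\ref{simplicity}) for simplicity, Lemma (\ref{AT}) for the inductive limit, and Lin's book for real rank zero); you have merely filled in the details, including the correct observation that real rank zero must be obtained from the density of $\Z+\Z\alpha_0$ in $\R$ via the $K_0$-trace criterion for simple unital \textrm{AT}-algebras rather than inherited from the building blocks.
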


\begin{proof}
This follows from \cite{Elliott93b}, Theorem (\ref{simplicity}), Lemma (\ref{AT}) and \cite{Lin2001}.
\end{proof}

We now consider $\alpha\in\Xi_N$ ($N\in\N,N>1$) with $\alpha_0$ rational periodic. By Theorem (\ref{simplicity}), $\algebra_\alpha$ is not simple. It is possible to provide a full description of the C*-algebra $\algebra_\alpha$. We denote by $M_q(\C)$ the C*-algebra of $q\times q$ matrices with complex entries, and we denote by $C(X,A)$ the C*-algebra of continuous function from a compact space $X$ to a C*-algebra $A$.

\begin{theorem}\label{rational-nonsimple}
Let $N\in\N$ with $N>1$ and $\alpha\in\Xi_N$. Let $\alpha_0=\frac{p}{q}$ with $p,q\in\N$, nonzero, $p$ and $q$ relatively prime. Assume there exists $k\in\N$ nonzero such that $(N^k-1)\alpha_0\in\Z$, and that $k$ is the smallest such nonzero natural. Let $\lambda = \exp\left(2i\pi \frac{p}{q}\right)$. We define the following two unitaries:
\[
u_\lambda = \left[ \begin{array}{ccccc} 1 \\ & \lambda \\ & & \lambda^2 \\ & & & \ddots&  \lambda^{q-1}  \end{array} \right]
\;\;
v_\lambda = \left[ \begin{array}{ccccc} 0 & \cdots & & 0 & 1 \\
1 & 0 & \cdots & & 0 \\ 0 & \ddots & & & 0 \\ \\ 0 & \cdots & & & 1
\end{array} \right]
\]
and observe that $v_\lambda u_\lambda = \lambda u_\lambda v_\lambda$.
Then $\algebra_\alpha$ is the C*-algebra of continuous sections of a bundle with base space $\solenoid_{N^k}^2$ and fiber $M_q(\C)$. More precisely, $\algebra_\alpha$ is the fixed point of $C(\solenoid^2_{N^k},M_q(C))$ for the action $\rho$ of $\Z / q\Z^2$ given by:
\[
 \rho_{(n,m)}(\zeta) : (z,w) \in \solenoid_N^2\mapsto v_\lambda^{-m}u_\lambda^{-n} \zeta (\lambda^{-n}z,\lambda^{-m}w) u_\lambda^n v_\lambda^m
\]
for $(n,m)\in(\Z/q\Z)^2$ and $\zeta \in C(\solenoid_{N^k}^2,M_q(\C))$.
\end{theorem}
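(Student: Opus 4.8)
The plan is to construct an explicit $*$-isomorphism from $\algebra_\alpha$ onto the fixed-point algebra $C(\solenoid_{N^k}^2,M_q(\C))^{(\Z/q\Z)^2}$, generalizing the classical realization of a rational rotation algebra $A_{p/q}$ as a matrix bundle over $\T^2$. First I would record the arithmetic forced by the hypotheses. By Theorem (\ref{symmetrizer}), the existence of a smallest nonzero $k$ with $(N^k-1)\alpha_0\in\Z$ is equivalent to $\alpha$ being periodic of period $k$, and since $\alpha_0=\frac pq$ in lowest terms one gets $q\mid N^k-1$, hence $\gcd(q,N)=1$. An induction on $N\alpha_{n+1}\equiv\alpha_n\pmod\Z$ then shows $\alpha_n=\frac{a_n}{q}$ with $\gcd(a_n,q)=1$ and $a_n\equiv pN^{-n}\pmod q$, with $a_n$ depending only on $n\bmod k$. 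Theorem (\ref{symmetrizer})(6) now identifies $\mathcal{S}_\alpha=q\Q_N\times q\Q_N$, and since $\gcd(q,N)=1$ the quotient $\Q_N^2/\mathcal{S}_\alpha$ is $(\Z/q\Z)^2$, of order $q^2$; this index $q^2$ is the square of the fiber dimension.

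Conceptually the statement is explained by the crossed-product picture of Proposition (\ref{crossed-product}): the action $\theta^\alpha$ of $\Q_N$ on $\solenoid_N$ has kernel exactly $q\Q_N$, because $\theta^\alpha_{p/N^\ell}$ is trivial if and only if $q\mid p$. Thus $\theta^\alpha$ is inflated from the \emph{free} translation action of $\Z/q\Z\cong\Q_N/q\Q_N$ on $\solenoid_N$ by the order-$q$ element $\mathbf a=(e^{2i\pi\alpha_n})_{n\in\N}$. A free finite-group action yields a type~I homogeneous bundle algebra, and the quotient of a solenoid by a finite subgroup is again a solenoid: the annihilator of $\mathbf a$ in $\widehat{\solenoid_{N^k}}=\Q_N$ is $q\Q_N\cong\Q_N$, so $\solenoid_{N^k}^2/(\Z/q\Z)^2\cong\solenoid_{N^k}^2$, which is why the base reappears as $\solenoid_{N^k}^2$. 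I would use this only as motivation and build the isomorphism by hand, since the explicit form of $\rho$ through the Weyl pair $u_\lambda,v_\lambda$ invites a direct construction.

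Concretely, using the universal property of $\algebra_\alpha$ (Notation (\ref{Wunitaries})), I would define a $*$-homomorphism $\Phi$ into $C(\solenoid_{N^k}^2,M_q(\C))$ on the generating families $W_{1/N^j,0}$ and $W_{0,1/N^j}$. The only relations to reproduce are that each family is a commuting $N$-tower, $W_{1/N^{j+1},0}^N=W_{1/N^j,0}$, and that $W_{1/N^i,0}$ and $W_{0,1/N^j}$ commute up to the scalar $e^{2i\pi\alpha_{i+j}}=\lambda^{N^{-(i+j)}}$, where $\lambda=e^{2i\pi p/q}$. Since $v_\lambda u_\lambda=\lambda u_\lambda v_\lambda$ with $\gcd(p,q)=1$, I would send these generators to sections whose matrix parts are $u_\lambda^{s_j}$ and $v_\lambda^{t_j}$ with $s_j\equiv t_j\equiv N^{-j}\pmod q$: then $u_\lambda^{s_i}v_\lambda^{t_j}=\lambda^{-s_it_j}v_\lambda^{t_j}u_\lambda^{s_i}$ reproduces the required commutation constant, while $Ns_{j+1}\equiv s_j\pmod q$ makes the assignment respect the $N$-tower. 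The scalar parts are products of base coordinate characters $z\mapsto\langle z,\cdot\rangle$, $w\mapsto\langle w,\cdot\rangle$ on $\solenoid_{N^k}^2$ glued so that the whole section is $\rho$-invariant; the elements $W_{q/N^j,0},W_{0,q/N^j}$ spanning $C^\ast(\mathcal{S}_\alpha,\Psi_\alpha|_{\mathcal{S}_\alpha})\cong C(\solenoid_{N^k}^2)$ map to purely scalar coordinate functions, consistent with $\Psi_\alpha$ being symmetric on $\mathcal{S}_\alpha$.

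Finally I would verify the three points that make $\Phi$ the claimed isomorphism. First, each $\Phi(W)$ is genuinely $\rho$-invariant: this is a direct equivariance check using $v_\lambda^mu_\lambda v_\lambda^{-m}=\lambda^m u_\lambda$, which forces the scalar part to carry the compensating character $\lambda^{ms_j}$ under $w$-translation, so the image lies in the fixed-point algebra. Second, $\Phi$ is surjective, since its image contains all coordinate functions on $\solenoid_{N^k}^2/(\Z/q\Z)^2$ and, fiberwise, the irreducible pair $u_\lambda,v_\lambda$ generates all of $M_q(\C)$. Third, $\Phi$ is injective. I expect the simultaneous realization of each \emph{single} generator as an honest $\rho$-invariant section — rather than a merely covariant one — to be the main obstacle: it is exactly this constraint that forces the bundle to be nontrivial in general and that consumes the congruences $q\mid N^k-1$ and $\gcd(N,q)=1$. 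For injectivity I would either exhibit the inverse directly on generators, or argue that both algebras have primitive ideal space $\solenoid_{N^k}^2$ with constant fiber $M_q(\C)$ and that $\Phi$ is a fiberwise isomorphism; as a cross-check one can match the canonical trace of Theorem (\ref{traces}) and the $K$-theory of Theorem (\ref{Ktheory}) on both sides.
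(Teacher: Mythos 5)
Your route is genuinely different from the paper's, and the comparison is instructive. The paper never constructs the isomorphism on generators of $\algebra_\alpha$ at all: it first uses the $k$-periodicity of $\alpha$ to regroup the inductive limit of Theorem (\ref{ATorus}) so that every stage is the \emph{single} rational rotation algebra $A_{\alpha_0}=A_{p/q}$ (viewing the constant sequence $\beta=(\alpha_0)$ as an element of $\Xi_{N^k}$), with composite connecting maps raising the canonical unitaries to the $N^k$-th power. It then quotes the classical identification of $A_{p/q}$ with the $(\Z/q\Z)^2$-fixed points of $C(\T^2,M_q(\C))$ as a known fact, checks that the connecting endomorphism $\zeta\mapsto\bigl((z,w)\mapsto\zeta(z^{N^k},w^{N^k})\bigr)$ commutes with the finite group action --- this is the \emph{only} place the hypothesis $(N^k-1)\alpha_0\in\Z$, i.e.\ $\lambda^{N^k}=\lambda$, is invoked --- and concludes by passing to inductive limits, where $\varinjlim C(\T^2,M_q(\C))=C(\solenoid_{N^k}^2,M_q(\C))$ and taking fixed points commutes with the limit. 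Your approach instead builds the isomorphism by hand on the generators $W_{1/N^j,0}$, $W_{0,1/N^j}$, which is more explicit and self-contained (it would reprove the torus case rather than cite it), and your preliminary arithmetic ($q\mid N^k-1$, $\gcd(N,q)=1$, $\alpha_n=a_n/q$ with $a_n\equiv pN^{-n}\bmod q$, $\mathcal{S}_\alpha=q\Q_N\times q\Q_N$) is correct and is exactly what such a construction consumes; in the paper all of it is absorbed into the single identity $\lambda^{N^k}=\lambda$.

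That said, as written your argument has a gap, and it is the one you yourself flag: you never actually produce the $\rho$-invariant sections representing the generators. Saying the scalar parts are ``coordinate characters glued so that the whole section is $\rho$-invariant'' defers the entire content of the construction. Concretely, if $\Phi(W_{0,1/N^{kj}})$ has matrix part $v_\lambda^{t_j}$, then invariance under $\rho_{(n,0)}$ forces the scalar part to transform by $\lambda^{-nt_j}$ under $z\mapsto\lambda^{-n}z$, so it must involve a character of the \emph{first} solenoid coordinate indexed by some $m/N^{kl}$ with $m\equiv t_j\bmod q$ (here one uses that $\lambda^{N^k}=\lambda$ embeds $\lambda$ as a constant sequence in $\solenoid_{N^k}$, so $\langle\lambda^{-n},m/N^{kl}\rangle=\lambda^{-nm}$); a symmetric constraint ties the $w$-dependence to $u_\lambda^{s_j}$. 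You must exhibit these characters explicitly, check compatibility with the $N$-tower relations $\Phi(W)^N=\Phi(W^N)$ and with the images of $W_{q/N^j,0},W_{0,q/N^j}$ in the center, and only then can the surjectivity and injectivity arguments you sketch be run. None of this is infeasible --- and your suggested injectivity routes (a faithful expectation, or a fiberwise argument over the primitive ideal space) are standard --- but the proposal stops exactly where the work starts. There is also a sign slip to repair: with $s_j\equiv t_j\equiv N^{-j}\bmod q$ the Weyl relation gives the commutation constant $\lambda^{-N^{-(i+j)}}$ rather than $e^{2i\pi\alpha_{i+j}}=\lambda^{N^{-(i+j)}}$, so one of the two exponents must carry a minus sign (or the roles of $u_\lambda,v_\lambda$ must be exchanged).
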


\begin{proof}

By Theorem (\ref{symmetrizer}), our assumption implies that $\alpha$ is $k$-periodic. Let $(\beta_n)_{n\in\N} = (\alpha_0)_{n\in\N}$ --- i.e. $\beta$ is constant, and moreover $\beta\in\Xi_{N^k}$. Let $\theta_k = \varphi_{nk} \circ \ldots \varphi_{(n+1)k-1}$ for all $n\in\N$ where we use the notations of Theorem (\ref{ATorus}). We have:
\[
\algebra_\alpha = \varinjlim(A_{\alpha_{2k}},\varphi_k)=\varinjlim(A_{\beta_{2k}},\theta_k)=\algebra_\beta
\]
as desired. We shall henceforth write $\beta$, by abuse of language, to mean the constant value the sequence $\beta$ takes --- namely $\alpha_0$.

Let $E=C(\solenoid_{N^k}^2,M_q(\C))$ and let $E_\tau$ be the fixed point C*-subalgebra of $E$ for the action $\tau$ of $(\Z/q\Z)^2$. It is well known that the fixed point C*-algebra $E_\tau$ of $\tau$ is *-isomorphic to $A_\beta$.

Let $\varphi:E\rightarrow E$ be defined by setting:
\[
\varphi(\zeta): (z,w)\in\T^2 \mapsto \zeta(z^{(N^k)},w^{(N^k)})
\]
for all $\zeta\in E$. Now, using our assumption that $(N^k-1)\alpha_0\in\Z$ so $\lambda^{(N^k)}=1$, we show that $\varphi$ and $\tau$ commute:
\begin{eqnarray*}\label{rational-commutation}
\tau_{(1,0)}(\varphi(f\otimes A)): (z,w)\in\T^2 &\mapsto& f( (\lambda^{-1}z)^{(N^k)}, w^{(N^k)})\otimes A \\
& = & f (\lambda^{-1} z^{(N^k)} , w^{(N^k)})\otimes A \\
& = & \varphi(\tau_{(1,0)}(f\otimes A))\text{,}
\end{eqnarray*}
for all $f\in C(\T^2)$ and $A\in M_q(\C)$. Hence $\tau_{(1,0)}\circ \varphi = \varphi \circ \tau_{(1,0)}\circ \varphi$ by extending (\ref{rational-commutation}) linearly and by continuity. A similar computation would show that $\tau_{(0,1)}\circ\varphi = \varphi\circ \tau_{(0,1)}$. Hence, $\varphi$ restricts to an endomorphism of $E_\tau$. 
Now, the inductive limit of:
\[
\begin{CD}
E @>\varphi>> E @>\varphi>>  E @>\varphi>> \cdots
\end{CD}
\]
is $C(\solenoid^2_{N^k},M_q(\C))$. Since $\varphi$ and $\tau$ commute, the action $\tau$ extends to the inductive limit by:
\[
\rho_{(p,q)}(\zeta): (z,w)\in\solenoid^2_N \mapsto v_\lambda^{-q} u_\lambda^{-p} \zeta(\lambda^{-p}z, \lambda^{-q}w) u_\lambda^p v_\lambda^q
\]
for all $\zeta\in C(\solenoid_{N^k}^2,M_q(\C))$ and moreover, the inductive limit of:
\[
\begin{CD}
A_\alpha = E_\tau @>\varphi>> A_\alpha @>\varphi>> A_\alpha @>\varphi>> \cdots
\end{CD}
\]
which is $\algebra_\alpha$ by Theorem (\ref{ATorus}) is also the fixed point of $C(\solenoid_{N^k}^2, M_q(\C))$ by the action $\rho$ of $(\Z/q\Z)^2$ on $C(\solenoid_{N^k}^2,M_q(\C))$. Hence our theorem.
\end{proof}

We note that the proof of Theorem (\ref{rational-nonsimple}) shows that the embeddings from Theorem (\ref{ATorus}) map from and to the centers of the rotation C*-algebras. This is in contrast with the situation when $\alpha_0$ is rational but $\alpha$ is not pseudo-periodic, which illustrates why the associated noncommutative solenoids are simple.

\section{The isomorphism problem}

Our classification of noncommutative solenoids is based on our computation of their $K$-theory. We start with the following simple observation:

\begin{lemma}\label{NadicIso}
Let $\sigma:\Q_N\rightarrow \Q_N$ be a group isomorphism. Then there exists $p\in\Z$ with $p\mid N$ and $p\not\in\{-N,N\}$ and $k\in\N$ such that $\sigma(1) = \frac{p}{N^k}$. Consequently $\sigma\left(\frac{1}{N^r}\right) = \frac{p}{N^{k+r}}$ for all $r\in\N$.
\end{lemma}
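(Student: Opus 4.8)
The plan is to exploit that $\Q_N$ is a torsion-free abelian group of rank one which is moreover a subring of $\Q$. For such a group every additive endomorphism is forced to be multiplication by a fixed scalar lying in $\Q_N$, and the hypothesis that $\sigma$ is an isomorphism then forces that scalar to be a unit; reading off its $N$-reduced form gives the claimed expression for $\sigma(1)$ and its consequence.

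First I would show $\sigma$ is multiplication by $\sigma(1)$. Set $a=\sigma(1)\in\Q_N$. For each $r\in\N$ the relation $N^r\cdot(1/N^r)=1$ gives, upon applying $\sigma$, that $N^r\,\sigma(1/N^r)=a$. Since $\Q_N$ is torsion free and $a\cdot(1/N^r)\in\Q_N$ (because $\Q_N$ is closed under multiplication), the unique $y\in\Q_N$ with $N^r y=a$ is $y=a/N^r$; hence $\sigma(1/N^r)=a/N^r$. By additivity, $\sigma(p/N^r)=p\,\sigma(1/N^r)=a\cdot(p/N^r)$ for all $p\in\Z$ and $r\in\N$, so $\sigma$ is multiplication by $a$. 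Writing $a$ in its $N$-reduced form as $p/N^k$ yields $\sigma(1)=p/N^k$ and, directly, $\sigma(1/N^r)=p/N^{k+r}$, which is the "consequently" clause.

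Next I would pin down $p$ using bijectivity. Applying the previous paragraph to $\sigma^{-1}$ shows it too is multiplication by some $b\in\Q_N$, and $\sigma\circ\sigma^{-1}=\mathrm{id}$ forces $ab=1$; thus $a=p/N^k$ is a unit of the ring $\Q_N$. These units are easily described: $p/N^k$ is invertible in $\Q_N$ precisely when $N^k/p\in\Q_N$, equivalently $1/p\in\Q_N=\Z[1/N]$, equivalently $p\neq0$ and every prime factor of $p$ divides $N$ — this is the divisibility condition on $p$. Finally, minimality of $k$ in the reduced form forces $N\nmid p$ whenever $k\ge1$, and since $N$ divides $\pm N$ this yields $p\notin\{-N,N\}$.

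The step requiring the most care is the first one: the passage from "additive map" to "multiplication by the scalar $\sigma(1)$". This is exactly the rigidity of rank-one torsion-free groups, and it rests on two structural facts about $\Q_N$ — that it is torsion free, so that division by $N^r$ is unique when it exists, and that it is a ring, so that $a\cdot(1/N^r)$ is available as the value of $\sigma(1/N^r)$. Once this rigidity is in hand, identifying the units of $\Q_N$ and reading off the divisibility and normalization constraints on $p$ is routine.
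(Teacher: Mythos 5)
Your core argument is sound and in fact more detailed than the paper's own proof. The paper argues directly: it writes $\sigma(1)=\frac{pq}{N^k}$ in reduced form with $q$ the part of the numerator coprime to $N$, uses surjectivity to produce $x$ with $\sigma(x)=\frac{p}{N^k}$, deduces $qx=1$ and hence $\frac{1}{q}\in\Q_N$, which forces $q=1$; the ``consequently'' clause is left implicit. You reach the same point by first establishing that $\sigma$ is multiplication by $\sigma(1)$ (rank-one rigidity, correctly justified via torsion-freeness and the ring structure) and that $\sigma(1)$ is therefore a unit of $\Z[1/N]$. This is a cleaner packaging of the same divisibility obstruction, and it makes the ``consequently'' clause automatic rather than asserted.

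The gap is in your last two sentences. You correctly characterize the units of $\Z[1/N]$ as the elements $\frac{p}{N^k}$ with $p\neq 0$ and every prime factor of $p$ dividing $N$, but you then call this ``the divisibility condition on $p$,'' silently upgrading it to $p\mid N$. These are not equivalent: for $N=6$ the integer $4$ has all its prime factors among those of $6$ yet $4\nmid 6$, and multiplication by $4$ is an automorphism of $\Q_6$ whose value at $1$ admits no representation $\frac{p}{6^k}$ with $p\mid 6$. Likewise your exclusion of $p\in\{-N,N\}$ via minimality of $k$ only applies when $k\geq 1$; when $k=0$ (e.g.\ $\sigma=$ multiplication by $N$ itself, so $\sigma(1)=N$) nothing rules out $p=N$. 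No argument can close these gaps, because the lemma as literally stated is too strong: the correct conclusion, which both your proof and the paper's actually establish, is only that the numerator of $\sigma(1)$ in reduced form has all its prime factors among those of $N$ --- equivalently, that $\sigma(1)$ is a unit of $\Z[1/N]$. So the defect here lies in the statement being proved rather than in your mechanism; you should simply stop at the unit characterization instead of asserting $p\mid N$ and $p\notin\{-N,N\}$.
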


\begin{proof}
Let us write $\sigma(1) = \frac{pq}{N^k}$ in its reduced form, with $q$ relatively prime with $N$ and nonnegative. Note that as $\sigma$ is an isomorphism, $pq\not=0$ and moreover, there exists $x\in\Q_N$ such that $\sigma(x)=\frac{p}{N^k}$ and we must have $qx = 1$. This contradicts the relative primality of $N$ and $q$. 
\end{proof}

We now obtain the main result of our paper. We fully characterize the isomorphism classes of noncommutative solenoids based on the multipliers of adic rationals.

\begin{theorem}\label{classification}
Let $N,M\in\N$ with $N>1$ and $M>1$. Let $\alpha\in\Xi_N$ and $\beta\in\Xi_M$. The following assertions are equivalent:
\begin{enumerate}
\item The C*-algebras $\algebra_\alpha$ and $\algebra_\beta$ are *-isomorphic,
\item The integers $N$ and $M$ have the same set of prime factor. Let $R$ be the the greatest common divisor of $N$ and $M$, and let $\mu = \frac{N}{R}$ and $\nu=\frac{M}{R}$. Set $\alpha_n'=\mu^n \alpha_n \mod 1$ and $\beta_n'=\nu^n \beta_n \mod 1$ for all $n\in\N$ and note $\alpha',\beta'\in\Xi_R$. There exists $\Lambda\in\primeseq$ and $\gamma\in \Xi_\Lambda$ such that both $\alpha'$ and $\gamma$ have a common subsequence, and $\beta'$ or $-\beta'=(1-\beta_n')_{n\in\N}$  has a common subsequence with $\gamma$. Moreover, $\{\Lambda_n : n\in \N \}$ is the set of prime factors of $R$.
\end{enumerate}
\end{theorem}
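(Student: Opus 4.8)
The plan is to treat the $K$-theory computed in Theorem~(\ref{Ktheory}), together with the canonical trace of Theorem~(\ref{traces}), as the governing invariant, and to exploit the inductive-limit presentation of Theorem~(\ref{ATorus}); this mirrors the $K$-theoretic strategy of \cite{Yin86} for rational rotation algebras. I would prove the two implications separately, using functoriality of $K$-theory for necessity and an explicit inductive-limit identification for sufficiency.

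\textbf{Necessity $(1)\Rightarrow(2)$.} A *-isomorphism $\Phi:\algebra_\alpha\to\algebra_\beta$ induces group isomorphisms $K_j(\Phi)$. Since $K_1(\algebra_\alpha)=\Q_N^2$ and $K_1(\algebra_\beta)=\Q_M^2$, I first deduce $\Q_N^2\cong\Q_M^2$, hence $\Q_N\cong\Q_M$; by the analysis of isomorphisms of $N$-adic rationals behind Lemma~(\ref{NadicIso}) this happens exactly when $N$ and $M$ have the same prime factors, so that $\Q_N=\Q_R=\Q_M$ for $R=\gcd(N,M)$. Because the trace is canonical---unique in the simple case by Theorem~(\ref{traces}), and in every case lifting to the single state (\ref{K-traces}) of $K_0$---the map $K_0(\Phi)$ intertwines the two trace maps $K_0(\tau)$. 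Consequently the image subgroups $\Z+\sum_{k}\Z\alpha_k$ and $\Z+\sum_k\Z\beta_k$ of $\R$ coincide, and each trace value $\alpha_k$ must equal some value $z+p\beta_j$; the complementary-projection ambiguity $\theta\leftrightarrow 1-\theta$ in $K_0$ is precisely what produces the alternative between $\beta'$ and $-\beta'$. After the normalization $\alpha_n'=\mu^n\alpha_n$, $\beta_n'=\nu^n\beta_n$, which places both sequences in $\Xi_R$, these matchings of trace values are reorganized into a common subsequence with a suitably interleaved $\gamma\in\Xi_\Lambda$ whose governing prime sequence $\Lambda$ ranges over the primes of $R$.

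\textbf{Sufficiency $(2)\Rightarrow(1)$.} The reindexing $\alpha\mapsto\alpha'$ identifies the cocycle $\Psi_\alpha$ over $\Q_N$ with $\Psi_{\alpha'}$ over $\Q_R=\Q_N$, so $\algebra_\alpha\cong\algebra_{\alpha'}$, and likewise $\algebra_\beta\cong\algebra_{\beta'}$. If $\alpha'$ and $\gamma$ share a subsequence, then by Theorem~(\ref{ATorus}) both $\algebra_{\alpha'}$ and $\algebra_\gamma$ are inductive limits of rotation algebras along towers that agree on a cofinal set of indices; since an inductive limit depends only on a cofinal subsystem, $\algebra_{\alpha'}\cong\algebra_\gamma$, and similarly $\algebra_\gamma\cong\algebra_{\beta'}$ or $\algebra_\gamma\cong\algebra_{-\beta'}$. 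Finally the flip $U_\theta\mapsto U_\theta^{-1}$, $V_\theta\mapsto V_\theta$ implements $A_\theta\cong A_{1-\theta}$ and hence $\algebra_{\beta'}\cong\algebra_{-\beta'}$, absorbing the sign. Composing these isomorphisms yields $\algebra_\alpha\cong\algebra_\beta$.

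\textbf{Main obstacle.} The difficult step is the passage, inside necessity, from the bare isomorphism of the pair $(K_0,K_0(\tau))$ to the concrete combinatorial statement that the normalized sequences share a common subsequence with some $\gamma\in\Xi_\Lambda$. Because $\mathscr{K}_\alpha$ records $\alpha$ only through its extension class in $\mathrm{Ext}(\Q_R,\Z)$ together with the trace values $\alpha_k=(\alpha_0+J_k^\alpha)/N^k$, I expect to need a Diophantine analysis---similar in spirit to the symmetrizer computation of Theorem~(\ref{symmetrizer})---showing that two $\Xi_R$-coherent sequences generating the same subgroup of $\R$ must interleave $\Xi_\Lambda$-coherently. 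The simple irrational case can be shortened by invoking completeness of the Elliott invariant for real-rank-zero $\mathrm{AT}$-algebras \cite{Elliott93b}; the nonsimple periodic case is best handled through the matrix-bundle picture of Theorem~(\ref{rational-nonsimple}), while the aperiodic rational case---whose building blocks are not of real rank zero---relies on the direct inductive-limit matching above and is the most delicate.
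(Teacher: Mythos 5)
Your sufficiency direction is essentially the paper's argument: reindex $\alpha\mapsto\alpha'$ into $\Xi_R$ (checking $\Psi_{\alpha'}=\Psi_\alpha$ so the algebra is unchanged), pass through $\algebra_\gamma$ via cofinality of the inductive systems of Theorem~(\ref{ATorus}), and absorb the sign with $A_\theta\cong A_{-\theta}$. The necessity direction, however, stops exactly where the theorem's content lies, and you say so yourself: you get from the trace-intertwining only that the subgroups $\Z+\sum_k\Z\alpha_k$ and $\Z+\sum_k\Z\beta_k$ of $\R$ coincide, so that each $\alpha_k$ equals \emph{some} $z+p\beta_j$ with $z,p,j$ a priori depending on $k$. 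That statement is too weak to produce the common subsequence with a single $\gamma\in\Xi_\Lambda$; one needs a \emph{uniform} $p$ and a \emph{uniform} shift $k$ with $\alpha'_n\equiv p\,\beta'_{n+k}\bmod\Z$ for all $n$, and nothing in your argument forces the matchings to be index-coherent.

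The gap is real but it is closed by a more elementary device than the Diophantine analysis you anticipate. The paper uses the quotient maps $\pi_\alpha:\mathscr{K}_\alpha\to\Q_N$, $\pi_\beta:\mathscr{K}_\beta\to\Q_M$ (killing the $\Z$-summand, which $K_0(\theta)$ must preserve since it fixes $[1]$) to obtain an induced group isomorphism $f:\Q_N\to\Q_M$; this already forces $N$ and $M$ to have the same prime factors without appealing to $K_1$. Lemma~(\ref{NadicIso}) then pins $f$ down completely: $f(1)=p/R^k$ with $p\mid R$, hence $f(1/R^n)=p/R^{n+k}$ for \emph{all} $n$. Applying the trace formula~(\ref{K-traces}) to the image of the specific generator $\bigl(\tfrac{J^{\alpha'}_n}{R^n},\tfrac{1}{R^n}\bigr)$, which must be of the form $\bigl(p_n+\tfrac{pJ^{\beta'}_{n+k}}{R^{n+k}},\tfrac{p}{R^{n+k}}\bigr)$, yields the uniform congruence $\alpha'_n=p\beta'_{n+k}+p_n$ directly. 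The remaining work is the explicit construction of $\Lambda$ (a period-$\Omega(R)$ sequence arranged so that a tail-block of each period multiplies to $p$) and of $\gamma$ as the unique extension of $\beta'$ to $\Xi_\Lambda$, so that $p\beta'_n=\gamma_{n\Omega(R)-\Omega(p)}$; then $\mathrm{sign}(p)\alpha'$ is literally a subsequence of $\gamma$. Note also that the paper's argument is uniform across the three classes of solenoids --- your proposed case split (Elliott classification for the irrational case, the bundle picture for the periodic case) is unnecessary for this theorem, since only functoriality of $(K_0,[1],K_0(\tau))$ is used, and that holds regardless of simplicity.
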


\begin{proof}
Assume that there exists $\Lambda\in\primeseq$ and $\gamma\in\Xi_\Lambda$ such that $\alpha$ and $\beta$ have subsequences which are also subsequences of $\gamma$. Then a standard intertwining argument shows that $\algebra_\alpha$ and $\algebra_\beta$ are *-isomorphic to $\algebra_\gamma$. Moreover, for any irrational rotation algebra $A_\theta$, we have that $A_\theta$ is *-isomorphic to $A_{-\theta}$. Hence, $\algebra_\beta$ and $\algebra_{-\beta}$ are *-isomorphic as well. 

Now, let $N=\mu R$ and assume the set of prime factors in $\mu$ is a subset of the set of prime factors of $N$. Set $\alpha'_n = \mu^n \alpha_n$ for all $n\in\N$. 

First, it is straightforward to show that if $N$ and $R$ have the same set of prime factors, then $\Q_N$ and $Q_R$ are isomorphic.

Second, for all $n\in\N$ we have:
\begin{eqnarray*}
R \alpha'_{n+1} &\equiv& R \mu^{n+1} \alpha_{n+1} \mod \Z\\
&\equiv&  \mu^n N\alpha_{n+1} \mod \Z \\
&\equiv& \mu^n\alpha_{n} \mod \Z \\
&\equiv& \alpha'_n \mod \Z\text{.}
\end{eqnarray*}
Hence $\alpha'\in\Xi_R$.

Third, given $\frac{p_j}{R^{k_j}}=\frac{p_j\mu^{k_j}}{N^{k_j}}\in\Q_R$ for $j=1,2,3,4$, we have:
\begin{eqnarray*}
& &\Psi_{\alpha'}\left(\left(\frac{p_1}{R^{k_1}},\frac{p_2}{R^{k_2}}\right),\left(\frac{p_3}{R^{k_3}},\frac{p_4}{R^{k_4}} \right) \right) =\\
&= & \exp\left(2i\pi \left(\alpha'_{k_1+k_4}p_1p_4 \right)\right)\\
&= & \exp\left(2i\pi \left(\alpha_{k_1+k_4}(\mu^{k_1}p_1 \mu^{k_4}p_4 \right)\right)\\
&= & \Psi_\alpha\left(\left(\frac{p_1\mu^{k_1}}{N^{k_1}},\frac{p_2\mu^{k_2}}{N^{k_2}}\right),\left(\frac{p_3\mu^{k_3}}{N^{k_3}},\frac{p_4\mu^{k_4}}{N^{k_4}} \right) \right)\text{.}
\end{eqnarray*}
Hence, $\Psi_\alpha=\Psi_\alpha'$. Consequently, $\algebra_{\alpha'}=\algebra_{\alpha}$. This concludes the proof that (2) implies (1).

\bigskip Conversely, let $\theta : \algebra_\alpha \rightarrow \algebra_\beta$ be a *-isomorphism.  We shall use the notations introduced in Theorem (\ref{Ktheory}). If $\tau$ is a tracial state of $\algebra_\beta$ then $\tau\circ \theta$ is a tracial state on $\algebra_\alpha$. Denote, respectively, by $\tau_\alpha$ and $\tau_\beta$ the lift of a tracial state of $\algebra_\alpha$ and $\algebra_\beta$, and note that by Theorem (\ref{traces}), the choices of tracial state is irrelevant.

By functoriality of $K$-theory, we obtain the following commutative diagram:
\begin{equation}\label{commutative-K-diagram}
\xymatrix{
{K_0(\algebra_\alpha)} \ar[rr]^{K_0(\theta)}  \ar[dr]_{\tau_\alpha} & & {K_0(\algebra_\beta)} \ar[dl]^{\tau_\beta} \\
   & {\R} & 
}
\end{equation}
where $K_0(\theta)$ is the group isomorphism induced by $\theta$. To ease notations, let us write $\sigma = K_0(\theta)$.

\bigskip Our first observation is that $\tau_\beta\circ\sigma(1,0) = \tau_\alpha(1,0) = 1$, which implies that $\sigma(1,0)=(1,0)$. 

Let $\pi_\beta : \mathscr{K}_\beta \rightarrow \Q_M$ be defined by $\pi_\beta\left(z+\frac{pJ_k^\beta}{M^k},\frac{p}{M^k}\right) = \frac{p}{M^k}$. It is easily checked that $\pi_\beta$ is a group epimorphism. Moreover, $\ker\pi_\beta = \{(z,0):z\in\Z\}$. Consequently, if $z,z'\in\Z$, since $\sigma\left(z+\frac{pJ_k^\alpha}{N^k},\frac{p}{N^k}\right) = \sigma(z,0)+\sigma\left(\frac{pJ_k^\alpha}{N^k},\frac{p}{N^k}\right)$, we observe that:
\[
\pi_\beta\left(\sigma\left(z+\frac{pJ_k^\alpha}{N^k},\frac{p}{N^k}\right)-\sigma\left(z'+\frac{pJ_k^\alpha}{N^k},\frac{p}{N^k}\right)\right) = 0\text{.}
\]
Consequently, we have the following commuting diagram:
\[
\xymatrix{
{\mathscr{K}_\alpha} \ar[r]^{\sigma}\ar[d]_{\pi_\alpha} & \mathscr{K}_\beta\ar[d]^{\pi_\beta} \\
{\Q_N} \ar[r]^{f} & {\Q_M}\text{.}
}
\]
with $f:\Q_N\rightarrow \Q_M$ defined by setting $f\left(\frac{p}{N^k}\right) = \pi_\beta\circ\sigma\left(\frac{pJ_k^\alpha}{N^k},\frac{p}{N^k}\right)$. In particular, $f$ is a group isomorphism, so the set of prime factors of $N$ and $M$ are the same and $\Q_N=\Q_M$. As we showed in the first half of this proof, and using the definition of our Theorem, $\alpha',\beta'\in\Xi_R$ and $\Q_N=\Q_R$ where $R$ is the greatest common divisor of $N,M$ and $\algebra_{\alpha'} = \algebra_{\alpha}$ while $\algebra_{\beta'}=\algebra_{\beta}$. We shall henceforth work within $\Q_R$ with $\alpha'$ and $\beta'$.

Let $p\in\Z,k\in\N$ be defined so that $f(1)=\frac{p}{R^k}$ and $\frac{p}{R^k}$ is in reduced form, with $p \mid R$ and $p\not\in\{-R,R\}$ by Lemma (\ref{NadicIso}) . Since $f$ is an isomorphism, we have $f\left(\frac{1}{R^n}\right) = \frac{p}{R^{k+n}}$ for all $n\in\N$. Using the notation $\Omega(R)$ for the number of prime factors of $R$, let $\Lambda\in\primeseq$ be defined as a periodic sequence of period $\Omega(R)$ such that $\Lambda_{\Omega(R)-1-j}=\Lambda(p)_{\Omega(R)-1-j}$ for $j=0,\ldots,\Omega(p)-1$ and $\pi_{\Omega(R)}(\Lambda)=R$. Any of the $(\Omega(R)-\Omega(p))!$ possible choices of order for the first $\Omega(R)-\Omega(p)$ values of $\Lambda$ can be used, and we assume we pick one in the rest of this proof. We can visualize $\Lambda$ as:
\[
\Lambda = \left( \underbrace{\Lambda_0, \Lambda_1,\cdots,\overbrace{\Lambda_{\Omega(R)-\Omega(p)},\cdots,\Lambda_{\Omega(R)-1}}^{\text{product $=p$}} }_{\text{product $=R$}}, \underbrace{\Lambda_{\Omega(R)},\cdots,\Lambda_{2\Omega(R)-1}}_{\text{equal to previous $\Omega(R)$ terms}}, \cdots \right)
\]

Let $\gamma$ be the (unique) extensions of $\beta'$ to $\Xi_\Lambda$. Thus $\gamma_{\Omega(R)n} = \beta'_n$ for all $n\in\N$. Now, for any $n\in\N$, there exists $p_n\in\Z$ such that \[ \sigma\left(\frac{J_n^{\alpha'}}{R^n},\frac{1}{R^n}\right) = \left(p_n+\frac{p J_{n+k}^{\beta'}}{R^{n+k}},\frac{p}{R^{n+k}}\right)\text{.} \]
Using the computation of the traces on $K_0$ in Theorem (\ref{Ktheory}) and the commutative diagram (\ref{commutative-K-diagram}), and noting that if $r=\Omega(p)$ then $p\beta_n' = p\gamma_{n\Omega(R)} = \gamma_{n\Omega(R)-r}$ by definition of $p$, $\Lambda$ and $\gamma$, we thus have:
\[
\alpha_n' = p \beta_{n+k}' + p_n \equiv \mathrm{sign}(p) \gamma_{(n+k)\Omega(N)-r} \mod \Z \text{.}
\]

Thus, $\mathrm{sign}(p)\alpha'$ is a subsequence of $\gamma$, and $\beta$ is a subsequence of $\gamma$ by construction. This concludes the proof of (1) implies (2).
\end{proof}

\begin{corollary}
Let $N,M$ be prime numbers. Let $\alpha\in\Xi_N$ and $\beta\in\Xi_M$. Then the following assertions are equivalent:
\begin{enumerate}
 \item The noncommutative solenoids $\algebra_\alpha$ and $\algebra_\beta$ are *-isomorphic,
 \item We have $N=M$ and one of the sequence $\alpha$ or $\beta$ is a truncated subsequence of the other.
\end{enumerate}
\end{corollary}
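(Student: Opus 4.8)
The plan is to deduce this corollary from Theorem (\ref{classification}) by watching the auxiliary data $R,\mu,\nu,\Lambda,\gamma$ degenerate when $N$ and $M$ are prime. All of the content lies in $(1)\Rightarrow(2)$; the converse is a direct verification of hypothesis (2) of the classification theorem.

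For $(2)\Rightarrow(1)$ I would assume $N=M$ and, without loss of generality, that $\alpha$ is a truncated subsequence of $\beta$. Taking $\Lambda=(N)_{n\in\N}$ (so that $\Xi_\Lambda=\Xi_N$ and $\{\Lambda_n\}$ is the set of prime factors of $R=N$) and $\gamma=\beta$, both $\alpha$ and $\beta$ are truncated, hence common, subsequences of $\gamma$. This is exactly condition (2) of Theorem (\ref{classification}) for the pair $(\alpha,\beta)$, so that theorem delivers the $*$-isomorphism $\algebra_\alpha\cong\algebra_\beta$.

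For $(1)\Rightarrow(2)$ I would apply Theorem (\ref{classification}) and track the primality hypotheses through its conclusion (2). First, $N$ and $M$ must share the same prime factors, and since each is prime this forces $N=M$; then $R=\gcd(N,M)=N$, whence $\mu=\nu=1$ and so $\alpha'=\alpha$, $\beta'=\beta$. Next, $\Omega(R)=1$, so the sequence $\Lambda\in\primeseq$ supplied by the theorem has minimal period $1$ and range $\{N\}$: it is the constant sequence $(N)_{n\in\N}$, and $\Xi_\Lambda=\Xi_N$. Finally I would specialize the explicit subsequence relation obtained in the proof of Theorem (\ref{classification}), in which the relevant sequence agrees with $\gamma$ along the arithmetic progression of indices $(n+\cdot)\Omega(R)-r$; with $\Omega(R)=1$ this progression has step $1$, so ``subsequence of $\gamma$'' collapses to ``tail of $\gamma$.'' Thus there is one $\gamma\in\Xi_N$ of which $\alpha$ is a tail, say $\alpha_n=\gamma_{n+a}$, and of which $\beta$ or $-\beta=(1-\beta_n)_{n\in\N}$ is a tail, say $(\pm\beta)_n=\gamma_{n+b}$.

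It remains to combine these tails and to discharge the sign, which is the only delicate point. If $a\le b$ then $(\pm\beta)_n=\gamma_{n+b}=\alpha_{n+(b-a)}$, so $\pm\beta$ is a truncated subsequence of $\alpha$; if $a\ge b$ then symmetrically $\alpha$ is a truncated subsequence of $\pm\beta$. When the sign is $+$ this is precisely assertion (2). When it is $-$, the isomorphism $A_\theta\cong A_{-\theta}$ gives $\algebra_\beta\cong\algebra_{-\beta}$, so I would simply rerun the argument with $-\beta\in\Xi_M$ replacing $\beta$ throughout, returning to the $+$ case and again yielding (2). I expect the main obstacle to be organizational rather than computational: confirming that primality genuinely collapses $\Lambda$ to a constant and the theorem's generic ``subsequence'' to an honest tail, and recognizing that the leftover orientation $\beta\leftrightarrow-\beta$ is absorbed without cost by the canonical identification $\algebra_{-\beta}\cong\algebra_\beta$.
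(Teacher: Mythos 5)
Your route is the same as the paper's: specialize Theorem (\ref{classification}) to prime $N,M$, so that $R=N$, $\mu=\nu=1$, $\alpha'=\alpha$, $\beta'=\beta$, the sequence $\Lambda$ is the constant sequence $(N)_{n\in\N}$ with $\Xi_\Lambda=\Xi_N$, and a common subsequence with $\gamma\in\Xi_N$ is forced to be a tail $\gamma_{n+\cdot}$; comparing the two offsets then makes one of the two sequences a truncated subsequence of the other. You are in fact more explicit than the paper, which simply asserts that both $\alpha$ and $\beta$ are subsequences of a single $\gamma\in\Xi_N$ and compares offsets.

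The one step that does not close is your treatment of the sign, which is the very point the paper's own proof silently skips. Theorem (\ref{classification}) only guarantees that $\beta$ \emph{or} $-\beta$ shares a tail with $\gamma$, and your remedy --- rerun the argument with $-\beta$ in place of $\beta$, using $\algebra_\beta\cong\algebra_{-\beta}$ --- is circular: the condition in the classification theorem is symmetric under $\beta\mapsto-\beta$, so the rerun returns exactly the same dichotomy and you cannot force the ``$+$'' branch. What the argument honestly yields is that one of $\alpha$ and $\pm\beta$ is a truncated subsequence of the other, which is weaker than assertion (2) as stated (for generic $\beta$, neither $-\beta$ nor $\beta$ is a truncated subsequence of the other). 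To get (2) literally one would need an additional argument ruling out the case where only $-\beta$, and not $\beta$, is a tail of $\gamma$; neither your proposal nor the paper supplies one, so you have correctly located a real subtlety but not discharged it.
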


\begin{proof}
If $N=M$ and $\alpha$ is a truncated subsequence of $\beta$ then $\algebra_\alpha$ and $\alpha_\beta$ are trivially *-isomorphic. The same holds if $\beta$ is a truncated subsequence of $\alpha$.

Conversely, assume $\algebra_\alpha$ and $\algebra_\beta$ are *-isomorphic. Then as $N$ and $M$ are prime, so by Theorem (\ref{classification}) we have $N=M$. Moreover, there exists a sequence $\gamma \in \Xi_N$ such that both $\alpha$ and $\beta$ are subsequences of $\gamma$. Now, since $\alpha,\beta,\gamma\in\Xi_N$, this implies that for some $n,n'\in\N$ we have $\alpha_{j} = \gamma_{n+j}$ and $\beta_{j}=\gamma_{n'+j}$ for all $j\in\N$. This shows that either $\alpha$ is a truncated subsequence of $\beta$ (if $n'\leq n$) or $\beta$ is a truncated subsequence of $\alpha$.
\end{proof}

Theorem (\ref{classification}) relies on the invariant $\algebra_\alpha \; (\alpha\in\Xi_N) \mapsto \left(K_0(\algebra_\alpha),\tau_\alpha\right)$ where $\tau_\alpha$ is the unique map given by lifting any tracial state of $\algebra_\alpha$ to its $K_0$ group. We would like to add an observation regarding the information on noncommutative solenoids one can read from the $K_0$ group seen as an Abelian extension of $\Z$ by $\Q_N$ rather than as a group alone. We fix $N\in\N,N>1$.

First, note that given $\alpha\in\Xi_N$, the pair $(K_0(\algebra_\alpha),[1])$, where $[1]$ is the $K_0$-class of the identity of $\algebra_\alpha$, we can construct an Abelian extension of $\Z$ by $\Q_N$ by defining $\iota : z \in \Z \mapsto z[1]$ and noting that $K_0(\algebra_\alpha)/i(\Z)$ is isomorphic to $\Q_N$.

Now, consider $\alpha,\beta\in\Xi_N$ such that there exists a (unital) *-isomorphism $\psi:\algebra_\alpha\rightarrow \algebra_\beta$. Then the following diagram commutes:
\begin{equation}\label{WeakEquivalence}
\xymatrix{
0 \ar[r] & {\Z} \ar[r]^\iota \ar@{=}[d] & {\mathscr{Q}_\alpha=K_0(\algebra_\alpha)} \ar[r] \ar^{K_0(\psi)}[d] & {\Q_N} \ar[r] \ar[d]^\sigma & 0\\
0 \ar[r] & {\Z} \ar[r]^\iota & {\mathscr{Q}_\beta=K_0(\algebra_\beta)} \ar[r] & {\Q_N} \ar[r] & 0
}
\end{equation}
since $\psi$ is unital, and where the arrow $\sigma$ is defined and proven to be an isomorphism by standard diagram chasing arguments.

Conversely, we say that two Abelian extensions of $\Z$ by $\Q_N$ such that there exists a commutative diagram of the form Diagram (\ref{WeakEquivalence})  are \emph{weakly equivalent} (note that Theorem (\ref{QadicExt}) shows that any such extension can be obtained using the $K$-theory of noncommutative solenoids). Note that weakly equivalent extensions are isomorphic but not necessarily equivalent as extensions. The difference is that we allow for an automorphism $\sigma$ of $\Q_N$. This reflects, informally, that according to Theorem (\ref{classification}), the noncommutative solenoid $\algebra_\alpha$ only partially determines $\alpha$.

Now, given two equivalent Abelian extensions of $\Z$ by $\Q_N$, if one is weakly equivalent to some other extension, then so is the other. Hence, weakly equivalence defines an equivalence relation $\equiv$ on $\mathrm{Ext}(\Q_N,\Z)$ such that if $\alpha,\beta\in\Xi_N$ give rise to *-isomorphic noncommutative solenoids, then the associated Abelian extensions of cocycle $\xi_\alpha$ and $\xi_\alpha$ (see Lemma (\ref{QgroupCocycle})) in $\mathrm{Ext}(\Q_N,\Z)$ are equivalent for $\equiv$. According to Theorem (\ref{QadicExt}), the group $\mathrm{Ext}(\Q_N,\Z)$ is isomorphic to the quotient $\Z_N/\Z$ of the group $\Z_N$ of $N$-adic integers by the group $\Z$ of integers. Using Theorem (\ref{classification}), and for $N$ prime, we easily see that the relation induced by $\equiv$ on $\Z_N/\Z$ is given by:
\[
[J_n]_{n\in\N} \equiv [R_n]_{n\in\N} \iff \exists k \in \N \;\; \left(N^k J - R \in \Z \right) \vee \left(N^k R - J \in \Z\right)
\]
where $[J]$ is the class in $\Z_N / \Z$ of $J\in\Z_N$ and $N^k J$ is the sequence $(N^kJ_n)_{n\in\N}$ for any $J\in\Z_N$.

Hence, in conclusion, for a given $\alpha\in\Xi_N$, the data $\left(K_0(\algebra_\alpha),[1],\alpha_0\right)$ where $\alpha_0$ is the trace of any Rieffel-Powers projection in $\algebra_\alpha$, is a complete invariant for $\algebra_\alpha$. Indeed, $(K_0(\algebra_\alpha),[1])$ determines a cocycle $\xi_J$ in $H^2(\Q_N,\Z)$, up to the equivalence $\equiv$, and we can recover $\alpha$ up to a shift using the value $\alpha_0$.

\bigskip Our classification result has the following interesting dynamical application. The following is easily seen:

\begin{corollary}
Let $N,M\in\N$ with $N,M>1$. Let $\alpha\in\Xi_N$ and $\beta\in\Xi_M$. If any of the following assertion holds:
\begin{enumerate}
\item $N$ and $M$ have distinct set of prime factors,
\item For any $\Lambda\in\primeseq$ such that $\{ \Lambda_n : n\in\N \}$ is the set of prime factors of $N$, there is no $\gamma\in\Xi_\Lambda$ such that $\alpha'$ and $\beta'$ are subsequences of $\gamma$ and no $\gamma\in\Xi_\Lambda$ such that $\alpha'$ and $-\beta'$ are subsequences of $\gamma$, where $\alpha',\beta'\in\Xi_R$ are defined as in Theorem (\ref{classification}),
\end{enumerate}
then the actions $\theta^\alpha$ and $\theta^\beta$ of, respectively, $\Q_N$ on $\solenoid_N$ and $\Q_M$ on $\solenoid_M$ are not topologically conjugate.
\end{corollary}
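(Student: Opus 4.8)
The plan is to deduce this statement as the contrapositive of Theorem (\ref{classification}), using the fact that a topological conjugacy between the two dynamical systems descends to a $*$-isomorphism of their crossed product C*-algebras, which Proposition (\ref{crossed-product}) identifies with the noncommutative solenoids $\algebra_\alpha$ and $\algebra_\beta$.

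First I would make precise the notion of conjugacy in play: a topological conjugacy between $\theta^\alpha$ and $\theta^\beta$ consists of a group isomorphism $\phi:\Q_N\rightarrow\Q_M$ together with a homeomorphism $h:\solenoid_N\rightarrow\solenoid_M$ satisfying $h\circ\theta^\alpha_g = \theta^\beta_{\phi(g)}\circ h$ for all $g\in\Q_N$. Such a pair $(\phi,h)$ is exactly an isomorphism of the two transformation group C*-dynamical systems, and by the standard functoriality of the crossed-product construction it induces a $*$-isomorphism
\[
C(\solenoid_N)\rtimes_{\theta^\alpha}\Q_N \;\cong\; C(\solenoid_M)\rtimes_{\theta^\beta}\Q_M\text{.}
\]
Combining this with the two $*$-isomorphisms furnished by Proposition (\ref{crossed-product}), a conjugacy of $\theta^\alpha$ and $\theta^\beta$ forces $\algebra_\alpha$ and $\algebra_\beta$ to be $*$-isomorphic.

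The heart of the argument is then simply to observe that assertions (1) and (2) of this corollary are precisely the negations of the two halves of assertion (2) of Theorem (\ref{classification}): assertion (1) negates the requirement that $N$ and $M$ share the same set of prime factors, while assertion (2) negates the existence of $\Lambda\in\primeseq$ and $\gamma\in\Xi_\Lambda$ having common subsequences with $\alpha'$ and with $\beta'$ (or $-\beta'$), the sequences $\alpha',\beta'$ being those built in Theorem (\ref{classification}). Hence, if either (1) or (2) holds, then assertion (2) of Theorem (\ref{classification}) fails, so by that theorem $\algebra_\alpha$ and $\algebra_\beta$ are not $*$-isomorphic; by the previous paragraph the actions cannot be topologically conjugate.

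The only point requiring care is the passage from conjugacy to $*$-isomorphism of crossed products across two a priori distinct acting groups $\Q_N$ and $\Q_M$, which is where the group isomorphism $\phi$ is used. It is worth noting that assertion (1) already rules out conjugacy at the level of the acting groups, since $\Q_N\cong\Q_M$ as abstract groups exactly when $N$ and $M$ have the same set of prime factors; nonetheless routing the entire argument through Theorem (\ref{classification}) handles both assertions uniformly. I expect no substantial obstacle beyond stating the conjugacy-to-crossed-product functoriality carefully, which is why the corollary is asserted to be easily seen.
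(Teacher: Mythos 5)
Your proposal is correct and follows the route the paper clearly intends (the paper omits the proof, stating only that the corollary is "easily seen"): a topological conjugacy, which here must include a group isomorphism $\Q_N\cong\Q_M$, induces a $*$-isomorphism of the crossed products, which Proposition (\ref{crossed-product}) identifies with $\algebra_\alpha$ and $\algebra_\beta$, and either hypothesis negates condition (2) of Theorem (\ref{classification}). Nothing further is needed.
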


\bibliographystyle{amsplain}
\bibliography{../thesis.bib}
\vfill
\end{document}